\theoremstyle{definition}
\theoremstyle{plain}
\date{}
\newtheorem{Thm}{Theorem}[section]
\newtheorem{Prop}[Thm]{Proposition}
\newtheorem{Lemma}[Thm]{Lemma}
\newcommand{\bu}{\bar{u}}
\newcommand{\dt}{\Delta t}
\newcommand{\dx}{\Delta x}
\newcommand{\I}{\mathcal{I}}
\newcommand{\mes}{{\rm meas\,}}
\newcommand{\dis}{\displaystyle}
\newcommand{\norm}{\parallel}
\newcommand{\Z}{{\mathbb Z}}
\newcommand{\T}{{\mathbb T}}
\newcommand{\N}{{\mathbb N}}
\newcommand{\R}{{\mathbb R}}
\def\text#1{\mbox{#1 }}
\DeclareMathOperator{\esssup}{ess \sup}
\title{\bf More on Stochastic and Variational Approach to the Lax-Friedrichs Scheme}
\author{Kohei Soga
\footnote{Department of Pure and Applied Mathematics, Waseda University, Tokyo 169-8555, Japan \qquad\qquad
(kohei-math@toki.waseda.jp).}}
\begin{document}
\maketitle
\begin{abstract}
\noindent A stochastic and variational aspect of the Lax-Friedrichs scheme was applied to hyperbolic scalar conservation laws by Soga [arXiv: 1205.2167v1].
The results for the Lax-Friedrichs scheme are extended here to show its time-global stability, the large-time behavior, and error estimates.
The proofs essentially rely on the calculus of variations in the Lax-Friedrichs scheme and on the theory of viscosity solutions of Hamilton-Jacobi equations corresponding to the hyperbolic scalar conservation laws.
Also provided are basic facts that are useful in the numerical analysis and simulation of the weak Kolmogorov-Arnold-Moser (KAM) theory.
As one application, a finite difference approximation to KAM tori is rigorously treated.
\noindent

\medskip

\noindent{\bf Keywords:} Lax-Friedrichs scheme; scalar conservation law; Hamilton-Jacobi equation; calculus of variations; random walk; weak KAM theory  \medskip

\noindent{\bf AMS subject classifications:}  65M06; 35L65; 49L25; 60G50; 37J50
\end{abstract}
\setcounter{section}{0}
\setcounter{equation}{0}
\section{Introduction}
We investigate the Lax-Friedrichs scheme applied to initial value problems of hyperbolic scalar conservation laws with a constant $c$,
\begin{eqnarray}\label{CL2}
u_t+H(x,t,c+u)_x=0.
\end{eqnarray}
There is a vast literature on the stability and convergence of the scheme.
The standard technique is based on the $L^1$-framework with a priori estimates and the compactness of functions of bounded variation, where mesh-size independent boundedness of both the difference solutions and their total variation must be verified.
Since the Lax-Friedrichs scheme is very simple, details of approximation can be successfully analyzed, particularly in the case of a flux function with the simple form $H(x,t,p)=H(p)$.
We refer to \cite{Crandall-Majda}, \cite{Sabac}, \cite{Tadmor}, and the studies cited therein.
However, in the case of a general flux function depending on both $x$ and $t$, the problem becomes far more difficult and often requires undesirable assumptions.
The results of the general case first appeared in~\cite{Oleinik}, where stability and $L^1$-convergence are proved with a restricted time interval that is determined by the growth of $H(x,t,p)$ with respect to $p$.
In~\cite{Nishida-Soga}, time-global stability and $L^1$-convergence within arbitrary time intervals are proved for a flux function of the form $H(x,t,p)=f(p)+F(x,t)$ in the periodic setting, with many details of the large-time behavior of the Lax-Friedrichs scheme.
Still, it seems very difficult to obtain results similar to those in~\cite{Nishida-Soga} for more general flux functions by the standard approach based on the $L^1$-framework.

Recently, a stochastic and variational approach to the Lax-Friedrichs scheme was announced \cite{Soga2}.
Stability and convergence were proved on the basis of 1) the {\it law of large numbers} in the hyperbolic scaling limit of random walks, and 2) the {\it calculus of variations} in the theory of viscosity solutions of the Hamilton-Jacobi equations with constants $c$ and $h(c )$,
\begin{eqnarray}\label{HJ2}
v_t+H(x,t,c+v_x)=h(c ).
\end{eqnarray}
This is a finite difference version of the stochastic and variational approach to the vanishing viscosity method in~\cite{Fleming}.
Now we briefly review the stochastic and variational approach in~\cite{Soga2}.
Consider initial value problems of the inviscid hyperbolic scalar conservation law and the corresponding Hamilton-Jacobi equation
\begin{eqnarray}\label{CL}
\left\{
\begin{array}{lll}
&\dis u_t+H(x,t,c+u)_x=0\,\,\,\,\mbox{in $\T\times(0,T]$,}\medskip\\
&\dis u(x,0)=u^0(x)\in L^\infty(\T)\,\,\,\,\mbox{on $\T$},\quad\int_\T u^0(x)dx=0,\quad \norm u^0\norm_{L^\infty}\le r ,
\end{array}
\right.
\end{eqnarray}
\begin{eqnarray}\label{HJ}
\left\{
\begin{array}{lll}
&\dis v_t+H(x,t,c+v_x)=h(c)\,\,\,\,\mbox{in $\T\times(0,T]$,}\medskip\\
&v(x,0)=v^0(x)\in Lip(\T)\,\,\,\,\mbox{on $\T$}, \quad\norm v_x^0\norm_{L^\infty}\le r,
\end{array}
\right.
\end{eqnarray}
where $c\in [c_0,c_1]$ is a varying parameter, $\T:=\R/\Z$ is the standard torus, $h(c )$ is a continuous function, and $r>0$ is a constant.
We arbitrarily fix $T$, $r$, and $[c_0,c_1]$.
Note that (\ref{CL}) and (\ref{HJ}) are equivalent in the sense that the entropy solution $u$ or viscosity solution $v$ is derived from the other if $u^0=v^0_x$.
In particular, we have  $u=v_x$ (e.g., see \cite{Bernard}).
Hereinafter we assume that $u^0=v^0_x$.
The flux function $H$ is assumed to satisfy the following:

(A1) $H(x,t,p):\T^2\times\R\to\R$, $C^2$ \quad
(A2) $H_{pp}>0$  \quad
(A3) $\dis \lim_{|p|\to+\infty}\frac{H(x,t,p)}{|p|}=+\infty$.

\noindent From (A1)--(A3) we obtain the Legendre transform $L(x,t,\xi)$ of $H(x,t,\cdot)$, which is given by
 $$L(x,t,\xi)=\sup_{p\in\R}\{\xi p -H(x,t,p)\}$$
and satisfies

(A1)$'$ $L(x,t,\xi):\T^2\times\R\to\R$, $C^2$ \quad
(A2)$'$ $L_{\xi\xi}>0$  \quad
(A3)$'$ $\dis \lim_{|\xi|\to+\infty}\frac{L(x,t,\xi)}{|\xi|}=+\infty$.

\noindent The final assumption is the following:

 (A4) There exists $\alpha>0$ such that $|L_x|\le\alpha(|L|+1)$.

\noindent  Note that (A4) implies completeness of the Euler-Lagrange flow generated by $L$ and Hamiltonian flow generated by $H$.

We discretize the equation in (\ref{CL}) by the Lax-Friedrichs scheme as follows:
\begin{eqnarray}\label{CL-Delta}
 \frac{u^{k+1}_{m+1}-\frac{(u^k_{m}+u^k_{m+2})}{2}}{\dt}
+\frac{H(x_{m+2},t_k,c+u^k_{m+2})-H(x_m,t_k,c+u^k_m)}{2\dx}
=0.
\end{eqnarray}
We also discretize the equation in (\ref{HJ}) by the following scheme:
\begin{eqnarray}\label{HJ-Delta}\frac{ v^{k+1}_{m} - \frac{(v^k_{m-1}+v^k_{m+1})}{2} }{\dt}+H(x_{m},t_k,c+\frac{v^k_{m+1}-v^k_{m-1}}{2\dx})
=h(c).
\end{eqnarray}
Note that (\ref{CL-Delta}) and (\ref{HJ-Delta}) are also equivalent in the sense that $u^k_m$ or $v^k_{m+1}$ is derived from the other.
In particular, we have
$$u^k_m=\frac{v^k_{m+1}-v^k_{m-1}}{2\dx},$$
which is an important relation in this paper.
 In the stochastic and variational approach, the {\it stochastic} comes from the numerical viscosity intrinsic to (\ref{CL-Delta}) and (\ref{HJ-Delta}), while the {\it variational} comes from the variational structures of Hamilton-Jacobi equations.
The stochastic and variational approach in~\cite{Soga2} led to several results:
\begin{enumerate}
 \item[(1)] Stochastic and variational representation formulas (value functions) for  $v^k_{m+1}$ and $u^k_m$ were obtained.
 \item[(2)] Stability of the Lax-Friedrichs scheme up to arbitrary $T>0$ was derived by variational techniques.
 \item[(3)] Pointwise convergence of $u^k_m$ to $u=v_x$ was proved almost everywhere.
In particular, this yielded uniform convergence, except for neighborhoods of shocks with arbitrarily small measure.
 \item[(4)] Uniform convergence of $v^k_{m+1}$ to $v$ with an error $O(\sqrt{\dx})$ was proved from a stochastic viewpoint.
 \item[(5)] Random walks played a role as characteristic curves of the difference equations, which converged to the genuine characteristic curves of (\ref{CL2}) and (\ref{HJ2}).
\end{enumerate}

The purpose of this paper is to show further results for the Lax-Friedrichs scheme on the basis of (1)--(5) under the assumptions (A1)--(A4) with techniques from the theory of viscosity solutions of Hamilton-Jacobi equations.
We refer only to the results for the Lax-Friedrichs scheme, but similar results for other finite difference schemes with numerical viscosity (e.g., the upwind/downwind scheme) are available as well.
The main results are on the following:

1. Time-global stability of the Lax-Friedrichs scheme with a fixed mesh size.

2. Error estimates for entropy solutions.

It is proved that genuine entropy solutions at $t=1$ are uniformly bounded, regardless of the magnitudes of the initial data.
Since the genuine solutions are well approximated by the difference entropy solutions for small mesh sizes, the difference entropy solutions at $t=1$ are also uniformly bounded.
Due to the periodic setting, iteration of the time-1 analysis yields time-global properties.
Combining these facts, we obtain time-global stability of the Lax-Friedrichs scheme.
As a result, we can show that the large-time behavior of the Lax-Friedrichs scheme is such that any solutions associated with each $c$ fall into the time periodic state uniquely determined by each $c$.
This means that for each $c$ we obtain the unique space-time periodic difference entropy solution and the unique (up to a constant) space-time periodic difference viscosity solution.
These approximate the genuine $\Z^2$-periodic entropy (resp. viscosity) solution of (\ref{CL2}) (resp. (\ref{HJ2})).
For the periodic states, we naturally have the notion of the effective Hamiltonian for the difference Hamilton-Jacobi equation (\ref{HJ-Delta}). 
We reveal its properties and prove that it converges to the effective Hamiltonian for the exact equation (\ref{HJ2}) with an error estimate of $O(\sqrt{\dx})$. 

It is known that the optimal estimate of the $L^1$-error between $u^k_m$ and $u$ is $O(\sqrt{\dx})$ in the case of $H(x,t,p)=H(p)$ \cite{Sabac}.
The upper bound $O(\sqrt{\dx})$ is due to properties of functions of bounded variation \cite{Kuzentsov}.
It is not clear whether the result in~\cite{Kuzentsov} is applicable to the case of our general flux functions.
Through a different approach, we obtain an $L^1$-error estimate of $O(\dx^{\frac{1}{4}})$.
This error estimate is based on $O(\sqrt{\dx})$, which arises as the error between random walks and their space-time continuous limit under hyperbolic scaling (i.e., a backward characteristic curve).
For a technical reason, we lose the exponent $1/4$ in the case of the general flux function $H(x,t,p)$.
In addition, we show that if the genuine entropy solution is Lipschitz, then a $C^0$-error estimate of $O(\dx^{\frac{1}{4}})$ is available.

Unlike the case for initial value problems, it is challenging to show convergence of full sequences and  estimate the error for $\Z^2$-periodic entropy (resp. viscosity) solutions  of (\ref{CL2}) (resp. (\ref{HJ2})), because the uniqueness of such genuine $\Z^2$-periodic solutions with respect to $c$ is not valid in general.
However, we can manage the special case in which a genuine $\Z^2$-periodic entropy solution $\bu$ with some $c$ is $C^1$ and the dynamics of its characteristic curves $C^\ast(s):=(q(s)\mod 1,s\mod1)$  are $C^1$-conjugate to the dynamics of the linear flow on $\T^2$ with a Diophantine rotation vector.
Such a solution $\bu$ is known as a {\it KAM} torus in Hamiltonian dynamics (e.g., see \cite{JKM}, \cite{Moser}, \cite{Sevryuk}).
We show a $C^0$-error estimate depending on the Diophantine nature of the rotation vector, which is a rigorous result on finite difference approximation of KAM tori.
Our proof is based on the fact that one orbit of the linear flow on $\T^2$ with a Diophantine rotation vector is ergodic on $\T^2$ and hence so is each $C^\ast(s)$.

Finally, we note that our motivation comes not only from the viewpoint of PDEs in continuum mechanics but also from the recent theory of Lagrangian and Hamiltonian dynamics that is called the Aubry-Mather theory or the weak  KAM theory \cite{Fathi}, \cite{WE}, \cite{JKM}.
Our periodic setting is standard, and $\Z^2$-periodic entropy (resp. viscosity) solutions of  (\ref{CL2}) (resp. (\ref{HJ2})) and the effective Hamiltonian play central roles in the weak KAM theory.
The results of this paper provide basic tools for numerical analysis of the weak KAM theory through finite difference approximation.
We remark that from the standpoint of accuracy it is better to approximate entropy solutions and characteristic curves as well as viscosity solutions, because the central objects in the weak KAM theory, such as KAM tori, Aubry-Mather sets, effective Hamiltonians, and calibrated curves, are obtained from the derivatives of viscosity solutions or entropy solutions.
The ``derivatives'' of numerical viscosity solutions obtained through a scheme that has no relation to entropy solutions are not accurate in general.
Some developments in finite difference approximation methods and numerical simulations for the weak KAM theory are found in \cite{Nishida-Soga}.
However, the results are mathematically restricted by the absence of the stochastic and variational approach to the Lax-Friedrichs scheme.
We also point to \cite{Bessi} and \cite{JKM} for results on smooth approximation methods for the weak KAM theory based on the vanishing viscosity method. In particular, \cite{Bessi} successfully applies the stochastic and variational approach to the vanishing viscosity method given in \cite{Fleming}, where the genuine characteristic curves are approximated by solutions of stochastic ODEs with the standard Brownian motion.

The advantage of our stochastic and variational approach is that structures and properties similar to those of the exact equations (\ref{CL2}) and (\ref{HJ2}) are available in the most common finite difference schemes, which provides much more information on the schemes.
In particular, we can trace genuine characteristic curves by means of random walks.
This enables further development of finite difference approximation methods for the classical and weak KAM theories.
%%%%%%%%%%%%%%%%%%%%%%%
\setcounter{section}{1}
\setcounter{equation}{0}
\section{Preliminary Results}
In this section, we state several important preliminary results.
\subsection{Entropy Solution and Viscosity Solution}
It is well known that the viscosity solution $v$ of (\ref{HJ}) is Lipschitz and is characterized by the calculus of variations.
The value of $v$ at each point $(x,t)\in\T\times(0,T]$, $T\in(0,\infty)$, is given by
\begin{eqnarray}\label{value-func}
v(x,t)=\inf_{\gamma\in AC,\,\,\gamma(t)=x}\left\{ \int^t_0 L^{(c)}(\gamma(s),s,\gamma'(s))ds+v^0(\gamma(0)) \right\}+h(c)t,
\end{eqnarray}
where $AC$ is the family of absolutely continuous curves $\gamma:[0,t]\to\T$ and
$$L^{(c)}(x,t,\xi):=L(x,t,\xi)-c\xi$$
is the Legendre transform of $H(x,t,c+\cdot)$.
We can find a minimizing curve $\gamma^\ast$ of (\ref{value-func}) that is a backward characteristic curve of (\ref{CL2}) and (\ref{HJ2}) as well as a $C^2$-solution of the Euler-Lagrange equation generated by the Lagrangian $L^{(c)}$.
On each minimizing curve, $v$ is differentiable with respect to $x$:
\begin{eqnarray}\label{v_x}
v_x(\gamma^\ast(s),s)=L^{(c)}_\xi(\gamma^\ast(s),s,\gamma^\ast{}'(s)) \mbox{ for $0<s<t$.}
\end{eqnarray}
We say that a point $(x,t)$ is a regular point of $v$, or regular, if $v_x(x,t)$ exists.
Since $v$ is Lipschitz, almost every point is regular.
In particular, if $(x,t)$ is regular, the minimizing curve $\gamma^\ast$ for (\ref{value-func}) is unique and (\ref{v_x}) holds for $s=t$.

Usually, the entropy solution $u$ of (\ref{CL}) is defined as an element of $C^0((0,T];L^1(\T))$.
Here we always take the representative element given by $v_x$, which is still denoted by $u$.
If $(x,t)$ is regular and $\gamma^\ast$ is the unique minimizing curve for $v(x,t)$, the value of the entropy solution $u=v_x$ at the point $(x,t)$ is given by
\begin{eqnarray*}\label{value-func-CL}
u(x,t)=\int^t_0L^{(c)}_x(\gamma^\ast(s),s,\gamma^\ast{}'(s))ds+u^0(\gamma^\ast(0)),
\end{eqnarray*}
where $u^0$ is assumed to be rarefaction-free,
 $$\esssup\displaylimits_{x\neq y}\frac{u^0(x)-u^0(y)}{x-y}\le M\mbox{\,\,\, for some $M>0$ (i.e., a one-sided Lipschitz condition)},$$
or equivalently $v^0$ is semiconcave,
$$v^0(x+h)+v^0(x-h)-2v^0(x)\le Mh^2\mbox{ for all $x,h$.}$$
Otherwise, $u^0(\gamma^\ast(0))$ must be replaced with $L^{(c)}_\xi(\gamma^\ast(0),0,\gamma^\ast{}'(0))$.
In particular, for any $\tau\in[0,t)$ we have
$$u(x,t)=\int^t_\tau L^{(c)}_x(\gamma^\ast(s),s,\gamma^\ast{}'(s))ds+L^{(c)}_\xi(\gamma^\ast(\tau),\tau,\gamma^\ast{}'(\tau)).
$$
For more details see, e.g., \cite{Bernard} or \cite{Cannarsa}.

We introduce the solution operators of (\ref{CL}) and (\ref{HJ}) as follows:
$$\phi^t:L^\infty_{r,0}(\T)\ni u^0\mapsto u(\cdot,t)\in L^\infty(\T), \,\,\,\,\,\psi^t:Lip_{r}(\T)\ni v^0\mapsto v(\cdot,t)\in Lip(\T),$$
where $L^\infty_{r,0}(\T)$ is the set of all functions $u^0\in L^\infty(\T)$  with $\norm u^0\norm_{L^\infty}\le r$ and $\int _\T u^0dx=0$, while $Lip_r(\T)$ is the set of all Lipschitz functions on $\T$ with a Lipschitz constant bounded by $r$.
When we specify the value of $c$, we write $\phi^t(\cdot;c)$, $\psi^t_\Delta(\cdot;c)$, $u^{(c)}$, $v^{(c)}$.

We would like to prove a priori boundedness of $u(x,t)=v_x(x,t)$.
This is closely related to a priori compactness of minimizers for (\ref{value-func}).
We remark that a priori compactness of minimizers plays an important role in the Aubry-Mather theory and the weak KAM theory, and details are known for more general settings (e.g., \cite{Mather}, \cite{Iturriaga}).
The basic assumptions for this are (A1)$'$--(A3)$'$ and completeness of the Euler-Lagrange flow.
Here we adopt (A4), which is stronger than the completeness assumption.
We need this to obtain compactness of minimizers for our stochastic and variational problems, which do not satisfy the Euler-Lagrange equation generated by $L^{(c)}$.
In order to provide a self-contained treatment, we give brief proofs by modifying Section~4.1 of~\cite{Fathi-book}.
%%%%%%%%%%%%%%%%%
\begin{Prop} \label{boundedness}
For each $t\in(0,T]$, there exists a constant $\beta_1(t)>0$ (independent of $r$, $c\in[c_0,c_1]$, and the initial data $v^0, u^0$) for which
$$\norm \phi^t(u^0;c)\norm_{L^\infty}\le\beta_1(t),\quad \norm \psi^t(v^0;c)_x\norm_{L^\infty}\le\beta_1(t).$$
\end{Prop}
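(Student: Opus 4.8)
The plan is to reduce both estimates to a single a priori bound on $|v_x(x,t)|$ at regular points, and then to bound the velocity $\gamma^\ast{}'(t)$ of the associated minimizer uniformly in the data. Since $\phi^t(u^0;c)=u(\cdot,t)=v_x(\cdot,t)$, $\psi^t(v^0;c)_x=v_x(\cdot,t)$, and $\|v_x(\cdot,t)\|_{L^\infty}$ equals the Lipschitz constant of the Lipschitz function $v(\cdot,t)$, it is enough to bound $|v_x(x,t)|$ on the full-measure set of regular $(x,t)$. At such a point, by (\ref{v_x}), $v_x(x,t)=L^{(c)}_\xi(\gamma^\ast(t),t,\gamma^\ast{}'(t))$ with $\gamma^\ast$ the unique minimizer of (\ref{value-func}), which (as recalled above) is a $C^2$ solution of the Euler--Lagrange equation for $L^{(c)}$; and by (A1)$'$ and compactness of $\T^2$ there is an increasing $\omega_t$, independent of $r$, $c\in[c_0,c_1]$ and the data, with $|L^{(c)}_\xi(y,t,\eta)|\le\omega_t(|\eta|)$ for $y\in\T$ and $c\in[c_0,c_1]$. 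So a bound $|\gamma^\ast{}'(t)|\le R(t)$ with $R(t)$ depending only on $t$ and the structure of $L$ yields $\beta_1(t):=\omega_t(R(t))$.

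\emph{Step 1 ($r$-independent integral velocity bound).} For any $\tau\in(0,t)$ the restriction $\gamma^\ast|_{[\tau,t]}$ minimizes $\int_\tau^t L^{(c)}(\gamma(s),s,\gamma'(s))\,ds$ over absolutely continuous $\gamma$ with $\gamma(\tau)=\gamma^\ast(\tau)$, $\gamma(t)=x$ --- a standard cut-and-paste argument from minimality of $\gamma^\ast$ in (\ref{value-func}), in which the initial-data term $v^0$ plays no role. Comparing with the segment $\ell_\tau$ on $\T$ from $\gamma^\ast(\tau)$ to $x$ over $[\tau,t]$, for which $|\ell_\tau'(s)|\le\frac1{2(t-\tau)}$, gives
\[
\int_\tau^t L^{(c)}(\gamma^\ast(s),s,\gamma^\ast{}'(s))\,ds\ \le\ K_0(t-\tau),\qquad K_0(\delta):=\delta\sup\Big\{\,|L^{(c)}(y,s,\eta)|:\ y\in\T,\ s\in[0,t],\ |\eta|\le\tfrac1{2\delta},\ c\in[c_0,c_1]\,\Big\},
\]
finite by (A1)$'$ and compactness. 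By (A3)$'$ and compactness of $\T^2$, for each $A>0$ there is $B(A)$ with $L(y,s,\eta)\ge A|\eta|-B(A)$ everywhere; choosing $A>|c_0|+|c_1|+1$ gives $L^{(c)}(y,s,\eta)\ge|\eta|-B$ uniformly in $c\in[c_0,c_1]$, and hence
\[
\int_\tau^t|\gamma^\ast{}'(s)|\,ds\ \le\ K_0(t-\tau)+B\,(t-\tau)\ =:\ K_1(t-\tau),
\]
again independent of $r$, $c$ and the data.

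\emph{Step 2 (from integral to pointwise).} Fix $\delta\in(0,t]$. From $\int_{t-\delta}^t|\gamma^\ast{}'|\le K_1(\delta)$ there is $s_0\in[t-\delta,t]$ with $|\gamma^\ast{}'(s_0)|\le K_1(\delta)/\delta$. Since $\gamma^\ast$ solves the Euler--Lagrange equation for $L^{(c)}$ (equivalently for $L$), whose flow is complete by (A4), I would then propagate this bound to $s=t$ by adapting the a priori compactness argument for Tonelli minimizers (Section~4.1 of \cite{Fathi-book}): with $P(s):=L_\xi(\gamma^\ast(s),s,\gamma^\ast{}'(s))$ one has $\dot P=L_x(\gamma^\ast,s,\gamma^\ast{}')$, so (A4) gives $|\dot P(s)|\le\alpha(|L(\gamma^\ast(s),s,\gamma^\ast{}'(s))|+1)$; combined with the Legendre identity $L(\gamma^\ast,s,\gamma^\ast{}')=\gamma^\ast{}'P-H(\gamma^\ast,s,P)$, the uniform superlinearity of $H$ in $p$ and of $L$ in $\xi$ over $\T^2$, and the fact that $|\gamma^\ast{}'|$ and $|P|$ control each other through the Legendre transform uniformly on $\T^2$, this produces a Gronwall-type differential inequality on $[t-\delta,t]$. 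That inequality bounds $\sup_{[t-\delta,t]}|\gamma^\ast{}'|$, hence $|\gamma^\ast{}'(t)|$, by a constant $R(t)$ depending only on $\delta,t$, the constant $\alpha$, the superlinearity moduli, and the $C^2$-bounds of $H$ and $L$ on compact velocity/momentum sets --- not on $r$, $c$, or the data. With $\beta_1(t):=\omega_t(R(t))$ the proof is complete.

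The routine pieces --- existence and $C^2$-regularity of $\gamma^\ast$, the reduction to regular points, and checking uniformity over $c\in[c_0,c_1]$ (all structural constants are uniform there) --- are dispatched quickly. The main obstacle is Step~2: turning the $r$-independent integral velocity bound into an $r$-independent pointwise bound via the Euler--Lagrange flow and (A4), where one must verify that no intermediate constant secretly depends on $r$ or the data. The key point enabling this is that Step~1's cut-and-paste trick has already eliminated the term $v^0$, so every a priori estimate involves only the comparison segments and the fixed structure of $L$ and $H$.
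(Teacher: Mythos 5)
Your overall reduction is the same as the paper's: restrict to regular points where $v_x(x,t)=L^{(c)}_\xi(\gamma^\ast(t),t,\gamma^\ast{}'(t))$, eliminate the initial datum by the cut-and-paste observation that $\gamma^\ast$ also minimizes the fixed-endpoint problem, compare with a straight segment to get an action bound independent of $r$, $c$, $v^0$, and use superlinearity to produce an anchor time where the velocity (hence the momentum) is bounded. The gap is in your Step 2. Writing $P(s):=L^{(c)}_\xi(\gamma^\ast(s),s,\gamma^\ast{}'(s))$, you propose to bound $|L^{(c)}|$ \emph{pointwise} in terms of $|P|$ via the Legendre identity and then close a ``Gronwall-type'' differential inequality for $P$ on $[t-\delta,t]$. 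But $|L^{(c)}|$ expressed through the momentum is in general superlinear in $|P|$: already for $H=p^2/2+V(x,t)$ one gets $|\dot P|\le\alpha\big(P^2/2+\sup|V|+1\big)$, a Riccati-type inequality whose comparison solution can blow up before the end of the interval. Moreover your anchor value $K_1(\delta)/\delta$ grows like $\delta^{-2}$ as $\delta\to0$ (quadratic model), while the blow-up time of the comparison ODE started from such a value shrinks like $\delta^{2}$, so for small $t$ (which forces $\delta\le t$ small) the propagation from $s_0$ to $t$ cannot be completed at all; for moderate $t$, whether an admissible $\delta$ exists depends on the size of $\alpha$ and the structural constants and is not guaranteed. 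So Step 2 as written does not yield the constant $R(t)$, and the proposition is not proved.

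The repair — and it is exactly what the paper does — is to use the action bound \emph{integrally} rather than converting it into a pointwise inequality. Since superlinearity gives $L^{(c)}\ge -L_\ast$ with $L_\ast:=|\min\{0,\inf_{x,s,\xi,c}L^{(c)}\}|<\infty$, one has $|L^{(c)}|\le L^{(c)}+2L_\ast$ along the minimizer, hence
\[
\int_0^t|\dot P(s)|\,ds\ \le\ \alpha_1\int_0^t\big(1+|L^{(c)}(\gamma^\ast(s),s,\gamma^\ast{}'(s))|\big)ds\ \le\ \alpha_1(2L_\ast+1)t+\alpha_1 C_1(t),
\]
where $C_1(t)$ is the segment-comparison action bound. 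This controls the total variation of the momentum by a constant independent of $r$, $c\in[c_0,c_1]$ and the data, and together with the bound on $|P|$ at the anchor time it gives $|P(s)|\le C_3(t)$ on all of $[0,t]$ in one shot, with no ODE comparison and no smallness restriction on $t$. You already have every ingredient for this (the action bound $K_0$, assumption (A4), the anchor point); the missing idea is simply to keep the action bound to estimate $\int|L^{(c)}|$ instead of trading it for an integral velocity bound and then attempting pointwise propagation.
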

%%%%%%%%%%%%%%%%%
\begin{proof}[{\bf Proof.}]
Fix $t\in(0,T]$.
If $(x,t)$ is regular, then (\ref{v_x}) holds for $s=t$.
Thus, it is sufficient to estimate $L^{(c)}_\xi(\gamma^\ast(t),t,\gamma^\ast{}'(t))$ for each minimizing curve $\gamma^\ast$ of (\ref{value-func}).
We now prepare two lemmas.
%%%%%%%%%%
\begin{Lemma} \label{1}
Let $\gamma^\ast$ be a minimizing curve for $v(x,t)$.
Set $y:=\gamma^\ast(0)$.
Then, $\gamma^\ast$ attains
$$\inf_{\gamma\in AC,\gamma(t)=x,\gamma(0)=y}\int^t_0 L^{(c)}(\gamma(s),s,\gamma'(s))ds.$$
\end{Lemma}
%%%%%%%%%
\begin{proof}[{\bf Proof.}]
If not, there exists $\gamma^\sharp$ such that
$$\int^t_0 L^{(c)}(\gamma^\sharp(s),s,\gamma^\sharp{}'(s))ds<\int^t_0 L^{(c)}(\gamma^\ast(s),s,\gamma^\ast{}'(s))ds.$$
Since $v^0(\gamma^\sharp(0))=y=v^0(\gamma^\ast(0))$, we have
$$\int^t_0 L^{(c)}(\gamma^\sharp(s),s,\gamma^\sharp{}'(s))ds+v^0(\gamma^\sharp(0))<\int^t_0 L^{(c)}(\gamma^\ast(s),s,\gamma^\ast{}'(s))ds+v^0(\gamma^\ast(0)).$$
Therefore, $\gamma^\ast$ is not a minimizing curve for $v(x,t)$, which is a contradiction.
\end{proof}
We define the following set:
$$\Gamma(t):=\left\{ \gamma^{(c)} \,\,|\,\,\gamma^{(c)}\mbox{ attains } \inf_{\gamma(t)=x,\gamma(0)=y}\int^t_0 L^{(c)}(\gamma(s),s,\gamma'(s))ds,\,\,\,x,y\in\T,c\in[c_0,c_1]   \right\}.$$
By Lemma ~\ref{1}, any minimizing curve $\gamma^\ast$ for $v(x,t)$, $x\in\T$, belongs to $\Gamma(t)$.
(Actually, we should take $\gamma^\ast\mod1$, but this is not important due to the periodic setting.)
%%%%%%%%%%
\begin{Lemma} \label{2}
\begin{enumerate}
\item There exists a constant $C_1(t)>0$ such that for any $x,y\in\T$ we have a $C^1$-curve $\gamma$ that satisfies
$$\gamma(t)=x,\,\,\,\gamma(0)=y,\,\,\,\int_0^tL^{(c)}(\gamma(s),s,\gamma'(s))ds\le C_1(t).$$
In particular,  any $\gamma^{(c)}\in\Gamma(t)$ satisfies
$$\int_0^tL^{(c)}(\gamma^{(c)}(s),s,\gamma^{(c)}{}'(s))ds\le C_1(t).$$
\item There exists a constant $C_2(t)>0$ such that for any $\gamma^{(c)}\in\Gamma(t)$ we have $\tau\in(0,t)$ that satisfies
$$|\gamma^{(c)}{}'(\tau)|\le C_2(t).$$
\item There exists a constant $C_3(t)>0$ such that for any $\gamma^{(c)}\in\Gamma(t)$ we have
$$|L^{(c)}_\xi(\gamma^{(c)}(s),s,\gamma^{(c)}{}'(s))|\le C_3(t),\quad s\in[0,t].$$
\end{enumerate}
\end{Lemma}
%%%%%%%%%
\begin{proof}[{\bf Proof.}]
1. Consider $\gamma(s):=x+\frac{x-y}{t}(s-t)$.
Since $|x-y|\le 1$, we have $|\gamma'(s)|\le t^{-1}$.
Therefore, we obtain
$$\int_0^tL^{(c)}(\gamma(s),s,\gamma'(s))ds\le \sup_{x,s\in\T,|\xi|\le t^{-1},c\in[c_0,c_1]}|L^{(c)}(x,s,\xi)|t.$$
Set $C_1(t):=\sup_{x,s\in\T,|\xi|\le t^{-1},c\in[c_0,c_1]}|L^{(c)}(x,s,\xi)|t$ and Claim~1 is proved.

2. Due to Claim~1 and the minimizing property of $\gamma^{(c)}$, we have $\tau\in(0,t)$ that satisfies
$$C_1(t)\ge\int_0^tL^{(c)}(\gamma^{(c)}(s),s,\gamma^{(c)}{}'(s))ds=L^{(c)}(\gamma^{(c)}(\tau),\tau,\gamma^{(c)}{}'(\tau))t.$$
By (A3), $|\gamma^{(c)}{}'(\tau)|$ must be bounded by a constant $C_2(t)$ independent of $\gamma^{(c)}\in\Gamma(t)$.

3.  Note that $\gamma^{(c)}$ is a $C^2$-solution of the following Euler-Lagrange equation generated by $L^{(c)}$:
$$\frac{d}{dt}L^{(c)}_\xi(\gamma^{(c)}(s),s,\gamma^{(c)}{}'(s))=L_x(\gamma^{(c)}(s),s,\gamma^{(c)}{}'(s)).$$
It follows from (A1)--(A4) that there exists $\alpha_1$ for which $|L^{(c)}_x|\le\alpha_1(|L^{(c)}|+1)$ for any $c\in[c_0,c_1]$ and that $\dis L_\ast:=|\min\{0,\inf_{x,s,\xi,c}L^{(c)}\}|$ is bounded.
We have $\tau^\ast\in[0,t]$, which attains the maximum of $|L^{(c)}_\xi(\gamma^{(c)}(s),s,\gamma^{(c)}{}'(s))|$, $0\le s\le t$. Suppose that  $\tau^\ast\neq\tau$, where $\tau$ is the value in Claim~2.
Then,
\begin{eqnarray*}
|  \int^{\tau^\ast}_\tau\frac{d}{dt}L^{(c)}_\xi(\gamma^{(c)}(s),s,\gamma^{(c)}{}'(s))ds |
&=&|L^{(c)}_\xi(\gamma^{(c)}(\tau^\ast),\tau^\ast,\gamma^{(c)}{}'(\tau^\ast))-L^{(c)}_\xi(\gamma^{(c)}(\tau),\tau,\gamma^{(c)}{}'(\tau))|\\
&\le&\int_0^t|L^{(c)}_x(\gamma^{(c)}(s),s,\gamma^{(c)}{}'(s))|ds\\
&\le&\int_0^t\alpha_1(1+|L^{(c)}(\gamma^{(c)}(s),s,\gamma^{(c)}{}'(s))|)ds \\
&\le&\alpha_1\int_0^t1+(L^{(c)}(\gamma^{(c)}(s),s,\gamma^{(c)}{}'(s))+L_\ast)+L_\ast ds\\
&=&\alpha_1(2L_\ast+1)t+\alpha_1\int_0^tL^{(c)}(\gamma^{(c)}(s),s,\gamma^{(c)}{}'(s))ds\\
&\le&\alpha_1(2L_\ast+1)t+\alpha_1C_1(t).
\end{eqnarray*}
Therefore, setting
$$C_3(t):=\alpha_1(2L_\ast+1)t+\alpha_1C_1(t)+\sup_{x,s\in\T,|\xi|\le C_2(t),c\in[c_0,c_1]}|L^{(c)}_\xi(x,s,\xi)|,$$
for $0\le s\le t$ we obtain
$$|L^{(c)}_\xi(\gamma^{(c)}(s),s,\gamma^{(c)}{}'(s))|\le|L^{(c)}_\xi(\gamma^{(c)}(\tau^\ast),\tau^\ast,\gamma^{(c)}{}'(\tau^\ast))|\le C_3(t).$$
The case $\tau^\ast=\tau$ is included by the above inequality.
\end{proof}

Since $(x,t)$ is regular for almost every $x\in\T$ with each fixed $t$ and $v_x(x,t)=L^{(c)}_\xi(\gamma^\ast(t),t,\gamma^\ast{}'(t))$ holds for almost every $x\in\T$, we obtain Proposition~\ref{boundedness} by setting $\beta_1(t):=C_3(t)$.
\end{proof}
We show continuity of $\phi^t(v^0_x;c)$ and  $\psi^t(v_0;c)$ with respect to $v^0$ and $c$.
%%%%%%%%%%%%%%%%%
\begin{Prop}\label{continuity}
Fix $t\in(0,T]$.
For each sequence $v^0_j\to v^0$ uniformly and  $c^j\to c$ as $j\to\infty$ ($v^0_j{}_x$ is not necessarily convergent), we have
\begin{eqnarray*}
\psi^t(v^0_j;c^j)\to\psi^t(v^0;c)\mbox{ uniformly,\,\,\, }\phi^t(v^0_j{}_x;c^j)\to\phi^t(v^0_x;c)\mbox{ in $L^1(\T)$ \,\,\,\,\,\,as $j\to\infty$.}
\end{eqnarray*}
\end{Prop}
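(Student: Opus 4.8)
The plan is to handle the viscosity-solution statement first and then deduce the conservation-law statement by a subsequence/compactness argument. For the uniform convergence $\psi^t(v^0_j;c^j)\to\psi^t(v^0;c)$, I would work directly from the variational formula (\ref{value-func}). Fix $(x,t)$. Write $v_j:=\psi^t(v^0_j;c^j)$ and $v:=\psi^t(v^0;c)$. Proposition~\ref{boundedness} gives a uniform Lipschitz bound $\beta_1(t)$ on all the $v_j$ (independent of $j$, since the $v^0_j$ lie in $Lip_r(\T)$ for $j$ large), so $\{v_j\}$ is equi-Lipschitz; by Arzelà--Ascoli it suffices to identify every uniform limit of a subsequence with $v$. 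The key estimate is that the minimizing curves for $v_j(x,t)$ lie in a compact set: by Lemma~\ref{1} and Lemma~\ref{2}, any minimizer $\gamma^\ast_j$ for $v_j(x,t)$ belongs to $\Gamma(t)$ (with the parameter range $c\in[c_0,c_1]$ fixed once and for all), hence has $|\gamma^\ast_j{}'|$ bounded uniformly via the $C^1$-bound coming from $|L^{(c)}_\xi|\le C_3(t)$ together with (A2)$'$; so the $\gamma^\ast_j$ are equi-Lipschitz and, passing to a further subsequence, converge uniformly to some $\gamma\in AC$ with $\gamma(t)=x$.

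Now the comparison goes both ways. On one hand, $v_j(x,t)\le \int_0^t L^{(c^j)}(\gamma(s),s,\gamma'(s))\,ds + v^0_j(\gamma(0)) + h(c^j)t$ for the optimal $\gamma$ of the limiting problem; letting $j\to\infty$, using uniform convergence $v^0_j\to v^0$, continuity of $h$, and continuity of $L^{(c)}$ jointly in $(x,s,\xi,c)$ (so that $L^{(c^j)}\to L^{(c)}$ uniformly on the relevant compact velocity range), we get $\limsup_j v_j(x,t)\le v(x,t)$. On the other hand, along the subsequence above with $\gamma^\ast_j\to\gamma$, lower semicontinuity of the action $\gamma\mapsto\int_0^t L^{(c)}(\gamma,s,\gamma')ds$ under uniform (indeed weak-$W^{1,1}$) convergence — standard since $L^{(c)}$ is convex in $\xi$ — combined with $v^0_j(\gamma^\ast_j(0))\to v^0(\gamma(0))$ and $h(c^j)t\to h(c)t$, yields $\liminf_j v_j(x,t)\ge \int_0^t L^{(c)}(\gamma,s,\gamma')ds + v^0(\gamma(0)) + h(c)t \ge v(x,t)$. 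Hence $v_j(x,t)\to v(x,t)$ pointwise, and together with equi-Lipschitz continuity this upgrades to uniform convergence. The main obstacle is the lower-semicontinuity step: one must be careful that $L^{(c^j)}$ depends on $j$ while the curves also vary, so I would first replace $L^{(c^j)}$ by $L^{(c)}$ at the cost of an error $\sup|L^{(c^j)}-L^{(c)}|\cdot t\to 0$ uniformly on the compact velocity set, and only then invoke the classical convexity-based lower-semicontinuity theorem for the fixed Lagrangian $L^{(c)}$.

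For the $L^1$-convergence $\phi^t(v^0_j{}_x;c^j)\to\phi^t(v^0_x;c)$, I would not try to prove pointwise convergence of derivatives (which fails near shocks); instead, recall that $u_j:=\phi^t(v^0_j{}_x;c^j)=(v_j)_x$ and $u:=(v)_x$, and that by Proposition~\ref{boundedness} all $u_j$ are bounded in $L^\infty(\T)$ by $\beta_1(t)$, hence relatively compact in the weak-$\ast$ topology. Since $v_j\to v$ uniformly, $(v_j)_x\to v_x$ in the sense of distributions, so the only possible weak-$\ast$ limit of any subsequence of $u_j$ is $u$; thus $u_j\rightharpoonup u$ weak-$\ast$ in $L^\infty$. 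To improve weak to strong $L^1$ convergence I would use the one-sided Lipschitz (Oleinik) bound satisfied by entropy solutions: each $u_j(\cdot,t)$ satisfies a one-sided Lipschitz estimate with a constant depending only on $t$ and the structural constants (this is exactly the semiconcavity of $v_j$, which follows from the variational representation and the $C^2$-regularity in (A1)--(A2)), so $\{u_j(\cdot,t)\}$ is bounded in $BV(\T)$ uniformly in $j$; then Helly's compactness theorem gives a subsequence converging in $L^1$, and its limit must be $u$ by the distributional identification above. Since every subsequence has a further subsequence converging in $L^1$ to the same limit $u$, the full sequence converges in $L^1$. The delicate point here is establishing the uniform $BV$ (equivalently, uniform semiconcavity) bound for $v_j$: I would derive it from semiconcavity of the value function (\ref{value-func}), which holds for $t>0$ with a constant controlled by the $C^2$-norm of $L^{(c)}$ on the compact set $\Gamma(t)$ furnished by Lemma~\ref{2} — independently of whether $v^0_j$ itself is semiconcave, because the infimum-convolution structure regularizes instantaneously.
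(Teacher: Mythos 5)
Your proposal is correct, but it proceeds along a genuinely different route than the paper. For the uniform convergence of $\psi^t(v^0_j;c^j)$, the paper does not use compactness of minimizers, Arzel\`a--Ascoli, or lower semicontinuity at all: it plugs the minimizer of each problem as a test curve into the other problem's variational formula and exploits the exact cancellation of the $L$-terms (since $L^{(c^j)}-L^{(c)}=-(c^j-c)\xi$), which leaves only $\int_0^t(c^j-c)\gamma'\,ds$, $v^0_j-v^0$ at the foot of the curve, and $(h(c^j)-h(c))t$; these are uniformly small because minimizers are equi-Lipschitz by Claim~3 of Lemma~\ref{2}. That argument is shorter and quantitative (it yields an explicit modulus in terms of $|c^j-c|$, $\|v^0_j-v^0\|_{C^0}$, $|h(c^j)-h(c)|$), whereas your direct-method argument is qualitative but relies only on standard Tonelli machinery. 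For the $L^1$-statement, the paper proves pointwise a.e.\ convergence of $\phi^t(v^0_j{}_x;c^j)$ at common regular points, using convergence of the minimizing curves ($\gamma^\ast_j\to\gamma^\ast$ uniformly, $\gamma^\ast_j{}'\to\gamma^\ast{}'$ in $L^2$, citing Lemma~3.4 of \cite{Soga2}) together with the representation of $u$ along characteristics with a well-chosen intermediate time $\tau$, and then concludes by boundedness; your route instead identifies the limit weakly via the already-proved uniform convergence of $v_j$ and upgrades to strong $L^1$ by a uniform one-sided Lipschitz (semiconcavity) bound plus Helly compactness and the subsequence principle. Both are valid; yours trades the external lemma on minimizer convergence for the classical instantaneous semiconcavity estimate of the value function (uniform in $j$ by the compactness in Lemma~\ref{2}), an ingredient the paper only establishes in its discrete form (Proposition~\ref{entropy-condition}) and never invokes for the continuous problem, while the paper's route delivers the slightly stronger pointwise a.e.\ convergence as a byproduct. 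One small imprecision to fix if you write this up: the semiconcavity constant is controlled by $C^2$-bounds of $L^{(c)}$ on the compact region of $(x,s,\xi)$ swept out by curves in $\Gamma(t)$ (as furnished by Lemma~\ref{2}), not on $\Gamma(t)$ itself, and you should note that it is uniform in $c\in[c_0,c_1]$ because the $c$-dependence of $L^{(c)}$ is affine in $\xi$.
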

%%%%%%%%%%%%%%%%%
\begin{proof}[{\bf Proof.}]
By the variational representation, we have
\begin{eqnarray*}
\psi^t(v^0;c)(x)&=& \int^t_0 L(\gamma^\ast(s),s,\gamma^\ast{}'(s))-c\gamma^\ast{}'(s)ds+v^0(\gamma^\ast(0))+h(c)t,\\
\psi^t(v^0_j;c^j)(x)&=& \int^t_0 L(\gamma^\ast_j(s),s,\gamma^\ast_j{}'(s))-c^j\gamma^\ast_j{}'(s)ds+v^0_j(\gamma^\ast_j(0))+h(c^j)t
\end{eqnarray*}
and hence
\begin{eqnarray*}
\psi^t(v^0_j;c^j)(x)-\psi^t(v^0;c)(x)&\le& \int^t_0 -(c^j-c)\gamma^\ast{}'(s)ds+v^0_j(\gamma^\ast(0))-v^0(\gamma^\ast(0))\\
&& +(h(c^j )-h(c ))t,\\
\psi^t(v^0_j;c^j)(x)-\psi^t(v^0;c)(x)&\ge&  \int^t_0 -(c^j-c)\gamma^\ast_j{}'(s)ds+v^0_j(\gamma^\ast_j(0))-v^0(\gamma^\ast_j(0))\\
&& +(h(c^j )-h(c ))t.
\end{eqnarray*}
It follows from Claim~3 of Lemma~\ref{2} that  any minimizing curves for $v(x,t)$  are Lipschitz with a common Lipschitz constant for all  $x\in\T$ and  $v^0\in Lip_r(\T)$.
Since $h$ is continuous, we conclude that $\psi^t(v^0_j;c^j)\to\psi^t(v^0;c)$ uniformly as $j\to\infty$.

Let $x\in\T$ be a common regular point of all $\psi^t(v^0_j;c^j)$, $j= 1,2,3,\ldots$.
Almost every point is such a point.
Through a variational technique, we find that $\gamma^\ast_j\to\gamma^\ast$ uniformly and $\gamma^\ast_j{}'\to\gamma^\ast{}'$ in $L^2$ as $j\to\infty$  (e.g., see Lemma~3.4 in~\cite{Soga2}).
Note that for each $0\le \tau<t$ we have
\begin{eqnarray*}
&&\phi^t(v^0_x;c)(x)=\psi^t(v^0;c)_x(x)=\int^t_\tau L_x(\gamma^\ast(s),s,\gamma^\ast{}'(s))ds+L_\xi(\gamma^\ast(\tau),\tau,\gamma^\ast{}'(\tau))-c,\\
&&\phi^t(v^0_j{}_x;c^j)(x)=\psi^t(v^0_j;c^j)_x(x)=\int^t_\tau L_x(\gamma^\ast_j(s),s,\gamma^\ast_j{}'(s))ds+L_\xi(\gamma^\ast_j(\tau),\tau,\gamma^\ast_j{}'(\tau))-c^j.
\end{eqnarray*}
For any $\varepsilon>0$, there exists a $J$ such that, if $j\ge J$, we have $\norm \gamma^\ast_j-\gamma^\ast\norm_{C^0}\le\varepsilon$ and $\norm \gamma^\ast_j{}'-\gamma^\ast{}'\norm_{L^2}\le\varepsilon\sqrt{t}$.
Note that we have $\tau$ (depending on $j\ge J$) such that $|\gamma^\ast_j{}'(\tau)-\gamma^\ast{}'(\tau)|\le\varepsilon$.
Therefore, we conclude that  $\phi^t(v^0_j{}_x;c^j)\to\phi^t(v^0_x;c)$ pointwise almost everywhere.
This immediately leads to $L^1(\T)$-convergence.
\end{proof}
%%%%%%%%%%%%%%%%%
\subsection{Stochastic and Variational Approach to the Lax-Friedrichs Scheme}
In this subsection, we state several results of the stochastic and variational approach to the Lax-Friedrichs scheme that are shown in~\cite{Soga2}.
Let $N,K$ be natural numbers with $N\le K$.
The mesh size $\Delta=(\dx,\dt)$ is defined by $\dx:=(2N)^{-1}$ and $\dt:=(2K)^{-1}$.
We set $\lambda:=\dt/\dx$.
We also set $x_m:=m\dx$ for $m\in\Z$ and $t_k:=k\dt$ for $k=0,1,2,\ldots$.
For $x\in\R$ and $t>0$, the notation $m(x),k(t)$ denotes the integers $m,k$ for which $x\in[x_m,x_m+2\dx)$ and $t\in[t_k,t_k+\dt)$, respectively.
Let $(\dx\Z)\times(\dt\Z_{\ge0})$ be the set of all $(x_m,t_k)$, and let
$$\mathcal{G}_{even}\subset (\dx\Z)\times(\dt\Z_{\ge0}),\qquad\mathcal{G}_{odd}\subset (\dx\Z)\times(\dt\Z_{\ge0})$$
be the set of all $(x_m,t_k)$ with $k=0,1,2,\ldots$ and $m\in\Z$ such that  $m+k$ is even (odd), which is called the even grid (odd grid).
We consider the discretization of (\ref{CL}) by the Lax-Friedrichs scheme in $\mathcal{G}_{even}$:
\begin{eqnarray}\label{2CL-Delta}
\left\{
\begin{array}{lll}
&\dis \frac{u^{k+1}_{m+1}-\frac{(u^k_{m}+u^k_{m+2})}{2}}{\dt}
+\frac{H(x_{m+2},t_k,c+u^k_{m+2})-H(x_m,t_k,c+u^k_m)}{2\dx}
=0, \\\\\medskip\medskip
&u^0_m=u^0_\Delta(x_m),\quad u^k_{m\pm 2N}=u^k_{m},
\end{array}
\right.
\end{eqnarray}
where for $m$ even
\begin{eqnarray}\label{u^0}
u_\Delta^0(x):=\frac{1}{2\dx}\int^{x_{m}+\dx}_ {x_m-\dx}u^0(y)dy\mbox{\quad for $x\in[x_m-\dx,x_{m}+\dx)$}.
\end{eqnarray}
Note that $\dis\sum_{\{m\,|\,0\le m< 2N,\,m+k\;\mbox{even}\}}u^k_m\cdot2\dx$ is conservative with respect to $k$ and is zero for $u^0$ that  has zero mean.
We also discretize (\ref{HJ}) in $\mathcal{G}_{odd}$:
\begin{eqnarray}\label{2HJ-Delta}\quad
\left\{
\begin{array}{lll}
&\dis \frac{ v^{k+1}_{m} - \frac{(v^k_{m-1}+v^k_{m+1})}{2} }{\dt}
+H(x_{m},t_k,c+\frac{v^k_{m+1}-v^k_{m-1}}{2\dx})
=h(c),\\\\\medskip\medskip
&v^0_{m+1}=v^0_\Delta(x_{m+1}),\quad v^k_{m+1\pm 2N}=v^k_{m+1},
\end{array}
\right.
\end{eqnarray}
where, in addition to $u^0=v^0_x$, we assume that
\begin{eqnarray}\label{v^0}
\,\,\,\,\,\,v^0_\Delta(x):=v^0(-\dx)+\int^x_{-\dx}u^0_\Delta(y)dy\mbox{ (i.e., $v^0_\Delta(x_{m+1})=v^0(x_{m+1})$ for $m$ even).}
\end{eqnarray}
Note that $u_\Delta^0\to u^0$ in $L^1(\T)$ and $v_\Delta^0\to v^0$ uniformly with $\norm v^0_\Delta-v^0\norm_{C^0}\le \norm u^0\norm_{L^\infty}\cdot 2\dx$, as $\Delta\to0$.
We introduce the following difference operators:
$$D_tw^{k+1}_m:=\frac{w^{k+1}_m-\frac{w^k_{m-1}+w^k_{m+1}}{2}}{\dt},\quad D_xw^{k}_{m+1}:=\frac{w^{k}_{m+1}-w^k_{m-1}}{2\dx}.$$
The two problems (\ref{2CL-Delta}) and (\ref{2HJ-Delta}) are equivalent under (\ref{u^0}) and (\ref{v^0}).
In particular, we have $D_xv^k_{m+1}=u^k_m$ \cite{Soga2}.
Let $u_\Delta$ be the step function derived from the solution $u^k_m$ of (\ref{2CL-Delta}); namely,
$$\mbox{$u_\Delta(x,t):=u^k_m$ for $(x,t)\in[x_{m-1},x_{m+1})\times[t_k,t_{k+1})$.}$$
Let $v_\Delta$ be the linear interpolation with respect to the space variable derived from the solution $v^k_{m+1}$ of (\ref{2HJ-Delta}); namely,
$$\mbox{$v_\Delta(x,t):=v^k_{m-1}+D_xv^k_{m+1}\cdot(x-x_{m-1})$ for $(x,t)\in[x_{m-1},x_{m+1})\times[t_k,t_{k+1})$.}$$
We remark that $v_\Delta(x,\cdot)$ is a step function for each fixed $x$ and that $(v_\Delta)_x=u_\Delta$.

We introduce space-time inhomogeneous random walks in $\mathcal{G}_{odd}$, which correspond to characteristic curves of (\ref{CL}) and (\ref{HJ}).
For each point  $(x_n,t_{l+1})\in\mathcal{G}_{odd}$, we introduce backward random walks $\gamma$ that start from $x_n$ at $t_{l+1}$ and move by $\pm\dx$ in each backward time step:
$$\gamma=\{\gamma^k\}_{k=0,1,\ldots,l+1},\quad\gamma^{l+1}=x_{n},\quad \gamma^{k+1}-\gamma^k=\pm\dx.$$
More precisely,  for each $(x_n,t_{l+1})\in\mathcal{G}_{odd}$ we introduce the following:
\begin{eqnarray*}
&&X^k:=\{ x_m \,|\, \mbox{ $(x_m,t_k)\in\mathcal{G}_{odd}$, $|x_m-x_n|\le(l+1-k)\dx$}\}\mbox{ for }k\le l+1,\\
&&G:=\bigcup_{1\le k\le l+1}\big(X^{k}\times\{t_{k}\}\big)\subset\mathcal{G}_{odd}, \\
&&\xi:G\ni(x_m,t_k)\mapsto\xi^k_m\in[-\lambda^{-1},\lambda^{-1}],\quad \lambda=\dt/\dx, \\
&&\bar{\bar{\rho}}: G\ni(x_m,t_k)\mapsto\bar{\bar{\rho}}^k_m:=\frac{1}{2}-\frac{1}{2}\lambda\xi^k_m\in[0,1],\\
&&\bar{\rho}: G\ni(x_m,t_k)\mapsto\bar{\rho}^k_m:=\frac{1}{2}+\frac{1}{2}\lambda\xi^k_m\in[0,1],\\
&&\gamma:\{ 0,1,2,\ldots,l+1\}\ni k\mapsto \gamma^k\in X^k,\mbox{ $\gamma^{l+1}=x_n$, $\gamma^{k+1}-\gamma^k=\pm\dx$},\\
&&\Omega:\mbox{the family of the above $\gamma$}.
\end{eqnarray*}
We regard $\bar{\bar{\rho}}^k_m$ (resp. $\bar{\rho}^k_m$) as the probability of transition from $(x_m,t_k)$ to $(x_m+\dx,t_k-\dt)$ (resp. from $(x_m,t_k)$  to $(x_m-\dx,t_k-\dt)$).
Note that $\xi$ is a control for random walks, which plays the role of a velocity field on the grid.
We define the density of each path $\gamma\in\Omega$ as
$$\mu(\gamma):=\prod_{1\le k\le l+1}\rho(\gamma^{k},\gamma^{k-1}),$$
where $\rho(\gamma^{k},\gamma^{k-1})=\bar{\bar{\rho}}^k_{m(\gamma^{k})}$ (resp. $\bar{\rho}^k_{m(\gamma^{k})}$) if $\gamma^{k}-\gamma^{k-1}=-\dx$ (resp. $\dx$).
The density $\mu(\cdot)=\mu(\cdot;\xi)$ yields a probability measure for $\Omega$; namely,
$$prob(A)=\sum_{\gamma\in A}\mu(\gamma;\xi)\mbox{\quad for $A\subset\Omega$}. $$
The expectation with respect to this probability measure is denoted by $E_{\mu(\cdot;\xi)}$; namely, for a random variable $f:\Omega\to\R$ we have
$$E_{\mu(\cdot;\xi)}[f(\gamma)]:=\sum_{\gamma\in\Omega}\mu(\gamma;\xi)f(\gamma).$$
We use $\gamma$ as the symbol for random walks or a sample path.
If necessary, we write $\gamma=\gamma(x_n,t_{l+1};\xi)$ in order to specify its initial point and control.

We now state an important result on the scaling limit of inhomogeneous random walks.
Let $\eta(\gamma)=\{\eta^k(\gamma)\}_{k=0,1,2,\ldots,l+1}$, $\gamma\in\Omega$ be a random variable that is  induced by a random walk $\gamma=\gamma(x_n,t_{l+1};\xi)$ and is defined by
$$\eta^{l+1}:=\gamma^{l+1},\,\,\,\,\,\, \eta^k(\gamma):=\gamma^{l+1}-\sum_{k< k'\le l+1}\xi(\gamma^{k'},t_{k'})\dt\mbox{ \,\,\, for $0\le k\le l$}.$$
%%%%%%%%%%%%%%%%%%%%
\begin{Prop}(\cite{Soga1})\label{eta}
Set $\dis \tilde{\sigma}^k:=E_{\mu(\cdot;\xi)}[|\gamma^k-\eta^k(\gamma)|^2]$ and $\dis \tilde{d}^k:=E_{\mu(\cdot;\xi)}[|\gamma^k-\eta^k(\gamma)|]$ for $0\le k\le l+1$.
Then, we have
$$(\tilde{d}^k)^2\le\tilde{\sigma}^k\le \frac{t^{l+1}-t^k}{\lambda}\dx.$$
\end{Prop}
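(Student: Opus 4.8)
The plan is to exploit the martingale structure of the centered process $\gamma^k-\eta^k(\gamma)$. Writing $Z^k:=\gamma^k-\eta^k(\gamma)$, the definition of $\eta$ gives $Z^{l+1}=0$ and, for each backward step, $Z^{k}-Z^{k+1}=(\gamma^k-\gamma^{k+1})+\xi(\gamma^{k+1},t_{k+1})\dt$. Since $\gamma^k-\gamma^{k+1}=\pm\dx$ with conditional probabilities $\bar\rho^{k+1}_{m(\gamma^{k+1})}$ (for $+\dx$) and $\bbrho^{k+1}_{m(\gamma^{k+1})}$ (for $-\dx$), and since $\bar\rho-\bbrho=\lambda\xi=(\dt/\dx)\xi$, the conditional expectation of $\gamma^k-\gamma^{k+1}$ given the path down to time $t_{k+1}$ equals $\dx\cdot(\bar\rho^{k+1}-\bbrho^{k+1})=\xi(\gamma^{k+1},t_{k+1})\dt$. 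Hence $E[Z^k-Z^{k+1}\mid\mathcal F_{k+1}]=0$: the increments of $Z^k$ (as $k$ runs backward) are martingale differences. First I would make this conditional-expectation computation precise, tracking the filtration generated by $\gamma^{l+1},\gamma^l,\dots,\gamma^{k+1}$.

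Next I would compute the conditional second moment of a single increment. Because $\gamma^k-\gamma^{k+1}=\pm\dx$, we have $(\gamma^k-\gamma^{k+1}+\xi\dt)^2$ taking the two values $(\dx\mp\dx)$-type expressions; a direct expansion gives conditional variance $\dx^2-\xi^2\dt^2=\dx^2(1-\lambda^2\xi^2)\le\dx^2$. Therefore, by the martingale/orthogonality property (the cross terms vanish upon taking expectations, since each increment is conditionally centered given the earlier ones),
\begin{eqnarray*}
\tilde\sigma^k=E_{\mu(\cdot;\xi)}\Big[\Big(\sum_{k<k'\le l+1}(Z^{k'-1}-Z^{k'})\Big)^2\Big]=\sum_{k<k'\le l+1}E_{\mu(\cdot;\xi)}\big[(Z^{k'-1}-Z^{k'})^2\big]\le (l+1-k)\dx^2.
\end{eqnarray*}
Since $(l+1-k)\dt=t^{l+1}-t^k$ and $\dx^2=\dx\cdot\dt/\lambda$, this right-hand side equals $\frac{t^{l+1}-t^k}{\lambda}\dx$, giving the upper bound for $\tilde\sigma^k$. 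The inequality $(\tilde d^k)^2\le\tilde\sigma^k$ is then immediate from the Cauchy--Schwarz (Jensen) inequality applied to $|Z^k|$ under the probability measure $\mu(\cdot;\xi)$.

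The main obstacle I anticipate is purely bookkeeping rather than conceptual: one must be careful that the control $\xi^{k}_{m}$ is a deterministic function on the grid $G$, so that $\xi(\gamma^{k+1},t_{k+1})$ is genuinely $\mathcal F_{k+1}$-measurable and the conditional-expectation identities hold verbatim; and one must verify that the telescoping sum $Z^k=\sum_{k<k'\le l+1}(Z^{k'-1}-Z^{k'})$ together with the nested-conditioning argument legitimately kills all cross terms $E[(Z^{i-1}-Z^i)(Z^{j-1}-Z^j)]$ for $i<j$ (condition on $\mathcal F_i$, pull out the $\mathcal F_i$-measurable later-increment factor, use that the $i$-th increment is conditionally centered). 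Once the filtration and measurability are set up correctly, the variance bound $\dx^2(1-\lambda^2\xi^2)\le\dx^2$ and the summation are routine. I would also note in passing that this is essentially the discrete analogue of the bound $E|B_t-B_s|^2=t-s$ for Brownian motion, with $\lambda^{-1}\dx$ playing the role of the time step — which explains the form of the stated estimate.
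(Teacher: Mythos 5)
Your overall strategy is the right one, and indeed the natural one: this paper does not prove Proposition~\ref{eta} itself (it is quoted from \cite{Soga1}), and the martingale argument you give --- $Z^k:=\gamma^k-\eta^k(\gamma)$ has conditionally centered increments, each with conditional second moment $\dx^2(1-\lambda^2(\xi^{k+1}_{m(\gamma^{k+1})})^2)\le\dx^2$, so orthogonality of the increments gives $\tilde\sigma^k\le(l+1-k)\dx^2=\frac{t^{l+1}-t^k}{\lambda}\dx$, and Jensen gives $(\tilde d^k)^2\le\tilde\sigma^k$ --- is exactly the standard proof one expects for this statement. The filtration point you raise (that $\xi$ is a deterministic function on $G$, so $\xi(\gamma^{k+1},t_{k+1})$ is measurable with respect to the path down to time $t_{k+1}$, and the cross terms die by nested conditioning) is handled correctly.

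The one thing to fix is a sign/labeling slip in the drift computation: with the paper's conventions, $\bbrho^{k+1}_{m(\gamma^{k+1})}=\frac12-\frac12\lambda\xi^{k+1}_{m(\gamma^{k+1})}$ is the probability of the move $\gamma^k-\gamma^{k+1}=+\dx$ and $\brho^{k+1}_{m(\gamma^{k+1})}=\frac12+\frac12\lambda\xi^{k+1}_{m(\gamma^{k+1})}$ that of $-\dx$, i.e.\ you have the two swapped. As written, your conditional mean of $\gamma^k-\gamma^{k+1}$ comes out as $+\xi(\gamma^{k+1},t_{k+1})\dt$, which together with your (correct) identity $Z^k-Z^{k+1}=(\gamma^k-\gamma^{k+1})+\xi(\gamma^{k+1},t_{k+1})\dt$ would give conditional drift $2\xi\dt$, contradicting the asserted martingale property. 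With the correct assignment the conditional mean is $\dx(\bbrho^{k+1}-\brho^{k+1})=-\xi(\gamma^{k+1},t_{k+1})\dt$, which is precisely cancelled by the $\eta$-increment, so $E[Z^k-Z^{k+1}\mid\mathcal F_{k+1}]=0$ and the rest of your computation (the per-step bound $\dx^2-\xi^2\dt^2\le\dx^2$, the summation, and the conversion $(l+1-k)\dx^2=\frac{t^{l+1}-t^k}{\lambda}\dx$) goes through verbatim. So this is a bookkeeping error to correct, not a gap in the method.
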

%%%%%%%%%%%%%%%%%%%%%
\noindent If we take the hyperbolic scaling limit, in which $\Delta=(\dx,\dt)\to0$ under
$$0<\lambda_0\le\lambda=\dt/\dx<\lambda_1,$$
then $\tilde{d}^k$ and $\sqrt{\tilde{\sigma}^k}$ always tend to zero with $O(\sqrt{\dx})$.
Note that the variance does not necessarily do so for inhomogeneous random walks.
We refer to~\cite{Soga1} for more details of the hyperbolic scaling limit of inhomogeneous random walks.
Note that we always take the limit $\Delta\to0$ under hyperbolic scaling.

Now we state results for the stochastic and variational approach to the Lax-Friedrichs scheme.
%%%%%%%%%%%%%%%%%%
\begin{Thm}[\cite{Soga2}]\label{SVA}
There exists $\lambda_1>0$ (depending  on $T$, $[c_0,c_1]$, and $r$) such that for any small $\Delta=(\dx,\dt)$ with $\lambda=\dt/\dx<\lambda_1$ we have the following:
\begin{enumerate}
\item The expectation
\begin{eqnarray*}\label{action-delta}
E_{\mu(\cdot;\xi)}\Big[\sum_{0<k\le l+1}L^{(c)}(\gamma^k,t_{k-1},\xi^k_{m(\gamma^k)})\dt +v^0_\Delta(\gamma^0)\Big]+h(c)t_{l+1}
\end{eqnarray*}
which is given by $\gamma=\gamma(x_n,t_{l+1};\xi)$, has an infimum with respect to $\xi:G\to[-\lambda^{-1},\lambda^{-1}]$ for each $n\in\Z$ and $0<l+1\le k(T)$.
The infimum is attained by the $\xi^\ast $ that satisfies $|\xi^\ast|\le\lambda_1^{-1}<\lambda^{-1}$.
\item  For each $n\in\Z$ and $0< l+1 \le k(T)$ the solution of (\ref{2HJ-Delta}) satisfies
$$v^{l+1}_n=\inf_{\xi}E_{\mu(\cdot;\xi)}\Big[\sum_{0<k\le l+1}L^{(c)}(\gamma^k,t_{k-1},\xi^k_{m(\gamma^k)})\dt +v^0_\Delta(\gamma^0)\Big]+h(c)t_{l+1}.$$
\item  For each $v^{l+1}_n$ the minimizing velocity field $\xi^\ast$ is unique and in $G$ satisfies
$$L^{(c)}_\xi(x_m,t_{k},\xi^\ast{}^{k+1}_m)=D_xv^{k}_{m+1}\,\,\,(\Leftrightarrow
\xi^\ast{}^{k+1}_m=H_p(x_m,t_{k},c+D_xv^{k}_{m+1})).$$
\item Let $\xi^\ast$ (resp. $\tilde{\xi}^\ast$) be the minimizing velocity field for $v^{l+1}_n$ (resp. $v^{l+1}_{n+2}$). Let $\gamma=\gamma(x_n,t_{l+1};\xi^\ast)$ and $\mu(\cdot;\xi^\ast)$ (resp. $\tilde{\gamma}=\gamma(x_{n+2},t_{l+1};\tilde{\xi}^\ast)$ and  $\tilde{\mu}(\cdot;\tilde{\xi}^\ast)$) be the minimizing random walk and its probability measure generated by $\xi^\ast$ (resp. $\tilde{\xi}^\ast$).
Then, $u^{l+1}_{n+1}=D_xv^{l+1}_{n+2}$ satisfies
\begin{eqnarray*}\label{5value-entropy}
u^{l+1}_{n+1}&\le& E_{\mu(\cdot;\xi^\ast)}\Big[\sum_{0<k\le l+1}L^{(c)}_x(\gamma^k,t_{k-1},\xi^\ast{}^k_{m(\gamma^k)})\dt+u^0_\Delta(\gamma^0+\dx) \Big]+O(\dx),
\\ \label{5value-entropy2}
u^{l+1}_{n+1}&\ge& E_{\tilde{\mu}(\cdot;\tilde{\xi}^\ast)}\Big[\sum_{0<k\le l+1}L^{(c)}_x(\tilde{\gamma}^k,t_{k-1},\tilde{\xi}^\ast{}^k_{m(\tilde{\gamma}^k)})\dt
+u^0_\Delta(\tilde{\gamma}^0-\dx) \Big]+O(\dx),
\end{eqnarray*}
where $O(\dx)$ stands for a number of $(-\theta\dx,\theta\dx)$ with $\theta>0$ independent of $\dx$.
\end{enumerate}
Now we take the hyperbolic scaling limit.
\begin{enumerate}
\item[5.] Let $v$ be the viscosity solution of (\ref{HJ}).
Then, for each $t\in[0,T]$ we have
$$\mbox{$v_\Delta(\cdot,t)\to v(\cdot,t)$ uniformly on $\T$ as $\Delta\to 0$.}$$
In particular, we have an error estimate.
That is, there exists $\beta_2>0$ (independent of $\Delta$, $c\in[c_0,c_1]$, and the initial data $v^0\in Lip_r(\T)$) such that
$$\sup_{t\in[0,T]}\norm v_\Delta(\cdot,t)-v(\cdot,t)\norm_{C^0(\T)}\le \beta_2\sqrt{\dx}.$$
\item[6.] Let $(x,t)\in\T\times(0,T]$ be a regular point and let $\gamma^\ast:[0,t]\to\R$ be the minimizing curve for $v(x,t)$.
Let $(x_n,t_{l+1})$ be a point of $[x-2\dx,x+2\dx)\times[t-\dt,t+\dt)$ and let $\gamma_\Delta:[0,t]\to\R$ be the linear interpolation of the random walk $\gamma=\gamma(x_n,t_{l+1};\xi^\ast)$ given by the minimizing velocity field $\xi^\ast$ for $v^{l+1}_n$.
Then,
$$\gamma_\Delta\to\gamma^\ast\mbox{ uniformly on $[0,t]$ in probability as  $\Delta\to0$}.$$
In particular, the average of $\gamma_\Delta$ converges uniformly to $\gamma^\ast$ as $\Delta\to0$.
\item[7.] Let $u=v_x$ be the entropy solution of (\ref{CL}).
Then, for each regular point $(x,t)\in\T\times[0,T]$ we have
$$u_\Delta(x,t)\to u(x,t)\mbox{ as $\Delta\to0$}.$$
In particular, $u_\Delta$ converges uniformly to $u$ on $(\T\times[0,T])\setminus\Theta$, where $\Theta$ is a neighborhood of the set of points of discontinuity of $u$ with an arbitrarily small measure.
\end{enumerate}
\end{Thm}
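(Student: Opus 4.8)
Although this theorem is quoted from \cite{Soga2}, here is the route one would take. The plan is to read the difference scheme (\ref{2HJ-Delta}) as the dynamic-programming equation of a controlled random walk (items~1--3), to differentiate the resulting value function discretely via $u^k_m=D_xv^k_{m+1}$ (item~4), and then to pass to the hyperbolic-scaling limit with the help of the law of large numbers for inhomogeneous random walks in Proposition~\ref{eta} (items~5--7). For items~1--3 one first uses the Legendre duality $H(x,t,c+q)=\sup_{\xi}\{q\xi-L^{(c)}(x,t,\xi)\}$ and the substitution $\brho=\frac12+\frac12\lambda\xi$, $\bbrho=\frac12-\frac12\lambda\xi$ to put (\ref{2HJ-Delta}) into the one-step form
\[
v^{k+1}_m=\inf_{|\xi|\le\lambda^{-1}}\Big\{\brho\,v^k_{m-1}+\bbrho\,v^k_{m+1}+\dt\,L^{(c)}(x_m,t_k,\xi)\Big\}+\dt\,h(c).
\]
Since the admissible controls $\xi:G\to[-\lambda^{-1},\lambda^{-1}]$ form a finite-dimensional cube and the functional is continuous, the infimum in item~1 is attained; superlinearity (A3)$'$ of $L$ forces any minimizer into a fixed compact set $[-\lambda_1^{-1},\lambda_1^{-1}]$, uniformly in $\Delta$, once $\lambda_1$ is small enough that $\dt\,L^{(c)}$ alone exceeds the other terms for $|\xi|>\lambda_1^{-1}$. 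Unrolling the one-step principle from $k=l+1$ down to $k=0$ and reorganizing the telescoped expression as an expectation $E_{\mu(\cdot;\xi)}[\cdot]$ gives item~2, and the Euler equation $L^{(c)}_\xi(x_m,t_k,\xi)=D_xv^k_{m+1}$ for the inner minimization, together with $L^{(c)}_{\xi\xi}=L_{\xi\xi}>0$, gives uniqueness and the form $\xi^\ast{}^{k+1}_m=H_p(x_m,t_k,c+D_xv^k_{m+1})$ of item~3.

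For item~4 I would apply $D_x$ to the formula of item~2 at the node $x_{n+1}$, i.e.\ compare $v^{l+1}_{n+2}$ with $v^{l+1}_{n}$. The boundary term then contributes $u^0_\Delta$ at the endpoints of random walks issued from $x_{n+2}$ and $x_n$ (which lie $2\dx$ apart, whence the shifts $\gamma^0\pm\dx$), and the running cost contributes $\sum_kL^{(c)}_x(\gamma^k,t_{k-1},\xi^\ast{}^k)\dt$ after discretely differentiating $L^{(c)}$; the mismatch between the optimal controls at the two neighbouring nodes is controlled by the Lipschitz constant of $\xi\mapsto L^{(c)}_\xi$ on the compact range from item~1, and that discrepancy is exactly what is absorbed into the $O(\dx)$. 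Using $\xi^\ast$ for $v^{l+1}_n$ in one direction and $\tilde{\xi}^\ast$ for $v^{l+1}_{n+2}$ in the other gives the two displayed inequalities.

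For the scaling-limit statements~5--7, fix a control $\xi$ with $|\xi|\le\lambda_1^{-1}$; the straightened process $\eta(\gamma)$ has deterministic increments, and Proposition~\ref{eta} gives $E_{\mu(\cdot;\xi)}[\,|\gamma^k-\eta^k(\gamma)|\,]=O(\sqrt{\dx})$ uniformly in such $\xi$. Discretizing a continuous minimizer of (\ref{value-func}) into a control and running the scheme yields $v^{l+1}_n\le v(x,t)+O(\sqrt{\dx})$, while interpolating $\eta$ for the optimal control and using it as an admissible curve in (\ref{value-func}) yields the reverse bound; in both directions the $O(\sqrt{\dx})$ spatial displacement is converted into an $O(\sqrt{\dx})$ action error by the uniform Lipschitz and $C^1$ estimates for minimizers (Lemma~\ref{2}), and adding $\norm v^0_\Delta-v^0\norm_{C^0}\le\norm u^0\norm_{L^\infty}\cdot2\dx$ gives item~5. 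Item~6 follows from the uniqueness of the minimizing curve at a regular point by a stability ($\Gamma$-convergence) argument: $\gamma_\Delta$ is asymptotically admissible with asymptotically minimal action, so every uniform-in-probability subsequential limit equals $\gamma^\ast$. Finally item~7 combines item~4 (the variational formula for $u^{l+1}_{n+1}$), item~6 (the random walk tends to the backward characteristic $\gamma^\ast$), and the continuous representation $u(x,t)=\int_0^tL^{(c)}_x(\gamma^\ast(s),s,\gamma^\ast{}'(s))ds+u^0(\gamma^\ast(0))$; uniform convergence away from an arbitrarily thin neighborhood of the discontinuity set is obtained since regular points are dense, $u$ is continuous there, and the rarefaction-free one-sided Lipschitz bound supplies the missing equicontinuity.

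The main obstacle is the quantitative matching of the discrete and continuous variational problems under hyperbolic scaling: for genuinely inhomogeneous random walks the variance need not vanish, so one must use the sharper bound of Proposition~\ref{eta}, and then propagate the resulting $O(\sqrt{\dx})$ displacement through the action integral without losing the rate --- which is why the mesh-independent bound on the optimal controls (item~1) and mesh-independent $C^1$-bounds on minimizers must be secured beforehand. A second delicate point is the $O(\dx)$ in item~4, which requires careful bookkeeping of discrete difference quotients of value functions evaluated along two distinct near-optimal random walks.
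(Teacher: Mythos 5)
This theorem is quoted from \cite{Soga2}; the present paper states it without proof, so there is no in-paper argument to compare against line by line. Your outline does follow the stochastic--variational strategy that the paper attributes to \cite{Soga2} and re-uses elsewhere: the ``transplant a control or a curve and pay $O(\sqrt{\dx})$ via Proposition~\ref{eta}'' device you describe for item~5 is exactly the pattern of the estimates (\ref{375-2})--(\ref{375-3}) in the proof of Theorem~\ref{effective}, and your reconstruction of items~2--4 (dynamic programming one-step form via Legendre duality, strict convexity $L^{(c)}_{\xi\xi}>0$ for uniqueness, discrete differentiation of the value function producing the shifts $u^0_\Delta(\gamma^0\pm\dx)$) has the right mechanism.

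Two points are thinner than they need to be. First, in item~1 you bound the optimal controls by arguing that $\dt\,L^{(c)}$ dominates for $|\xi|>\lambda_1^{-1}$; but the $\xi$-dependent part of the one-step functional is $\dt\,\bigl(L^{(c)}(x_m,t_k,\xi)-\xi\,D_xv^k_{m+1}\bigr)$, so this coercivity argument only closes once you have a $\Delta$-independent bound on the discrete gradient $D_xv^k_{m+1}$, and that bound must itself be propagated inductively in $k$ up to $k(T)$ together with the CFL condition $|\xi^\ast|\le\lambda_1^{-1}<\lambda^{-1}$ (a discrete analogue of Lemma~\ref{2}, using (A4)). This induction is precisely where $\lambda_1$ acquires its dependence on $T$, $[c_0,c_1]$, and $r$, and it is absent from your sketch. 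Second, for the uniform part of item~7 you lean on the one-sided Lipschitz bound to supply equicontinuity; note that the paper remarks explicitly after Proposition~\ref{entropy-condition} that Theorem~\ref{SVA} was proved independently of that condition, and the rarefaction-free hypothesis is not available in Claim~7 (the $\Delta$-independent bound of Proposition~\ref{entropy-condition} does hold for $t\ge t_0>0$ without it, so your route can be repaired away from $t=0$, but as written it is a different and slightly gappy path). A minor misattribution in item~4: the $O(\dx)$ arises from the second-order Taylor remainder in $x$ when the single optimal control of one node is transplanted to the neighbouring node $2\dx$ away; since each of the two inequalities uses only one control, no ``mismatch of optimal controls'' term appears.
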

%%%%%%%%%%%%%%%%%
Note that Claims~1 and~3 give the stability condition of the Lax-Friedrichs scheme,
$$|\lambda H_p(x_m,t_{k},c+u^{k}_{m}))|<1,$$
which is called the {\it CFL~condition}.
%%%%%%%%%%%%%%%%%
\indent We next state further preliminary results for the Lax-Friedrichs scheme.
The solution operators of (\ref{2CL-Delta}) and (\ref{2HJ-Delta}) are introduced as
$$\phi^t_\Delta:L^\infty_{r,0}(\T)\ni u^0\mapsto u_\Delta(\cdot,t)\in L^\infty(\T),\,\,\,\,
\psi^t_\Delta:Lip_r(\T)\ni v^0\mapsto v_\Delta(\cdot,t)\in Lip(\T).$$
When we specify the value of $c$, we write $\phi^t_\Delta(\cdot;c), \psi^t_\Delta(\cdot;c),u_\Delta^{(c)}, u^k_m(c ),v_\Delta^{(c)}, v^k_{m+1}(c )$.
Note that we first obtain the step function $u^0_\Delta$ from $u^0$ with (\ref{u^0}) and then we map $u^0_\Delta$ to $u_\Delta(\cdot,t)$ with $\phi^t_{\Delta}$.
Similarly, we first obtain the piecewise linear function $v^0_\Delta$ from $v^0$ with (\ref{v^0}), in which $u^0=v^0_x$, and then we map $v^0_\Delta$ to $v_\Delta(\cdot,t)$ with $\psi^t_\Delta$.
%%%%%%%%%%%%%%%%%
\begin{Prop}\label{continuity-Delta}
Fix $t\in[0,T]$.
For each sequence $v^0_j\to v^0$ uniformly and  $c^j\to c$ as $j\to\infty$ ($v^0_j{}_x$ is not necessarily convergent), we have
\begin{eqnarray*}
\psi^t_\Delta(v^0_j;c^j)\to\psi^t_\Delta(v^0;c)\mbox{ uniformly,\,\,\, }\phi^t_\Delta(v^0_j{}_x;c^j)\to\phi^t_\Delta(v^0_x;c)\mbox{ in $L^1(\T)$ \,\,\,\,\,\,as $j\to\infty$.}
\end{eqnarray*}
\end{Prop}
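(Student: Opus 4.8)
The plan is to exploit that, for a fixed mesh $\Delta$, the value $v_\Delta(\cdot,t)$ (resp.\ $u_\Delta(\cdot,t)$) is obtained from the initial datum by a \emph{finite} number of applications of the explicit one-step map (\ref{2HJ-Delta}), each of which acts continuously on the finitely many grid values contained in one period; for a fixed mesh, pointwise convergence at grid points is the same as uniform (resp.\ $L^1$) convergence of the corresponding interpolants, so it suffices to track grid values.

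First I would record the continuity of the data discretization. By (\ref{v^0}), (\ref{u^0}) and $u^0=v^0_x$, one has $v^0_\Delta(x_{m+1})=v^0(x_{m+1})$ and $u^0_\Delta(x_m)=\frac{v^0(x_{m+1})-v^0(x_{m-1})}{2\dx}$ for $m$ even, so every grid value of $v^0_\Delta$ and of $u^0_\Delta$ is a point value, resp.\ a fixed difference quotient of point values, of $v^0$. Hence $v^0_j\to v^0$ uniformly already forces $v^0_{j,\Delta}\to v^0_\Delta$ and $u^0_{j,\Delta}\to u^0_\Delta$ at every grid point, \emph{without} any convergence of $v^0_j{}_x$. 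This settles $t=0$: the piecewise-linear interpolant $v_\Delta(\cdot,0)$ converges uniformly (its nodal values converge), and the step function $u_\Delta(\cdot,0)=u^0_\Delta$, having finitely many cell values per period, converges in $L^1(\T)$.

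For $t>0$ I would iterate. Written as
$$v^{k+1}_m=\frac{v^k_{m-1}+v^k_{m+1}}{2}-\dt\,H\Big(x_m,t_k,c+\frac{v^k_{m+1}-v^k_{m-1}}{2\dx}\Big)+\dt\,h(c),$$
the one-step map (\ref{2HJ-Delta}) is continuous in the grid values and in $c$ by (A1) ($H\in C^2$) and continuity of $h$; composing $k(t)$ of them, each grid value $v^{k(t)}_n$, and hence $u^{k(t)}_m=\frac{v^{k(t)}_{m+1}-v^{k(t)}_{m-1}}{2\dx}$, depends continuously on the initial grid values and on $c$. Together with the previous step this gives, at every grid point, $v^{k(t)}_n(c^j;v^0_j)\to v^{k(t)}_n(c;v^0)$ and $u^{k(t)}_m(c^j;v^0_j)\to u^{k(t)}_m(c;v^0)$; since one period contains only finitely many grid points, this upgrades to uniform convergence of $v_\Delta(\cdot,t)$ and to $L^\infty$- and hence $L^1$-convergence of $u_\Delta(\cdot,t)$, which is the claim.

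An alternative, closer to the proof of Proposition~\ref{continuity}, is to estimate $|v^{l+1}_n(c^j;v^0_j)-v^{l+1}_n(c;v^0)|$ directly from the variational formula of Theorem~\ref{SVA}(2), using the minimizing velocity field of one problem as a competitor for the other (the competitor class $\xi:G\to[-\lambda^{-1},\lambda^{-1}]$ is common to both): the resulting difference of actions is $-(c^j-c)\sum_{0<k\le l+1}\xi^k\dt + (v^0_{j,\Delta}(\gamma^0)-v^0_\Delta(\gamma^0)) + (h(c^j)-h(c))t_{l+1}$ under the expectation, whose three parts are bounded by $\lambda^{-1}T\,|c^j-c|$ (using $|\xi^k|\le\lambda^{-1}$), by $\|v^0_j-v^0\|_{C^0}$ (since $\gamma^0$ is a grid point where $v^0_\Delta=v^0$), and tend to $0$ by continuity of $h$, all uniformly in $n$; here $\lambda_1$ in Theorem~\ref{SVA} depends only on the fixed data $T,[c_0,c_1],r$, so the CFL condition is in force, and the statement for $u$ then follows from $u_\Delta=(v_\Delta)_x$ together with the convergence just obtained for $v_\Delta$. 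In either route the argument is essentially routine; the one point that genuinely needs care — and which makes the discrete statement milder than Proposition~\ref{continuity} — is that $v^0_j{}_x$ is not assumed to converge, so one must pass through the discretized $u^0_{j,\Delta}$, which converges merely because it is a fixed difference quotient of the uniformly convergent functions $v^0_j$.
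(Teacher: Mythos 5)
Your proposal is correct, but your main route is genuinely different from the paper's. The paper proves the proposition by the same device as Proposition~\ref{continuity}: it invokes the stochastic and variational representation of Theorem~\ref{SVA}, uses the minimizing velocity field of one problem as a competitor control for the other to sandwich $\psi^{t_{l+1}}_\Delta(v^0_j;c^j)(x_n)-\psi^{t_{l+1}}_\Delta(v^0;c)(x_n)$ between two expectations involving only $-(c^j-c)\xi^\ast{}^k\dt$, $v^0_{j\Delta}(\gamma^0)-v^0_\Delta(\gamma^0)$ and $(h(c^j)-h(c))t_{l+1}$, and concludes from the uniform bound on the minimizing fields; the claim for $\phi^t_\Delta$ then follows from the difference-quotient relation $\phi^t_\Delta(v^0_j{}_x;c^j)(x_m)=\bigl(\psi^t_\Delta(v^0_j;c^j)(x_{m+1})-\psi^t_\Delta(v^0_j;c^j)(x_{m-1})\bigr)/(2\dx)$ — this is precisely your ``alternative'' route, including the key observation that $v^0_{j\Delta}$ agrees with $v^0_j$ at the odd grid points so that only $\|v^0_j-v^0\|_{C^0}$ enters, never $v^0_j{}_x$. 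Your primary argument instead treats the scheme as a finite composition of explicit one-step maps that are continuous in the grid values and in $c$ (by (A1) and continuity of $h$), combined with the remark that the discretized data $v^0_{j\Delta}$, $u^0_{j\Delta}$ are point values and fixed difference quotients of $v^0_j$; since a period contains finitely many grid points for fixed $\Delta$, nodal convergence upgrades to uniform convergence of $v_\Delta$ and even $L^\infty$ (hence $L^1$) convergence of $u_\Delta$. This is more elementary — it bypasses the variational machinery and the CFL condition entirely and yields a slightly stronger mode of convergence for $u_\Delta$ — and it suffices for how the proposition is used later (always at fixed mesh). What the paper's route buys in exchange is a quantitative modulus, $\lambda^{-1}T|c^j-c|+\|v^0_j-v^0\|_{C^0}+|h(c^j)-h(c)|T$, uniform in $n$ and in the mesh, in exact parallel with the continuum Proposition~\ref{continuity}.
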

%%%%%%%%%%%%%%%%%
\begin{proof}[{\bf Proof.}]
It is sufficient to show that $\psi^{t_{l+1}}_\Delta(v^0_j;c^j)(x_n)\to\psi^{t_{l+1}}_\Delta(v^0;c)(x_n)$ uniformly with respect to $x_n$ as $j\to\infty$.
Using the stochastic and variational representation, we have
\begin{eqnarray*}
\psi^{t_{l+1}}_\Delta(v^0;c)(x_n)&=&E_{\mu(\cdot;\xi^\ast)}\Big[\sum_{0<k\le l+1}L(\gamma^k,t_{k-1},\xi^\ast{}^k_{m(\gamma^k)})-c\xi^\ast{}^k_{m(\gamma^k)}\dt +v^0_\Delta(\gamma^0)\Big]\\
&&+h(c)t_{l+1},\\
\psi^{t_{l+1}}_\Delta(v^0_j;c^j)(x_n)&=&E_{\mu(\cdot;\xi_j^\ast)}\Big[\sum_{0<k\le l+1}L(\gamma^k,t_{k-1},\xi_j^\ast{}^k_{m(\gamma^k)})-c^j\xi_j^\ast{}^k_{m(\gamma^k)}\dt +v^0_{j\Delta}(\gamma^0)\Big]\\
&&+h(c^j)t_{l+1},
\end{eqnarray*}
where $\xi^\ast,\xi^\ast_j$ are minimizing velocity fields.
Hence, by the stochastic and variational representation again, we have
\begin{eqnarray*}
\psi^{t_{l+1}}_\Delta(v^0_j;c^j)(x_n)&-&\psi^{t_{l+1}}_\Delta(v^0;c)(x_n)\\
&\le&E_{\mu(\cdot;\xi^\ast)}\Big[\sum_{0<k\le l+1}-(c^j-c)\xi^\ast{}^k_{m(\gamma^k)}\dt +v^0_{j\Delta}(\gamma^0)-v^0_\Delta(\gamma^0)\Big]\\
&&+(h(c^j)-h(c))t_{l+1},\\
\psi^{t_{l+1}}_\Delta(v^0_j;c^j)(x_n)&-&\psi^{t_{l+1}}_\Delta(v^0;c)(x_n)\\
&\ge&E_{\mu(\cdot;\xi_j^\ast)}\Big[\sum_{0<k\le l+1}-(c^j-c)\xi_j^\ast{}^k_{m(\gamma^k)}\dt +v^0_{j\Delta}(\gamma^0)-v^0_\Delta(\gamma^0)\Big]\\
&&+(h(c^j)-h(c))t_{l+1}.
\end{eqnarray*}
Since $\xi^\ast,\xi^\ast_j$ are uniformly bounded, we have demonstrated the assertion.

The second convergence follows from the first one and the following relation:
$$\phi^t_\Delta(v^0_j{}_x;c^j)(x_m)= \frac{\psi^t_\Delta(v^0_j;c^j)(x_{m+1})-\psi^t_\Delta(v^0_j;c^j)(x_{m-1})}{2\dx}.$$
\end{proof}
%%%%%%%%%%%%%%%%%
\indent We now show details of the one-sided Lipschitz condition on $u^k_m$, or equivalently the semiconcave property of $v^k_{m+1}$; namely, we obtain the $\Delta$-independent upper boundedness of
$$E^k_\Delta:=\max_{m}\frac{u^k_{m+2}-u^k_m}{2\dx}=\max_m\frac{v^k_{m+3}+v^k_{m-1}-2v^k_{m+1}}{(2\dx)^2}.$$
This leads to the entropy condition on $u(\cdot,t)$ and semiconcavity of $v(\cdot,t)$.
The one-sided Lipschitz condition on $u^k_m$ is essential in the standard $L^1$-framework of difference approximation, because this condition yields $\Delta$-independent boundedness of the total variation of $u_\Delta(\cdot,t)$ and then $L^1$-convergence of the approximation follows with the aid of the compactness of functions of bounded variation.
{\it We remark that Theorem~\ref{SVA} was proved independently of the condition and without such compactness.}
In the sections below, we use the one-sided Lipschitz condition on $u^k_m$ for different purposes.

If we assume that $v^0$ is semiconcave, it is easy to find an upper bound for $E^k_\Delta$ through the semiconcavity of $v^{k}_{m+1}$ due to its variational structure.
However, we would like to avoid that assumption and know about the $k$-dependence of the upper bound.
Therefore, we use a direct method similar to that of Lemma~2 in~\cite{Oleinik}.
The direct method is available for arbitrary $T>0$, because we already know by Theorem~\ref{SVA} that the difference solutions are bounded up to $T$.
We introduce the following notation with the $\lambda_1$ in Theorem \ref{SVA}:
\begin{eqnarray*}
&&u^\ast:=\sup_{x,t\in \T,c\in[c_0,c_1],|\xi|\le\lambda_1^{-1}}|L^{(c)}_\xi(x,t,\xi)|\,\mbox{ (note that $|u^k_m|\le u^\ast$)},\\
&&H_{xx}^\ast:=\sup_{x,t\in \T,c\in[c_0,c_1],|u|\le u^\ast}|H_{xx}(x,t,c+u)|,\,\,\,H_{xp}^\ast:=\sup_{x,t\in \T,c\in[c_0,c_1],|u|\le u^\ast}|H_{xp}(x,t,c+u)|,\\
&&H_{pp}^\ast:=\inf_{x,t\in \T,c\in[c_0,c_1],|u|\le u^\ast}|H_{pp}(x,t,c+u)|\mbox{ \,\,\,($H^\ast_{pp}>0$ due to (A2))},\\
&&\eta:=\max\{2H_{xp}^\ast+H_{pp}^\ast, \frac{1}{2}H^\ast_{pp}+H^\ast_{xx}\},\quad \,\,\,\,\,\,\,\,\,\,\,\,\,E^\ast:=\frac{2H^\ast_{xp}}{H^\ast_{pp}}+\sqrt{4\Big(\frac{H^\ast_{xp}}{H^\ast_{pp}}\Big)^2+\frac{2H^\ast_{xx}}{H_{pp}^\ast}}.
\end{eqnarray*}
Before giving details, we summarize our strategy as follows:
We estimate $E^{k+1}_\Delta$ from $E^k_\Delta$ by using the difference equation.
We find that each $E^{k+1}_\Delta-E^k_\Delta$ is bounded from above by $P(E^k_\Delta)$, where $P(y)$ is a concave parabola whose zero point on the right-hand side is $E^\ast$; i.e., $P(y)>0$ for $0\le y<E^\ast$, $P(E^\ast)=0$, and $P(y)<0$ for $y>E^\ast$  (see (\ref{289}) below).
Hence, if $E^k_\Delta>E^\ast$ (resp. $E^k_\Delta<E^\ast$), then $E^{k+1}_\Delta$ decreases by at least $P(E^k_\Delta)<0$ (resp. increases by at most $P(E^k_\Delta)>0$) and can remain near $E^\ast$ for large $k\le k(T)$.
If $E^0_\Delta$ is very large, $E^k_\Delta$ decays rapidly at first in a way similar to that of solutions to $w'(s)=-(w(s))^2$, where $w(s)\sim 1/s$.
%%%%%%%%%%%%%%%%%%%
\begin{Prop}\label{entropy-condition}
Let $\lambda_1>0$ be that of Theorem~\ref{SVA}.
Suppose that $\Delta=(\dx,\dt)$ satisfies $\lambda=\dt/\dx<\lambda_1$, $\dt<\min\{(2\eta)^{-1},(E^\ast H^\ast_{pp}+2H^\ast_{xp})^{-1}\}$,
\begin{eqnarray}\label{288}
\sup_{x,t\in \T,c\in[c_0,c_1],|u|\le u^\ast}\lambda (|H_p(x,t,c+u)|+H^\ast_{xp}\cdot2\dx)<1,\quad \lambda\le\frac{1-2H^\ast_{xp}\dt}{rH_{pp}^\ast+(1+H^\ast_{pp})\dx}.
\end{eqnarray}
Then, the following hold:
\begin{enumerate}
 \item For $1\le k\le k(T)$ we have
$$E^k_\Delta=\max_{m}\frac{u^k_{m+2}-u^k_m}{2\dx}\le \frac{2e^{\eta t_k}}{H^\ast_{pp}}\frac{1}{t_k}\mbox{ \,\,\,($t_k=k\dt$)}.$$
 \item If $E^0_\Delta\le E^\ast$, we have $E^k_\Delta\le E^\ast$ for $1\le k\le k(T)$.
 \item If $k> k(\eta^{-1})$, we have $\dis E^k_\Delta\le \frac{4e\eta}{H^\ast_{pp}}$.
 \item If $u^k_m$ is extended to $k\to\infty$ with $|u^k_m|\le u^\ast$, we have $\dis \limsup_{k\to\infty}E^k_\Delta\le E^\ast.$
\end{enumerate}
\end{Prop}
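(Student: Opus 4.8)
The heart of the matter is a one-step recursion for $E^k_\Delta$, from which all four claims follow by elementary analysis of a scalar difference inequality; this is exactly the strategy already outlined above, and the plan is to carry it out as follows.

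\emph{Step 1: the one-step estimate.} Fix $k$ with $1\le k< k(T)$ and pick an index $m$ at which $E^{k+1}_\Delta$ is attained. Substituting the Lax--Friedrichs scheme (\ref{2CL-Delta}) (equivalently (\ref{2HJ-Delta}), via $u^k_j=D_xv^k_{j+1}$) into $E^{k+1}_\Delta=\frac{u^{k+1}_{m+2}-u^{k+1}_m}{2\dx}$, the numerical-viscosity part produces the average $\tfrac12(\delta^k_{m-1}+\delta^k_{m+1})$ of the two neighboring one-sided slopes $\delta^k_j:=\frac{u^k_{j+2}-u^k_j}{2\dx}\le E^k_\Delta$, while the flux part is a second difference of $H$ along the grid. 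Taylor-expanding that second difference to second order is legitimate because $H$ is $C^2$ (A1) and $|u^k_j|\le u^\ast$ by Theorem~\ref{SVA}; the first-order ("transport") part comes with a coefficient that the CFL-type condition (the first inequality in (\ref{288})) forces to be nonnegative, so it is absorbed after replacing $\delta^k_{m\pm1}$ by $E^k_\Delta$, whereas the genuinely second-order part supplies a strictly negative quadratic term bounded below by $H^\ast_{pp}$ together with linear and constant remainders controlled by $H^\ast_{xp}$ and $H^\ast_{xx}$. Collecting terms yields
$$E^{k+1}_\Delta-E^k_\Delta\le\dt\,P(E^k_\Delta),\qquad P(y):=-\tfrac{H^\ast_{pp}}{2}y^2+2H^\ast_{xp}y+H^\ast_{xx},$$
a concave parabola whose positive zero is exactly $E^\ast$ (that is the definition of $E^\ast$). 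When $E^k_\Delta$ is so large that the maximization of the quadratic over $\{\delta^k_{m\pm1}\le E^k_\Delta\}$ falls past the interior vertex, one instead gets the crude, $E^k_\Delta$-independent bound $E^{k+1}_\Delta\le\frac{1}{2H^\ast_{pp}\dt}+O(\dt)$; the second inequality in (\ref{288}) enters only to control the first step, where $E^0_\Delta$ may be as large as $r/\dx$.

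\emph{Step 2: Claims 2 and 4.} Consider the scalar map $g(y):=y+\dt P(y)$. Since $\dt<(E^\ast H^\ast_{pp}+2H^\ast_{xp})^{-1}$, the vertex of $g$ lies strictly to the right of $E^\ast$, so $g$ is nondecreasing on $(-\infty,E^\ast]$ with $g(E^\ast)=E^\ast$, hence maps $(-\infty,E^\ast]$ into itself. If $E^0_\Delta\le E^\ast$, induction through $E^{k+1}_\Delta\le g(E^k_\Delta)$ (noting $E^k_\Delta$ then stays in the regime where the parabola estimate, not the crude one, applies) gives Claim~2. For Claim~4, $P(y)<0$ for $y>E^\ast$ makes $E^{k+1}_\Delta<E^k_\Delta$ whenever $E^k_\Delta>E^\ast$; thus either $E^k_\Delta$ drops to $\le E^\ast$ at some step and stays there, or it decreases monotonically to a fixed point $\ge E^\ast$, necessarily $E^\ast$. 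Either way $\limsup_{k\to\infty}E^k_\Delta\le E^\ast$.

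\emph{Step 3: Claims 1 and 3.} Compare $\{E^k_\Delta\}$ with the solution of the Riccati ODE $\phi'=P(\phi)$, $\phi(0^+)=+\infty$. Linearizing via $\phi=\frac{2}{H^\ast_{pp}}(\log w)'$ turns it into $w''=2H^\ast_{xp}w'+\tfrac12 H^\ast_{pp}H^\ast_{xx}w$ with $w(0)=0$, so $\phi(t)=\frac{2}{H^\ast_{pp}}\cdot\frac{r_+e^{r_+t}-r_-e^{r_-t}}{e^{r_+t}-e^{r_-t}}$ with $r_\pm=H^\ast_{xp}\pm\tfrac12\sqrt{4(H^\ast_{xp})^2+2H^\ast_{pp}H^\ast_{xx}}$; one checks $\phi(t)\sim\frac{2}{H^\ast_{pp}t}$ as $t\to0^+$, $\phi(t)\to E^\ast$ as $t\to\infty$, and — with $\eta$ chosen as in the statement precisely to dominate the relevant exponential rates — $\phi(t)\le\frac{2e^{\eta t}}{H^\ast_{pp}t}$. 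A discrete comparison (using the monotonicity of $g$ and the crude bound to start with $E^1_\Delta\le\phi(\dt)$) then propagates $E^k_\Delta\le\phi(t_k)\le\frac{2e^{\eta t_k}}{H^\ast_{pp}t_k}$ for $1\le k\le k(T)$, which is Claim~1. Claim~3 follows by applying Claim~1 at $k=k(\eta^{-1})+1$ (there $t_k\in(\eta^{-1},\eta^{-1}+\dt]$, and $\eta\dt<\tfrac12$ pushes the bound below $\frac{4e\eta}{H^\ast_{pp}}$) and then observing that $[0,\frac{4e\eta}{H^\ast_{pp}}]$ is forward-invariant under $g$, since $\frac{4e\eta}{H^\ast_{pp}}\ge E^\ast$ (so $P\le0$ there) and $g$ is increasing up to its vertex, which lies past $\frac{4e\eta}{H^\ast_{pp}}$ under the standing hypotheses on $\dt$.

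The delicate part is Step~1: tracking the second-order Taylor remainders of the flux difference, cleanly separating the transport contribution (killed by the CFL condition) from the dissipative one, and arranging the constants so that the resulting parabola has $E^\ast$ as its right zero and the crude bound dovetails with Claim~1 at $k=1$. Once the recursion $E^{k+1}_\Delta-E^k_\Delta\le\dt\,P(E^k_\Delta)$ together with the crude fallback is in hand, Claims~1--4 are routine difference-/ODE-comparison arguments.
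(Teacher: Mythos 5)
Your Steps 1 and 2 are essentially the paper's own argument: the same Taylor expansion of the flux differences, with the first inequality in (\ref{288}) making the two convex coefficients positive, yields the recursion $E^{k+1}_\Delta\le E^k_\Delta+\dt P(E^k_\Delta)$ with $P(y)=-\tfrac{H^\ast_{pp}}{2}y^2+2H^\ast_{xp}y+H^\ast_{xx}$, whose right zero is $E^\ast$ (this is the paper's (\ref{289})), and Claims 2 and 4 follow from the same dichotomy (the map $y\mapsto y+\dt P(y)$ is nondecreasing up to a vertex beyond $E^\ast$, and $P<0$ above $E^\ast$). Where you genuinely diverge is Claim 1: the paper follows Oleinik, setting $V^k=E^k_\Delta+1$, $W^k=(1-\eta\dt)^kV^k$, and comparing with the pure quadratic ODE $w'=-\tfrac{H^\ast_{pp}}{2}w^2$, which is where its factor $e^{\eta t_k}$ enters; you compare directly with the Riccati supersolution $\phi'=P(\phi)$, $\phi(0^+)=+\infty$. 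That route is sound and arguably cleaner: with $D:=\sqrt{(H^\ast_{xp})^2+\tfrac12H^\ast_{pp}H^\ast_{xx}}$ your formula reduces to $\phi(t)=\tfrac{2}{H^\ast_{pp}}\bigl(H^\ast_{xp}+D\coth(Dt)\bigr)$, which decreases to $\tfrac{2r_+}{H^\ast_{pp}}=E^\ast$, and your bound $\phi(t)\le\tfrac{2e^{\eta t}}{H^\ast_{pp}t}$ does hold via $x\coth x\le 1+x$ together with $r_+\le\eta$ (a short check from the definition of $\eta$), since then $\tfrac{H^\ast_{pp}}{2}t\,\phi(t)\le 1+r_+t\le e^{\eta t}$; the same $\phi$ also hands you Claim 4 and the constant $E^\ast$ at once.

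Two details in the sketch need repair. First, the discrete comparison $E^k_\Delta\le\phi(t_k)$ needs the parabola recursion at every step, hence $E^k_\Delta$ below the monotonicity threshold $(1-2H^\ast_{xp}\dt)/(H^\ast_{pp}\dt)$; the crude vertex bound by itself does not obviously bridge the early-time window in all parameter regimes (when $H^\ast_{xp}\dt$ is not small, the crude bound can exceed that threshold while $\phi(t_{k+1})$ has already dropped below the crude value). The clean fix is exactly the paper's use of the second inequality in (\ref{288}): it gives $E^0_\Delta\le r/\dx\le(1-2H^\ast_{xp}\dt)/(H^\ast_{pp}\dt)$, and together with $E^\ast<$ threshold and the dichotomy this bound propagates to all $k$, after which your comparison (starting from $E^1_\Delta\le\phi(\dt)$ and using $\phi(t_{k+1})\ge\phi(t_k)+\dt P(\phi(t_k))$, valid because $s\mapsto P(\phi(s))$ is increasing) goes through. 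Second, in Claim 3 your justification of the forward invariance of $[0,\tfrac{4e\eta}{H^\ast_{pp}}]$ --- that the vertex of $g$ lies past $\tfrac{4e\eta}{H^\ast_{pp}}$ --- is not implied by the standing hypotheses: the vertex sits at $(1+2H^\ast_{xp}\dt)/(H^\ast_{pp}\dt)$, while $4e\eta\dt$ may be as large as $2e>1$ under $\dt<(2\eta)^{-1}$. The invariance is nevertheless true by your own Step 2 dichotomy (if $E^k_\Delta\le E^\ast$ then $E^{k+1}_\Delta\le E^\ast$; if $E^\ast<E^k_\Delta\le 4e\eta/H^\ast_{pp}$ then $E^{k+1}_\Delta<E^k_\Delta$), which is in fact how the paper concludes Claim 3; replace the vertex argument by this and Step 3 is complete.
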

%%%%%%%%%%%%%%%%%%%
\begin{proof}[{\bf Proof.}]
Using the difference equation and Taylor's formula, we obtain an estimate of $E^{k+1}_\Delta$ from $E^k_\Delta$.
For brevity, the remainders in Taylor's formula are denoted by $H_{pp}$, $H_{xx}$, and $H_{xp}$, which satisfy
$$H_{pp}\ge H^\ast_{pp},\,\,\,|H_{xx}|\le H^\ast_{xx},\,\,\,|H_{xp}|\le H^\ast_{xp}.$$
Set $z^{k}_m:=u^k_{m+2}-u^k_m$.
Then, we have
\begin{eqnarray*}
z^{k+1}_{m+1}&=&\frac{z^k_m+z^k_{m+2}}{2}-\frac{\dt}{2\dx}
\{H(x_{m+4},t_k,c+u^k_{m+4})-H(x_{m+2},t_k,c+u^k_{m+4})\\
&&+H(x_{m+2},t_k,c+u^k_{m+4})-H(x_{m+2},t_k,c+u^k_{m+2})\\
&& +H(x_{m},t_k,c+u^k_{m})-H(x_{m+2},t_k,c+u^k_{m})\\
&&+H(x_{m+2},t_k,c+u^k_{m})-H(x_{m+2},t_k,c+u^k_{m+2})\}\\
&=& (\frac{1}{2}+\frac{\lambda}{2}H_p(x_{m+2},t_k,c+u^k_{m+2}))z^k_m+
(\frac{1}{2}-\frac{\lambda}{2}H_p(x_{m+2},t_k,c+u^k_{m+2}))z^k_{m+2}\\
&&-\frac{\dt}{2\dx}\{ (H_x(x_{m+2},t_k,c+u^k_{m+4})-H_x(x_{m+2},t_k,c+u^k_{m}))(2\dx)\\
&&\qquad\qquad+\frac{1}{2}H_{pp}\cdot(z^k_{m+2})^2+\frac{1}{2}H_{pp}\cdot(z^k_m)^2+\frac{1}{2}H_{xx}\cdot(2\dx)^2 +\frac{1}{2}H_{xx}\cdot(2\dx)^2 \}\\
&=& \{\frac{1}{2}+\frac{\lambda}{2}H_p(x_{m+2},t_k,c+u^k_{m+2})-\frac{\lambda}{2}H_{xp}\cdot2\dx\}z^k_m\\
&&+\{\frac{1}{2}-\frac{\lambda}{2}H_p(x_{m+2},t_k,c+u^k_{m+2})-\frac{\lambda}{2}H_{xp}\cdot2\dx\}z^k_{m+2}\\
&&-\frac{\dt}{2\dx}\{ \frac{1}{2}H_{pp}\cdot(z^k_{m+2})^2+\frac{1}{2}H_{pp}\cdot(z^k_m)^2+\frac{1}{2}H_{xx}\cdot(2\dx)^2 +\frac{1}{2}H_{xx}\cdot(2\dx)^2 \}.\\
\end{eqnarray*}
By the first inequality in (\ref{288}), it follows that
$$\{\frac{1}{2}\pm\frac{\lambda}{2}H_p(x_{m+2},t_k,c+u^k_{m+2})-\frac{\lambda}{2}H_{xp}\cdot2\dx\}>0.$$
Hence, setting $\tilde{z}^k_m:=\max\{z^k_m,z^k_{m+2}\}$, we obtain
\begin{eqnarray*}
z^{k+1}_{m+1}&\le& (1-2H_{xp}\dt)\tilde{z}^k_m+H^\ast_{xx}\cdot2\dx\dt-\frac{H^\ast_{pp}}{2}\frac{\dt}{2\dx}(\tilde{z}^k_m)^2,\\
\frac{z^{k+1}_{m+1}}{2\dx}&\le&(1-2H_{xp}\dt) \frac{\tilde{z}^k_m}{2\dx}+H^\ast_{xx}\dt-\frac{H^\ast_{pp}}{2}\dt(\frac{\tilde{z}^k_m}{2\dx})^2.
\end{eqnarray*}
Note that $g(y):=(1-2H_{xp}\dt)y+H^\ast_{xx}\dt-(\frac{H^\ast_{pp}}{2}\dt )y^2$ is monotonically increasing if
$$y\le \frac{1-2H_{xp}\dt}{H_{pp}^\ast\dt}.$$
From the second inequality in (\ref{288}) it follows that $\lambda\le(1-2H^\ast_{xp}\dt)/(rH_{pp}^\ast+\dx)$ and hence
$$E^0_\Delta\le \frac{2r}{2\dx}\le \frac{1-2H_{xp}^\ast\dt}{H_{pp}^\ast\dt}\le \frac{1-2H_{xp}\dt}{H_{pp}^\ast\dt}$$
for all initial data in $L^\infty_{r,0}(\T)$.
Suppose that $E^k_\Delta\le (1-2H_{xp}^\ast\dt)/(H_{pp}^\ast\dt)$.
Then, we obtain
\begin{eqnarray}\label{289}
E^{k+1}_\Delta\le E^k_\Delta+P(E^k_\Delta),\,\,\,P(y):=-\dt(\frac{H^\ast_{pp}}{2}y^2-2H^\ast_{xp}y-H^\ast_{xx}).
\end{eqnarray}
From $\dt<(E^\ast H^\ast_{pp}+2H^\ast_{xp})^{-1}$ it follows that $E^\ast< (1-2H_{xp}^\ast\dt)/(H_{pp}^\ast\dt)$, and from $\dt<(2\eta)^{-1}$ it follows that $|y-E^\ast|\ge |P(y)|$ for all $0\le y\le (1-2H_{xp}^\ast\dt)/(H_{pp}^\ast\dt)$.
Hence, we have two cases:
\begin{itemize}
\item[(1)] If $E^k_\Delta \le E^\ast$, we may have $E^{k+1}_\Delta\ge E^k_\Delta$, but we certainly have $E^{k+1}_\Delta\le E^k_\Delta+P(E^k_\Delta)\le E^\ast$.
\item[(2)] If $E^\ast < E^k_\Delta$, we have $E^{k+1}_\Delta<E^k_\Delta$.
\end{itemize}
\noindent Therefore, we have $E^{k+1}_\Delta\le (1-2H_{xp}^\ast\dt)/(H_{pp}^\ast\dt)$ and, by induction, it follows that $E^k_\Delta\le (1-2H_{xp}^\ast\dt)/(H_{pp}^\ast\dt)$ for all $0\le k\le k(T)$.
Thus, (\ref{289}) holds for all $0\le k< k(T)$.
It is now easy to verify that 1) if $E^0_\Delta\le E^\ast$, then $E^k_\Delta$ may increase but never exceed $E^\ast$, and 2) if $E^0_\Delta>E^\ast$, then the $E^k_\Delta$ are bounded from above by a monotonically decreasing sequence. Claim 4 is also clear. 

Now we follow Lemma~2 in~\cite{Oleinik}.
Set $V^k:=E^k_\Delta+1\ge1$.
Then, by (\ref{289}) we have
$$V^{k+1}\le(1+\eta\dt)V^k-\frac{H^\ast_{pp}}{2}\dt(V^k)^2$$
We set $W^k:=(1-\eta\dt)^kV^k$ for $k\ge0$ ($1-\eta\dt>0$ holds since $\dt<(2\eta)^{-1}$).
Then, for $k\ge1$ we have
\begin{eqnarray*}
W^{k+1}&\le& (1-\eta\dt)(1+\eta\dt)W^k-\frac{H^\ast_{pp}}{2}\dt (W^k)^2(1-\eta\dt)^{-k+1}\\
&\le& W^k-\frac{H^\ast_{pp}}{2}\dt (W^k)^2.
\end{eqnarray*}
Consider $w'(t)=-\frac{H^\ast_{pp}}{2} (w(t))^2$, with $w(0)=w^0:=2/(H^\ast_{pp}\dt)$.
The solution satisfies
$$w(t)=\frac{1}{\frac{H^\ast_{pp}}{2}t+\frac{1}{w^0}}\le \frac{2}{H^\ast_{pp}t}.$$
We can show that $W^k\le w(k\dt)$ for $k\ge 1$ by noting that $w(\dt)=1/(H^\ast_{pp}\dt)$ and $W^1=(1-\eta\dt)(E^1_\Delta+1)\le r/\dx+1$.
From the second inequality in (\ref{288}) it follows that $\lambda\le 1/\{(r+\dx)H^\ast_{pp}\}$ and hence that $W^1\le w(\dt)$.
Suppose that $W^k\le w(k\dt)$ for some $k\ge1$.
Then, since $g(y):=y-\frac{H^\ast_{pp}\dt}{2}y^2$ is monotonically increasing for $y\le 1/(H^\ast_{pp}\dt)$,  $w(k\dt)\le 1/(H^\ast_{pp}\dt)$, and $w''>0$,  we have
\begin{eqnarray*}
W^{k+1}&\le& W^k-\frac{H^\ast_{pp}\dt}{2}(W^k)^2\le w(k\dt)-\frac{H^\ast_{pp}\dt}{2}(w(k\dt))^2\\
&=&w(k\dt)+\dt w'(k\dt)=w(k\dt+\dt)-\frac{1}{2}w''(k\dt+\theta\dt)\cdot(\dt)^2\\
&\le&w((k+1)\dt)\mbox{ ($\theta\in(0,1)$)}.
\end{eqnarray*}
Thus, we obtain
$$E^k_\Delta\le(1-\eta\dt)^{-k}\frac{2}{H^\ast_{pp}k\dt}\le(1-\eta\dt)^{-\frac{\eta k\dt}{\eta\dt}}\frac{2}{H^\ast_{pp}k\dt}\le \frac{2e^{\eta t_k}}{H^\ast_{pp}}\frac{1}{t_k}.$$
\indent Setting $f(t):=\frac{2e^{\eta t}}{H^\ast_{pp}}\frac{1}{t}$,
the minimum of $f$ becomes $f(\eta^{-1})=\frac{2e\eta}{H^\ast_{pp}}$, which is greater than $E^\ast$.
Therefore, due to Cases~(1) and~(2), the $E^k_\Delta$ are bounded from above by $f(t_k)(>E^\ast)$ for $k\le k(\eta^{-1})$ and never exceed $f(\eta^{-1}-\dt)\le \frac{4e\eta}{H^\ast_{pp}}$ for $k>k(\eta^{-1})$.
This demonstrates the proposition.
\end{proof}
%%%%%%%%%%%%%%%%%%%
\setcounter{section}{2}
\setcounter{equation}{0}
\section{Time-Global Stability and Large-Time Behavior}
%%%%%%%%%%%%%%%%%%%%%%%
We prove time-global stability of the Lax-Friedrichs scheme with a fixed mesh size.
Then, we show the large-time behavior of the scheme in which each difference solution falls into a time periodic state with unit period. 
Each time periodic state corresponds to a space-time periodic difference solution. 
There arises the notion of the effective Hamiltonian of (\ref{HJ-Delta}).
%%%%%%%%%%%%%%%%%%%%%%%
\subsection{Time-Global Stability}
The main result of this section is the following theorem.
%%%%%%%%%%%%%%%%%%%%%%%
\begin{Thm}\label{Main-stability}
There exist $\lambda_1>0$ and $\delta>0$ such that, if $\Delta=(\dx,\dt)$ satisfies $0<\lambda_0\le \lambda=\dt/\dx<\lambda_1$ and $\dx\le\delta$, the Lax-Friedrichs scheme starting from any $u^0\in L^\infty_{r,0}(\T)$ succeeds up to an arbitrary time index and satisfies the CFL~condition
$$|H_p(x_m,t_k,c+u^k_m)|\le \lambda_1^{-1}<\lambda^{-1}\mbox{ \,\,\,for all $m\in\Z$ and $k\in\Z_+$.}$$
\end{Thm}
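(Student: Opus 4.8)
The plan is to obtain an $r$-independent bound on the genuine entropy solution at $t=1$ from Proposition~\ref{boundedness}, transfer it to the difference entropy solution by a \emph{uniform-in-data} approximation estimate, and then iterate over unit time intervals using that $H(x,t,\cdot)$ is $1$-periodic in~$t$. Concretely, set $\brho:=\beta_1(1)$, so by Proposition~\ref{boundedness} one has $\norm\phi^1(u^0;c)\norm_{L^\infty}\le\brho$ for every $c\in[c_0,c_1]$ and every $u^0\in L^\infty_{r,0}(\T)$ \emph{regardless of~$r$}; by $1$-periodicity this iterates to $\norm\phi^n(u^0;c)\norm_{L^\infty}\le\brho$ for all integers $n\ge1$. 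Put $\bar r:=\max\{r,\brho+1\}$ and apply Theorem~\ref{SVA} and Proposition~\ref{entropy-condition} with $T=1$ and radius $\bar r$; this produces $\lambda_1=\lambda_1(1,[c_0,c_1],\bar r)$ (shrunk if necessary so that the hypotheses of Proposition~\ref{entropy-condition} hold in the admissible range of $\lambda$), a mesh threshold $\delta_0$, and constants $u^\ast,\eta,H^\ast_{pp}$, all depending only on $[c_0,c_1]$, $r$ and $H$, never on $c$ or the data. Throughout we take $\dx\le\delta$ for some $\delta\le\delta_0$ fixed below and $0<\lambda_0\le\lambda<\lambda_1$.

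The core step is: for every $c\in[c_0,c_1]$ and every $u^0\in L^\infty_{\bar r,0}(\T)$, the scheme succeeds on $[0,1]$ and $\norm u_\Delta^{(c)}(\cdot,1)\norm_{L^\infty}\le\bar r$. Success on $[0,1]$ with $|u^k_m|\le u^\ast$ and the CFL bound is Theorem~\ref{SVA}. Proposition~\ref{entropy-condition}(1) at $t_k=1$ gives the one-sided Lipschitz estimate $E^{k(1)}_\Delta\le C^\ast:=2e^{\eta}/H^\ast_{pp}$; since $\sum_m u^k_m\cdot 2\dx=0$ is conserved, a walk around the torus using this one-sided bound yields $\norm u_\Delta^{(c)}(\cdot,1)\norm_{L^\infty}\le C^\ast$ and $\mathrm{TV}\bigl(u_\Delta^{(c)}(\cdot,1)\bigr)\le 2C^\ast$. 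It remains to see that $\norm u_\Delta^{(c)}(\cdot,1)-u^{(c)}(\cdot,1)\norm_{L^1}\to0$ as $\Delta\to0$ \emph{uniformly} in $c$ and $u^0$. Here Theorem~\ref{SVA}(5) supplies the uniform bound $\delta_\Delta:=\norm v_\Delta^{(c)}(\cdot,1)-v^{(c)}(\cdot,1)\norm_{C^0}\le\beta_2\sqrt{\dx}$, and $u^{(c)}(\cdot,1)=v^{(c)}_x(\cdot,1)$ inherits $\mathrm{TV}\le 2C^\ast$ from $u_\Delta^{(c)}(\cdot,1)$ via the a.e.\ convergence of Theorem~\ref{SVA}(7) (equivalently, directly from the variational formula and Lemma~\ref{2}). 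Since $h:=(v_\Delta^{(c)}-v^{(c)})_x(\cdot,1)$ has $\mathrm{TV}(h)\le 4C^\ast$, zero mean, and primitive of $C^0$-size $O(\delta_\Delta)$, a standard interpolation inequality $\norm h\norm_{L^1}\le C_0\,\mathrm{TV}(h)^{1/2}\delta_\Delta^{1/2}$ gives $\norm u_\Delta^{(c)}(\cdot,1)-u^{(c)}(\cdot,1)\norm_{L^1}\le c_\ast\,\dx^{\frac14}$ with $c_\ast$ independent of $c$ and $u^0$. Finally, since $u^{(c)}(\cdot,1)$ takes values in $[-\brho,\brho]$ and $u_\Delta^{(c)}(\cdot,1)$ has one-sided Lipschitz constant $\le C^\ast$, if $u_\Delta^{(c)}(x_0,1)=\brho+s$ with $s>0$ then $u_\Delta^{(c)}(\cdot,1)-u^{(c)}(\cdot,1)\ge s/2$ on an interval of length $\min\{s/(2C^\ast),1\}$ immediately to the left of $x_0$, so $s^2\le 4C^\ast\norm u_\Delta^{(c)}(\cdot,1)-u^{(c)}(\cdot,1)\norm_{L^1}$; the bound from below is symmetric, whence $\norm u_\Delta^{(c)}(\cdot,1)\norm_{L^\infty}\le\brho+2\sqrt{C^\ast c_\ast}\,\dx^{\frac18}\le\brho+1\le\bar r$ once $\delta$ is small enough.

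To iterate, note that $H(x,t,\cdot)$ is $1$-periodic in $t$ and the time-$1$ grid line is compatible with the parity of the scheme's grids, so $u_\Delta^{(c)}(\cdot,1)\in L^\infty_{\bar r,0}(\T)$ may be fed back as initial data and the core step reapplied; inductively $u_\Delta^{(c)}(\cdot,n)\in L^\infty_{\bar r,0}(\T)$ for all integers $n\ge0$. Hence the scheme succeeds up to an arbitrary time index, and on each $[n,n+1]$ Theorem~\ref{SVA}(1),(3) give $|H_p(x_m,t_k,c+u^k_m)|=|\xi^\ast|\le\lambda_1^{-1}<\lambda^{-1}$ for all $m,k$, which is the asserted CFL bound. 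I expect the main obstacle to be precisely the uniform-in-data $L^1$ estimate in the core step: Theorem~\ref{SVA} gives only pointwise-a.e.\ convergence of $u_\Delta$ with no rate uniform over $u^0$, so the estimate must be routed through the uniform $C^0$-estimate for $v_\Delta$ of Theorem~\ref{SVA}(5) together with the mesh-independent total-variation bound coming from the one-sided Lipschitz condition of Proposition~\ref{entropy-condition}.
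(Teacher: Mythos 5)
Your proposal is correct and shares the paper's overall architecture: the data-independent bound $\beta_1(1)$ on the exact solution at $t=1$ (Proposition~\ref{boundedness}), a data-uniform closeness estimate at $t=1$, conversion of that closeness into an $L^\infty$ barrier for $u_\Delta(\cdot,1)$ via the one-sided Lipschitz bound of Proposition~\ref{entropy-condition}, and then enlarging $r$ to $\max\{r,\beta_1(1)+1\}$ and iterating unit time steps using $t$-periodicity and the grid parity/conservation at $t=1$ --- this last part coincides with the paper's proof. The genuine difference is how the uniform closeness is obtained. The paper proves only a qualitative statement (Lemma~\ref{unif-L^1}): for every $\varepsilon$ there is $\delta(\varepsilon)$ with $\sup_{u^0,c}\norm\phi^1_\Delta(u^0;c)-\phi^1(u^0;c)\norm_{L^1}\le\varepsilon$, obtained by a contradiction/compactness argument (Arzel\`a--Ascoli together with the continuity Propositions~\ref{continuity} and~\ref{continuity-Delta}), precisely because Claim~7 of Theorem~\ref{SVA} is not uniform in the data; Proposition~\ref{barrier} then turns this into the barrier $\beta_1(1)+1$ by a measure-theoretic argument on the set where $|u_\Delta-u|>\sqrt{\varepsilon}$. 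You instead derive a quantitative uniform bound $\norm u_\Delta(\cdot,1)-u(\cdot,1)\norm_{L^1}\le c_\ast\dx^{1/4}$ from the data-uniform $C^0$ estimate of Theorem~\ref{SVA}(5) plus mesh-independent total-variation bounds, which is exactly the interpolation mechanism the paper deploys only later, in the proof of Theorem~\ref{Main-Error}(1) (there between two difference solutions on nested grids, which avoids the Oleinik/TV bound for the exact solution that your version needs --- legitimately available by letting $\Delta\to0$ in $E^{k}_\Delta\le 2e^{\eta}/H^\ast_{pp}$ at regular points, or from classical semiconcavity); your interval argument replacing Proposition~\ref{barrier} is a clean quantitative variant of the same idea. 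Trade-offs: the paper's route is softer (no TV bound on $u(\cdot,1)$, no rate needed) but yields a non-constructive $\delta$; yours gives an explicit rate and threshold, $\norm u_\Delta(\cdot,1)\norm_{L^\infty}\le\beta_1(1)+O(\dx^{1/8})$, at the cost of importing the Section~4 machinery earlier. Two details you should write out: the ``standard'' interpolation $\norm h\norm_{L^1}\le C\,(\mathrm{TV}(h)\,\norm W\norm_{C^0})^{1/2}$ for $W'=h$ (sign-interval decomposition, splitting intervals by a length threshold), since the paper proves it only in the grid-specific form inside Theorem~\ref{Main-Error}; and the choice of $\lambda_1,\delta$ so that the conditions (\ref{288}) of Proposition~\ref{entropy-condition} hold with $r$ replaced by $\bar r$, as you indicate.
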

%%%%%%%%%%%%%%%%%%%%%%%
\noindent In order to prove this theorem, we need uniform boundedness of $\norm \phi^1_\Delta(u^0;c)\norm_{L^\infty}$ with respect to $(u^0;c)$  similar to that in Proposition~\ref{boundedness}.
First, we observe the following lemma.
%%%%%%%%%%%%%%%%%%%%%%%
\begin{Lemma}\label{unif-L^1}
Let $\lambda_1>0$ be that of Theorem~\ref{SVA} and let $\Delta=(\dx,\dt)$ be such that $0<\lambda_0\le\lambda=\dt/\dx<\lambda_1$.
Fix $t\in(0,T]$ arbitrarily.
Then, for any $\varepsilon>0$ there exists $\delta=\delta(\varepsilon;t)>0$ such that, if $\dx\le\delta$, we have
$$\sup_{u^0\in L^\infty_{r,0}(\T),c\in[c_0,c_1]}\norm\phi^t_\Delta(u^0;c)-\phi^t(u^0;c)\norm_{L^1(\T)}\le\varepsilon.$$
\end{Lemma}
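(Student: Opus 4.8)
The plan is to fix $t\in(0,T]$ and derive the uniform $L^1$-bound from two facts already established: the $C^0$-error estimate for the Hamilton--Jacobi side (Claim 5 of Theorem~\ref{SVA}, giving $\sup_{[0,T]}\norm v_\Delta(\cdot,t)-v(\cdot,t)\norm_{C^0}\le\beta_2\sqrt{\dx}$ uniformly in $c\in[c_0,c_1]$ and $v^0\in Lip_r(\T)$), together with the relation $(v_\Delta)_x=u_\Delta$ and $v_x=u$. The point is that $L^1$-closeness of derivatives follows from $C^0$-closeness of the functions \emph{plus} one-sided Lipschitz (semiconcavity-type) control on both $u_\Delta(\cdot,t)$ and $u(\cdot,t)$, and the latter control is exactly what Proposition~\ref{entropy-condition} and the entropy condition for $u$ provide, uniformly in the data.

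First I would record the quantitative input. Given $u^0\in L^\infty_{r,0}(\T)$ with $u^0=v^0_x$, $v^0\in Lip_r(\T)$, Claim 5 gives $\norm v_\Delta(\cdot,t)-v(\cdot,t)\norm_{C^0(\T)}\le\beta_2\sqrt{\dx}$. On the continuum side, Proposition~\ref{boundedness} (at the fixed time $t$) yields $\norm u(\cdot,t)\norm_{L^\infty}\le\beta_1(t)$ and the semiconcavity estimate for $v(\cdot,t)$: there is $M(t)$ (independent of the data and $c$) with $v(x+h,t)+v(x-h,t)-2v(x,t)\le M(t)h^2$, equivalently $u(\cdot,t)$ satisfies a one-sided Lipschitz bound with constant $M(t)$. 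On the discrete side, Proposition~\ref{entropy-condition}, Claim~1, gives $E^k_\Delta=\max_m\frac{u^k_{m+2}-u^k_m}{2\dx}\le\frac{2e^{\eta t_k}}{H^\ast_{pp}}\frac{1}{t_k}$, so at time $t=t_{k(t)}$ the step function $u_\Delta(\cdot,t)$ has a uniform one-sided Lipschitz bound $\tilde M(t)$ (up to the $O(1)$ drift between $t$ and $t_{k(t)}$, which is harmless since $\dx\le\delta$). Both $\norm u_\Delta(\cdot,t)\norm_{L^\infty}\le u^\ast$ and $\norm u(\cdot,t)\norm_{L^\infty}\le\beta_1(t)$ are uniform. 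Crucially, all these bounds are independent of $(u^0,c)$.

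The core step is the elementary interpolation lemma: if $f,g:\T\to\R$ are bounded, one-sided Lipschitz with a common constant $K$, with $\int_\T(f-g)=0$ or at least with $\int_\T f=\int_\T g=0$ (true here by conservation), and $F,G$ are their primitives with $\norm F-G\norm_{C^0}\le\epsilon$, then $\norm f-g\norm_{L^1}\le C\sqrt{K\epsilon}$ for a universal $C$. The proof of this lemma is the standard trick: write $w=f-g$; the one-sided Lipschitz bounds on $f,g$ give a two-sided modulus control on the positive (or negative) variation of $w$, so $w$ is a function of bounded variation with $\mathrm{Var}(w)\le C'K$; then $\int_\T|w|\le$ (something like) $\sqrt{2\,\mathrm{Var}(w)\cdot\norm{\int w}\norm_{C^0}}$ by splitting $\T$ into intervals where $w$ has a sign and integrating by parts. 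Applying this with $f=u_\Delta(\cdot,t)$, $g=u(\cdot,t)$, $K=\max\{\tilde M(t),M(t)\}$, $\epsilon=\norm v_\Delta(\cdot,t)-v(\cdot,t)\norm_{C^0}\le\beta_2\sqrt{\dx}$ (modulo the $O(\norm u^0\norm_{L^\infty}\dx)$ from $v_\Delta^0$ vs $v^0$ and from the piecewise-linear interpolation, both $O(\dx)$ uniformly since $\norm u^0\norm_{L^\infty}\le r$), yields
$$\norm\phi^t_\Delta(u^0;c)-\phi^t(u^0;c)\norm_{L^1(\T)}\le C\sqrt{K(t)\,\beta_2}\,\dx^{1/4}+o(1),$$
uniformly in $(u^0,c)$. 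Choosing $\delta=\delta(\varepsilon;t)$ so that the right-hand side is $\le\varepsilon$ for $\dx\le\delta$ finishes the proof.

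The main obstacle, and the step deserving the most care, is the BV/one-sided-Lipschitz bookkeeping that feeds the interpolation lemma: one must check that the \emph{common} constant $K(t)$ really is data-independent. For $u(\cdot,t)$ this is Proposition~\ref{boundedness} plus standard semiconcavity of viscosity solutions (which I would either cite via \cite{Cannarsa} or re-derive from Claim~3 of Lemma~\ref{2}); for $u_\Delta(\cdot,t)$ this is Proposition~\ref{entropy-condition}, Claim~1 — but note that its hypotheses require $\dt$ small relative to $\eta,E^\ast$ and the two inequalities in (\ref{288}), so one must restrict to $\dx\le\delta$ small enough (and $\lambda\in[\lambda_0,\lambda_1)$) for that Proposition to apply; this is compatible with shrinking $\delta$ further to meet the $\dx^{1/4}\le\varepsilon$ requirement. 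A secondary, purely technical, point is aligning the step-function grid times $t_{k(t)}$ with the continuum time $t$: the difference is $\le\dt=\lambda\dx$, absorbed into the $o(1)$ using Lipschitz-in-time continuity of $v$ (and of $v_\Delta$ in the step sense), both uniform in the data. No genuinely new idea is needed beyond Theorem~\ref{SVA} and Proposition~\ref{entropy-condition}; the work is entirely in assembling them with the classical ``$C^0 + \text{BV} \Rightarrow L^1$ rate'' estimate, uniformly over the data.
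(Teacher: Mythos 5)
Your proof goes a genuinely different way from the paper. The paper's argument is soft and non-quantitative: it argues by contradiction, shows that the supremum over $(u^0,c)\in L^\infty_{r,0}(\T)\times[c_0,c_1]$ is attained (Arzela--Ascoli on primitives $v^0\in Lip_r(\T)$ together with the continuity of the solution operators in Propositions~\ref{continuity} and~\ref{continuity-Delta}), and then applies Claim~7 of Theorem~\ref{SVA} at the limiting datum to reach a contradiction; no one-sided Lipschitz estimates are needed and no rate is produced. Your route is quantitative and is essentially the mechanism the paper itself uses later in Theorem~\ref{Main-Error}: combine the uniform $C^0$-bound $\norm v_\Delta(\cdot,t)-v(\cdot,t)\norm_{C^0}\le\beta_2\sqrt{\dx}$ with uniform one-sided Lipschitz (hence BV) control of the derivatives and the elementary ``$C^0$ of primitives $+$ BV $\Rightarrow L^1$'' interpolation, yielding an $O(\dx^{1/4})$ rate which is stronger than the Lemma's conclusion. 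The interpolation lemma you state is correct (a periodic one-sided Lipschitz function with constant $K$ has total variation at most $2K$, and the sign-interval splitting gives $\norm f-g\norm_{L^1}\le C\sqrt{K\epsilon}$), and the uniformity in $(u^0,c)$ of the ingredients you invoke is indeed asserted in Claim~5 of Theorem~\ref{SVA} and in Proposition~\ref{entropy-condition}. One difference worth noting: the paper, when it runs this argument in Theorem~\ref{Main-Error}, compares $u_\Delta$ with a second, much finer difference solution $u_{\tilde\Delta}$ rather than with $u$ itself, so that only the \emph{discrete} one-sided Lipschitz bound is ever used; you instead need a data-independent semiconcavity bound for the exact solution, which is fine if cited from \cite{Cannarsa} or obtained by passing the bound of Proposition~\ref{entropy-condition} to the limit via Claim~7, but it does \emph{not} follow from Claim~3 of Lemma~\ref{2}, which only bounds $|v_x|$ along minimizers, not the one-sided difference quotients.

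There is, however, one genuine mismatch with the statement as given. The Lemma assumes only $0<\lambda_0\le\lambda=\dt/\dx<\lambda_1$, whereas your proof needs the hypotheses of Proposition~\ref{entropy-condition}, and you claim these can be met simply by shrinking $\dx$. That is not true of the second inequality in (\ref{288}): as $\dx\to0$ with $\lambda$ fixed it becomes $\lambda\le 1/(rH^\ast_{pp})$ (up to the $\dt$-correction), i.e.\ a constraint tying $\lambda$ to the size $r$ of the admissible data, which can fail for $\lambda\in[\lambda_0,\lambda_1)$ no matter how small $\dx$ is. So as written your argument proves the Lemma only under the additional mesh restrictions of Proposition~\ref{entropy-condition}, not in the stated generality; the paper's compactness proof avoids this because it never uses the one-sided Lipschitz machinery. (In the paper's subsequent applications, e.g.\ Proposition~\ref{barrier}, those extra conditions are imposed anyway, so your version would suffice for the downstream results, but the discrepancy with the Lemma's hypotheses should be acknowledged or the restriction added.)
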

%%%%%%%%%%%%%
\begin{proof}[\bf Proof.]
If not, then for some $\varepsilon_0>0$ and $\delta_j\to0$ as $j\to\infty$, we have $\dx_j\le\delta_j$ such that
\begin{eqnarray}\label{unif-L^1-1}
\sup_{u^0\in L^\infty_{r,0}(\T),c\in[c_0,c_1]}\norm\phi^t_{\Delta_j}(u^0;c)-\phi^t(u^0;c)\norm_{L^1(\T)}>\varepsilon_0,
\end{eqnarray}
where $\Delta_j=(\dx_j,\lambda\dx_j)$.
We show that for each $j$ there exists $(u^0,c)\in L^\infty_{r,0}(\T)\times[c_0,c_1]$ that attains the supremum~(\ref{unif-L^1-1}) denoted by $b$.
Let $(u^0_i,c^i)$ be the sequence for which $\norm\phi^t_{\Delta_j}(u^0_i;c^i)-\phi^t(u^0_i;c^i)\norm_{L^1(\T)}\to b$ as $i\to\infty$.
Let $v^0_i$ be a primitive of $u^0_i$ that belongs to $Lip_r(\T)$ and is bounded by $r$.
By the Arzela-Ascoli theorem we have a subsequence of $(v^0_i,c^i)$, still denoted by $(v^0_i,c^i)$, which converges to $(v^0,c)$.
By Propositions~\ref{continuity} and~\ref{continuity-Delta}, we have
$\phi^t_{\Delta_j}(v^0_i{}_x;c^i)\to \phi^t_{\Delta_j}(v^0_x;c)$ in $L^1(\T)$ and $\phi^t(v^0_i{}_x;c^i)\to \phi^t(v^0_x;c)$ in $L^1(\T)$ as $i\to\infty$.
Therefore, we obtain $\norm\phi^t_{\Delta_j}(v^0_x;c)-\phi^t(v^0_x;c)\norm_{L^1(\T)}= b$.

Let $(u^0_j,c^j)$ attain the supremum~(\ref{unif-L^1-1}).
Let $v^0_j$ be a primitive of  $u^0_j$ that belongs to $Lip_r(\T)$ and is bounded by $r$.
We have a subsequence of $(v^0_j,c^j)$, still denoted by $(v^0_j,c^j)$, which converges to $(v^0,c)$.
It follows from Claim~7 of Theorem~\ref{SVA} that there exists $\delta_0>0$ such that, if $\dx\le\delta_0$, we have $\norm\phi^t_{\Delta}(v^0_x;c)-\phi^t(v^0_x;c)\norm_{L^1(\T)}<\frac{\varepsilon_0}{2}$.
Hence,
\begin{eqnarray*}
\frac{\varepsilon_0}{2}&>&\norm\phi^t_{\Delta}(v^0_x;c)-\phi^t(v^0_x;c)\norm_{L^1(\T)}\\
&\ge&\norm\phi^t_{\Delta}(v^0_j{}_x;c^j)-\phi^t(v^0_j{}_x;c^j)\norm_{L^1(\T)}-\norm\phi^t_{\Delta}(v^0_x;c)-\phi^t_\Delta(v^0_j{}_x;c^j)\norm_{L^1(\T)}\\
&&-\norm\phi^t(v^0_j{}_x;c^j)-\phi^t(v^0_x;c)\norm_{L^1(\T)}.
\end{eqnarray*}
By Propositions~\ref{continuity} and~\ref{continuity-Delta}, we have $\norm\phi^t_{\Delta}(v^0_x;c)-\phi^t_\Delta(v^0_j{}_x;c^j)\norm_{L^1(\T)}+\norm\phi^t(v^0_j{}_x;c^j)-\phi^t(v^0_x;c)\norm_{L^1(\T)}\le \frac{\varepsilon_0}{2}$ for large $j$.
Therefore, we have
$\norm\phi^t_{\Delta}(v^0_j{}_x;c^j)-\phi^t(v^0_j{}_x;c^j)\norm_{L^1(\T)}<\varepsilon_0$ for any $\dx\le\delta_0$, which is a contradiction.
\end{proof}
Next, we see that the convergence  $\norm\phi^1_{\Delta}(u^0;c)-\phi^1(u^0;c)\norm_{L^1(\T)}\to0$ as $\Delta\to0$, which is uniform with respect to $(u^0,c)$,  yields uniform boundedness of $\norm \phi^1_{\Delta}(u^0;c)\norm_{L^\infty}$ with the aid of the one-sided Lipschitz condition.
%%%%%%%%%%%%%%%%%%%
\begin{Prop}\label{barrier}
Let $\lambda_1>0$ be that of Theorem~\ref{SVA} with $T=1$.
Let $\Delta=(\dx,\dt)$ be such that $0<\lambda_0\le\lambda=\dt/\dx<\lambda_1$, satisfying the conditions in Proposition~\ref{entropy-condition}.
Then, there exists $\delta>0$ such that, if $\dx\le\delta$, with $\beta_1$ of Proposition~\ref{boundedness} we have
$$\sup_{u^0\in L^\infty_{r,0}(\T),c\in[c_0,c_1]}\norm \phi^1_\Delta(u^0;c)\norm_{L^\infty}\le \beta_1(1)+1.$$
\end{Prop}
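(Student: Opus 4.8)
The plan is to bound $\norm\phi^1_\Delta(u^0;c)\norm_{L^\infty}$ by comparing the difference entropy solution $\phi^1_\Delta(u^0;c)$ with the \emph{genuine} entropy solution $\phi^1(u^0;c)$, which is already uniformly bounded by $\beta_1(1)$ via Proposition~\ref{boundedness} (taken with $t=1$), and then to upgrade the uniform $L^1$-closeness supplied by Lemma~\ref{unif-L^1} to a uniform $L^\infty$ ceiling with the help of the discrete one-sided Lipschitz (rarefaction-free) bound of Proposition~\ref{entropy-condition}.

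First I would set $B:=\beta_1(1)$, so that $\norm\phi^1(u^0;c)\norm_{L^\infty}\le B$ for all $u^0\in L^\infty_{r,0}(\T)$ and $c\in[c_0,c_1]$. Next, since $\Delta$ is assumed to satisfy the hypotheses of Proposition~\ref{entropy-condition} with $T=1$, Claim~1 of that proposition, applied at the time index $k(1)$ for which $1\in[t_{k(1)},t_{k(1)+1})$, gives
$$E^{k(1)}_\Delta=\max_m\frac{u^{k(1)}_{m+2}-u^{k(1)}_m}{2\dx}\le\frac{2e^{\eta t_{k(1)}}}{H^\ast_{pp}}\,\frac{1}{t_{k(1)}}.$$
Because $t_{k(1)}\in(1-\dt,1]$ and $\dt\to0$ under hyperbolic scaling, after shrinking $\dx$ so that $t_{k(1)}\ge\tfrac12$ the right-hand side is dominated by a constant $E_1:=4e^{\eta}/H^\ast_{pp}$ depending only on the fixed data $H,[c_0,c_1],r$, hence not on $\Delta$ nor on $(u^0,c)$. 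Writing $w:=\phi^1_\Delta(u^0;c)$, the step function equal to $u^{k(1)}_m$ on $[x_{m-1},x_{m+1})$, the bound $E^{k(1)}_\Delta\le E_1$ telescopes over consecutive intervals to $w(y)-w(x)\le E_1(y-x)$ for all interval centers $x<y$, and hence (up to an $O(\dx)$ that is harmless below) for all $x<y$.

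The core step is a barrier argument. Put $\ell:=\min\{(2E_1)^{-1},\tfrac12\}<1$ and $\varepsilon:=\ell/4$, and let $\delta>0$ be given by Lemma~\ref{unif-L^1} for this $\varepsilon$ and $t=1$ (further shrunk so the previous paragraph applies), so that $\norm w-\phi^1(u^0;c)\norm_{L^1(\T)}\le\varepsilon$ whenever $\dx\le\delta$, uniformly in $(u^0,c)$. If $w(x_0)>B+1$ at some interval center $x_0$, then the one-sided Lipschitz bound gives $w(x)\ge w(x_0)-E_1(x_0-x)>B+1-E_1\ell\ge B+\tfrac12$ on $[x_0-\ell,x_0]$, so $|w-\phi^1(u^0;c)|>\tfrac12$ throughout that arc of length $\ell$, forcing $\norm w-\phi^1(u^0;c)\norm_{L^1(\T)}\ge\ell/2>\varepsilon$, a contradiction. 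The opposite sign is symmetric: if $w(x_0)<-(B+1)$, the same bound read forward gives $w(x)\le w(x_0)+E_1(x-x_0)<-(B+\tfrac12)$ on $[x_0,x_0+\ell]$, again contradicting the $L^1$-estimate. Hence $\norm w\norm_{L^\infty}\le B+1=\beta_1(1)+1$ for all $(u^0,c)$, which is the claim (the $\delta$ thus produced depending only on the data).

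The part I expect to be the main obstacle is ensuring that both quantitative inputs are genuinely uniform: the one-sided Lipschitz constant $E_1$ must not depend on $\Delta$ or on the initial datum — this is exactly why Proposition~\ref{entropy-condition}'s bound, valid once $\Delta$ meets its conditions, is stated data-independently — and the $L^1$-gap must be uniformly small over all $(u^0,c)$, which is the content of Lemma~\ref{unif-L^1} (resting in turn on the uniform convergence in Claim~7 of Theorem~\ref{SVA} together with the continuity Propositions~\ref{continuity} and~\ref{continuity-Delta}). Once those are in hand the geometry is elementary: a rarefaction-free profile that is $L^1$-close to a function bounded by $B$ cannot overshoot $B+1$, since one offending point would propagate — to the left for a positive overshoot, to the right for a negative one — into a whole arc on which the profile stays more than $\tfrac12$ away from the comparison function.
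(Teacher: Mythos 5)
Your proposal is correct and uses essentially the same ingredients and strategy as the paper's proof: the uniform bound $\beta_1(1)$ on the exact solution from Proposition~\ref{boundedness}, the uniform $L^1$-closeness of Lemma~\ref{unif-L^1}, and the $\Delta$-independent one-sided Lipschitz bound of Proposition~\ref{entropy-condition}, combined in a barrier/contradiction argument. The only (harmless) difference is the final step: the paper removes a Chebyshev-type exceptional set where the pointwise error exceeds $\sqrt{\varepsilon}$ and contradicts the one-sided Lipschitz bound through a large difference quotient to a nearby good grid point, whereas you propagate the putative overshoot along an arc of fixed length (leftward for a positive overshoot, rightward for a negative one) and contradict the uniform $L^1$ estimate directly; the two formulations are interchangeable.
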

%%%%%%%%%%%%%%%%%%%
\begin{proof}[\bf Proof]
Let $0<\varepsilon<1$ be such that $\frac{1-\sqrt{\varepsilon}}{3\sqrt{\varepsilon}}>2e^{\eta}/H_{pp}^\ast\ge E^{2K}_\Delta$, where $2K\dt=1$.
With this $\varepsilon$ and $t=1$, we have $\delta>0$ in Lemma~\ref{unif-L^1}.
We take $\dx\le \min\{\delta,\sqrt{\varepsilon}\}$.
Consider
$$A:=\{y\in\T\,\,|\,\,|\phi^1_\Delta(u^0;c)(y)-\phi^1(u^0;c)(y)|> \sqrt{\varepsilon}\}.$$
Since $\norm\phi^1_\Delta(u^0;c)-\phi^1(u^0;c)\norm_{L^1(\T)}\le\varepsilon$, we have $\mes[A]\le\sqrt{\varepsilon}$.
Hence, for $y\in A$ there exists $x,\tilde{x}\in\R\setminus A$ such that $0<y-x\le \sqrt{\varepsilon}$ and $0<\tilde{x}-y\le \sqrt{\varepsilon}$.
For $x\in\T\setminus A$, we have $|\phi^1_\Delta(u^0;c)(x)-\phi^1(u^0;c)(x)|\le\sqrt{\varepsilon}$ and $|\phi^1_\Delta(u^0;c)(x)|\le|\phi^1(u^0;c)(x)|+ \sqrt{\varepsilon}\le \beta_1(t)+1$.
Consider
$$\tilde{A}:=\{ y\in A\,\,|\,\,|\phi^1_\Delta(u^0;c)(y)|> \beta_1(t)+1 \}.$$
Suppose that $\tilde{A}$ is not empty.
Then, there exists $x_n\in \tilde{A}\cap(\dx\Z)$ such that $u^{2K}_n>  \beta_1(1)+1$ (resp. $u^{2K}_n<-\beta_1(1)-1$).
Since there exist $x_m,x_{m'}\in(\R\setminus A)\cap(\dx\Z)$ such that $0<x_n-x_m\le \sqrt{\varepsilon}+2\dx\le3\sqrt{\varepsilon}$ and $0<x_{m'}-x_n\le \sqrt{\varepsilon}+2\dx\le3\sqrt{\varepsilon}$,  we have
\begin{eqnarray*}
&&\frac{u^{2K}_n-u^{2K}_m}{x_n-x_m}>\frac{\beta_1(1)+1-(\beta_1(1)+\sqrt{\varepsilon})}{3\sqrt{\varepsilon}}=\frac{1-\sqrt{\varepsilon}}{3\sqrt{\varepsilon}}> E^{2K}_\Delta,\\
\mbox{\Big(resp.}&&\frac{u^{2K}_{m'}-u^{2K}_n}{x_{m'}-x_n}>\frac{-(\beta_1(1)+\sqrt{\varepsilon})-(-\beta_1(1)-1)}{3\sqrt{\varepsilon}}=\frac{1-\sqrt{\varepsilon}}{3\sqrt{\varepsilon}}> E^{2K}_\Delta\Big).
\end{eqnarray*}
These two inequalities contradict the one-sided Lipschitz condition.
\end{proof}
%%%%%%%%%%%%%%
\noindent{\bf Remark.} Claim~7 of Theorem~\ref{SVA} states that $\phi^1_\Delta(u^0;c)$ converges to $\phi^1(u^0;c)$ uniformly on $\T\setminus\Theta$ as $\Delta\to0$, where $\Theta$ is an arbitrary small neighborhood of shocks.
However, we cannot use this fact for Proposition~\ref{barrier}, because uniformity of the convergence with respect to $(u^0;c)$ is unverified.

\medskip

%%%%%%%%%%%%%%%%%%%%
\begin{proof}[\bf Proof of Theorem~\ref{Main-stability}.]
Let $\lambda_1>0$ be that of Theorem~\ref{SVA} with $T=1$ and $r\ge\beta_1(1)+1$.
Let $\delta>0$ be that of Proposition~\ref{barrier}.
Then, $\tilde{u}^0:=\phi^1_\Delta(u^0;c)$ belongs to $L^\infty_{\beta_1(1)+1,0}(\T)$ for any $u^0\in L^\infty_{r,0}(\T)$.
Hence, by the choice of $\lambda_1$, we are guaranteed that $\phi^1_\Delta(\tilde{u}^0;c)=\phi^2_\Delta(u^0;c)$ is well defined and bounded by $\beta_1(1)+1$ again.
In this way, $\phi^t_\Delta(u^0;c)$ can be defined for $t\to\infty$ with the CFL~condition.
\end{proof}
%%%%%%%%%%%%%%
\subsection{Large-Time Behavior}
If we take $r\ge\beta_1(1)+1$, then $\phi^1_\Delta(u^0;c)$ belongs to  $L^\infty_{r,0}(\T)$.
Therefore, $\phi^1_\Delta(\cdot;c)$ maps $L^\infty_{r,0}(\T)$ into itself.
We can find the fixed points of the map for each $c$.
In this subsection, we consider the fixed points and their stability, which makes clear the large-time behavior of the Lax-Friedrichs scheme.
Note that the Lax-Friedrichs scheme has a contraction property under the CFL~condition.
That is, for $0\le t\le t'$ we have
\begin{eqnarray*}
\norm \phi^{t'}_\Delta({u}^0;c)-\phi^{t'}_\Delta(\tilde{u}^0;c)\norm_{L^1(\T)}&\le& \norm \phi^t_\Delta({u}^0;c)-\phi^t_\Delta(\tilde{u}^0;c)\norm_{L^1(\T)}.
\end{eqnarray*}
This can be refined to become a strict contraction property.
Let $\sum_{m;k}$ denote summation with respect to $\{m\,|\,0\le m<2N,\,\,m+k\;\mbox{even}\}$ for each fixed $k$, and let $\norm x \norm_1:=\sum_{1\le j\le n}|x_j|$ for $x\in\R^n$.
%%%%%%%%%%%%%%%%%%%
\begin{Prop}\label{contraction}
The family of maps $\{\phi^t_\Delta(\cdot;c)\}_{t\ge0}$ has a strict contraction property within the unit time period.
That is, for any two distinct initial data $u^0$ and $\tilde{u}^0$, we have
$$\norm \phi^{t+1}_\Delta(u^0;c)-\phi^{t+1}_\Delta(\tilde{u}^0;c)\norm_{L^1(\T)} < \norm \phi^{t}_\Delta(u^0;c)-\phi^{t}_\Delta(\tilde{u}^0;c)\norm_{L^1(\T)}.$$
\end{Prop}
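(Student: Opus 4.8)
The plan is to establish the contraction property (for $0 \le t \le t'$) first, and then show that equality over a full unit-time step forces $u^0 = \tilde{u}^0$, which gives the strict inequality stated. The non-strict contraction follows from the standard monotonicity of the Lax-Friedrichs scheme under the CFL condition: writing the scheme as $u^{k+1}_{m+1} = G(u^k_m, u^k_{m+2})$ with $\partial G / \partial u^k_m = \tfrac12 + \tfrac{\lambda}{2} H_p(x_m,t_k,c+u^k_m) \ge 0$ and $\partial G/\partial u^k_{m+2} = \tfrac12 - \tfrac{\lambda}{2}H_p(x_{m+2},t_k,c+u^k_{m+2}) \ge 0$ (using the CFL condition from Theorem~\ref{Main-stability}/Theorem~\ref{SVA}), and noting the two coefficients sum to $1$, one gets for $z^k_m := u^k_m - \tilde{u}^k_m$ the inequality $|z^{k+1}_{m+1}| \le a^k_m |z^k_m| + (1-a^k_m)|z^k_{m+2}|$ with $a^k_m \in [0,1]$. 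Summing over $m$ in a period (using $\sum_{m;k}$ and periodicity to reindex) telescopes to $\sum_{m;k+1}|z^{k+1}_m| \le \sum_{m;k}|z^k_m|$, i.e. $\norm \phi^{t_{k+1}}_\Delta(u^0;c) - \phi^{t_{k+1}}_\Delta(\tilde u^0;c)\norm_{L^1(\T)} \le \norm \phi^{t_k}_\Delta(u^0;c) - \phi^{t_k}_\Delta(\tilde u^0;c)\norm_{L^1(\T)}$, and iterating gives the monotone (non-strict) statement.

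For the strict inequality over one unit period, I would argue by contradiction: suppose equality holds between $t$ and $t+1$. By the non-strict step-by-step contraction, equality must hold at every intermediate time step $t_k$, $k(t) \le k \le k(t)+2K$ (where $2K\dt = 1$). Equality in the summed inequality at a single step forces, for every $m$ in a period, $|z^{k+1}_{m+1}| = a^k_m|z^k_m| + (1-a^k_m)|z^k_{m+2}|$ with no cancellation, which (since $a^k_m, 1-a^k_m$ are strictly positive under the strict CFL condition $|\lambda H_p| \le \lambda_1^{-1} < 1$) means $z^k_m$ and $z^k_{m+2}$ have the same sign (or one vanishes). Propagating this ``same sign'' condition across all $m$ and all the $2K$ time levels, together with the conservation of $\sum_{m;k} z^k_m \cdot 2\dx = 0$ (the difference of two zero-mean data stays zero-mean), forces all $z^k_m$ to vanish. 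The key point making this work is that within one unit period the ``cone of dependence'' of the scheme covers a full spatial period: a point at time $t_k + 1$ depends on $2K+1 \ge 2N+1$ grid points at time $t_k$, hence on all grid values in a period; so the sign-propagation plus zero-mean genuinely couples everything. Thus $z \equiv 0$ at time $t$, contradicting $u^0 \ne \tilde u^0$ (which, after the earlier time steps, would have to have produced $z \ne 0$ at time $t$ unless it was zero from the start — and one checks that if $\phi^t_\Delta(u^0) = \phi^t_\Delta(\tilde u^0)$ then by reversibility considerations, or simply because the claim is then about the later data, one reduces to the case $t = 0$).

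Concretely I would organize it as: (i) record the monotone-coefficient form of the scheme and derive the per-step non-strict $L^1$ contraction by summation over a period; (ii) observe that equality over $[t,t+1]$ forces per-step equality at each of the $2K$ intervening steps; (iii) analyze the equality case of a single step to get the pointwise sign-coincidence condition, using strict positivity of both coefficients from the strict CFL condition of Theorem~\ref{Main-stability}; (iv) use the finite-speed-of-propagation structure — one unit of time spans at least a full spatial period — together with zero mean to conclude $z \equiv 0$ after the unit period can only happen if $z \equiv 0$ at the start of it, giving strictness.

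The main obstacle I anticipate is step (iv): carefully turning the local ``adjacent differences have the same sign'' statement into a global rigidity statement. One has to be attentive that ``same sign'' allows zeros, so a naive argument only shows $z^k_m$ is ``monotone-like'' along the grid at each level; the zero-mean constraint is what kills the remaining freedom, but combining it cleanly with the $2K$-step propagation (rather than just one step) requires some care about indices and about which grid — even or odd — one is summing over. An alternative, possibly cleaner, route for step (iv) is to invoke the already-established variational/stochastic representation (Theorem~\ref{SVA}): equality in the contraction would force the minimizing random walks for $u^0$ and $\tilde u^0$ to coincide with full probability over a unit period, and since these random walks explore the whole period, the two initial data must agree $u^0_\Delta$-a.e.; I would mention this as the backup argument.
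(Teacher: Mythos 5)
Your proposal is correct and follows essentially the same route as the paper's proof: the per-step $L^1$ identity with nonnegative coefficients summing to one, the observation that equality forces adjacent differences to satisfy $z^k_m\,z^k_{m+2}\ge 0$, and the zero-mean condition together with propagation of nonzero sign blocks into the zero gaps within fewer than $2K$ steps (the paper states this as $R^{k+k^\ast}<0$ for some $k^\ast<N<2K$, invoking monotonicity of the scheme) to get strictness within one unit period. The only point to watch --- which the paper also leaves implicit --- is the degenerate case $\phi^t_\Delta(u^0;c)=\phi^t_\Delta(\tilde{u}^0;c)$, where a strict inequality could not hold; both your argument and the paper's tacitly assume the difference is not identically zero at time $t$.
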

%%%%%%%%%%%%%%%%%%%%%%%%%%%%%%%%%
\begin{proof}[{\bf Proof.}]
It is sufficient to show that for all $k\ge 0$ any two difference solutions $u^k$ and $\tilde{u}^k$ of (\ref{2CL-Delta}) satisfy
$$\norm u^{k+2K}-\tilde{u}^{k+2K}\norm_1<\norm u^k-\tilde{u}^k\norm_1.$$
Set $z^k_m:=u^k_m-\tilde{u}^k_m$ and $\sigma^k_m:=\mbox{sign\,}z^k_m=1$ or $-1$ (${\rm sign}\,0:=1$).
Then, $\norm u^k-\tilde{u}^k\norm_1=\sum_{m;k}|z^{k}_m|=\sum_{m;k}\sigma^k_m z^{k}_m$.
By the difference equation of (\ref{2CL-Delta}), we have
\begin{eqnarray*}
\sum_{m;k}|z^{k+1}_{m+1}|=\sum_{m;k}\sigma^{k+1}_{m+1}z^{k+1}_{m+1}=\sum_{m;k}\sigma^{k+1}_{m+1}\Big\{\frac{1}{2}z^k_{m+2}(1-\lambda\delta^k_{m+2})+\frac{1}{2}z^k_{m}(1+\lambda\delta^k_{m}) \Big\},
\end{eqnarray*}
where $\delta^k_m:=H_p(x_{m},t_k,c+u^k_m+\theta^k_m)$ with a constant $\theta^k_m$ derived from Taylor's formula.
Switching the order of the summations above, we obtain
\begin{eqnarray*}
\sum_{m;k}|z^{k+1}_{m+1}|&=&\sum_{m;k}z^{k}_{m}\left\{\frac{1}{2}\sigma^{k+1}_{m-1}(1-\lambda\delta^k_{m})+\frac{1}{2}\sigma^{k+1}_{m+1}(1+\lambda\delta^k_{m}) \right\}\\
&=&\sum_{m;k}|z^k_m|+\sum_{m;k}|z^k_m|\left[-1+ \sigma^{k}_{m}
\left\{\frac{1}{2}\sigma^{k+1}_{m-1}(1-\lambda\delta^k_{m})+\frac{1}{2}\sigma^{k+1}_{m+1}(1+\lambda\delta^k_{m}) \right\}
\right].
\end{eqnarray*}
Let $R^k$ denote the second sum in the second line of the above equality.
We find that $R^k\le0$, since for each term of $R^k$ the factor $[\quad]$ belongs to one of two cases:

(1)\quad If $\sigma^{k+1}_{m-1}+\sigma^{k+1}_{m+1}=\pm2$, then $[\quad]=-1\pm\sigma^k_m=0$ or $-2$.

(2)\quad If $\sigma^{k+1}_{m-1}+\sigma^{k+1}_{m+1}=0$, then $[\quad]=-1\pm\lambda\delta^k_m<0$ due to the CFL~condition.

\noindent Since $u^k$ and $\tilde{u}^k$ each have zero mean and $u^0\neq \tilde{u}^0$,  the sign of $z^k_m$ necessarily changes and Case~(2) occurs.
It seems possible that even though $u^k$ and $\tilde{u}^k$ are such, we may have $R^k=0$; namely, $z^{k}_{m}=0$ for all the integers $m$ for which Case~(2) occurs.
However, after further $k^\ast$-time evolution ($k^\ast<N<2K$), Case~(2) certainly occurs and $R^{k+k^\ast}<0$, because such zero-points disappear as $k$ increases due to the monotonicity of the Lax-Friedrichs scheme under the CFL~condition (see also Remark~2.5 in~\cite{Nishida-Soga}).
\end{proof}
\indent We now show that time periodic difference solutions not only exist but are stable, which provides the large-time behavior of the Lax-Friedrichs scheme.
%%%%%%%%%%%%%%%%%%
\begin{Thm}\label{fixed-point1}
Take $r\ge\beta_1(1)+1$ and fix $\Delta=(\dx,\dt)$ so that Theorems~\ref{SVA} and~\ref{Main-stability} hold.
Then, for each $c$ there exists a fixed point $\bar{u}^0_\Delta\in L^\infty_{r,0}(\T)$ of $\phi_\Delta^1(\cdot;c)$, which yields a time periodic difference solution $\phi^t_\Delta(\bar{u}^0_\Delta;c)$.
Such a periodic solution is unique with respect to $c$.
Any other solution $\phi^t_\Delta(u^0;c)$ exponentially falls into the periodic state; namely, there exist $\rho\in(0,1)$ and $a>0$ depending on $\Delta$, but independent of $u^0$, such that
$\norm\phi^t_\Delta(u^0;c)-\phi^t_\Delta(\bar{u}^0_\Delta;c)\norm_{L^\infty}\le a\rho^t\mbox{ for $t\in\N$}.$
\end{Thm}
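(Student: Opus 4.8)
Plan of attack.

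The plan is to derive existence of the fixed point from a compactness/contraction argument, and then derive uniqueness and exponential attraction from the strict contraction property in Proposition~\ref{contraction}. First I would note that, by the choice $r\ge\beta_1(1)+1$ together with Proposition~\ref{barrier} (applicable since $\Delta$ satisfies the hypotheses of Proposition~\ref{entropy-condition}), the map $\phi^1_\Delta(\cdot;c)$ sends $L^\infty_{r,0}(\T)$ into itself. Moreover, the one-sided Lipschitz bound $E^{2K}_\Delta\le 2e^\eta/H^\ast_{pp}$ from Proposition~\ref{entropy-condition} gives a uniform modulus on the image, so $\phi^1_\Delta(L^\infty_{r,0}(\T);c)$ lies in a set whose primitives are equi-Lipschitz and equibounded. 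Passing to primitives $v^0$ and using the Arzel\`a--Ascoli theorem exactly as in the proof of Lemma~\ref{unif-L^1}, this image is precompact in the relevant topology ($L^1(\T)$ for $u$, $C^0(\T)$ for $v$). Combined with the $L^1$-contraction (non-strict) property of $\phi^1_\Delta(\cdot;c)$ and continuity (Proposition~\ref{continuity-Delta}), a Schauder-type fixed point argument — or, more simply, the fact that a non-expansive self-map of a compact convex set in a Banach space has a fixed point (Schauder) — yields $\bar u^0_\Delta$ with $\phi^1_\Delta(\bar u^0_\Delta;c)=\bar u^0_\Delta$. Iterating gives the time periodic solution $\phi^t_\Delta(\bar u^0_\Delta;c)$ with unit period.

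Uniqueness is immediate: if $\bar u^0_\Delta$ and $\tilde{\bar u}^0_\Delta$ were two distinct fixed points, then Proposition~\ref{contraction} applied with $t=0$ gives $\norm\phi^1_\Delta(\bar u^0_\Delta;c)-\phi^1_\Delta(\tilde{\bar u}^0_\Delta;c)\norm_{L^1(\T)}<\norm\bar u^0_\Delta-\tilde{\bar u}^0_\Delta\norm_{L^1(\T)}$, i.e. $\norm\bar u^0_\Delta-\tilde{\bar u}^0_\Delta\norm_{L^1(\T)}<\norm\bar u^0_\Delta-\tilde{\bar u}^0_\Delta\norm_{L^1(\T)}$, a contradiction.

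For exponential attraction, the strict contraction in Proposition~\ref{contraction} only gives strict decrease, not a uniform rate, so the key step is to upgrade this to a genuine contraction constant $\rho<1$. I would do this on the finite-dimensional slice: a difference solution on $\T$ with mesh $\dx=(2N)^{-1}$ and one unit of time ($2K$ steps) is an iteration of a fixed continuous map $\Phi:=\phi^1_\Delta(\cdot;c)$ on the compact convex set $\mathcal{K}:=\{u\in L^\infty_{r,0}(\T): u \text{ is a step function on the grid with } E_\Delta\le 2e^\eta/H^\ast_{pp}\}$, which is finite-dimensional. On $\mathcal{K}\times\mathcal{K}$, the function $(u,\tilde u)\mapsto \norm\Phi u-\Phi\tilde u\norm_1/\norm u-\tilde u\norm_1$ (defined for $u\ne\tilde u$) is bounded by $1$ by non-expansiveness, and by Proposition~\ref{contraction} is $<1$ at every point; since $\Phi$ maps into $\mathcal{K}$ and is continuous, and since one actually has strict decrease after at most $N$ further steps uniformly, a compactness argument on the (compact) set of normalized differences $\{(u-\tilde u)/\norm u-\tilde u\norm_1\}$ shows the supremum of this ratio over a suitable power $\Phi^{\circ p}$ is some $\rho_0<1$. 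This gives $\norm\phi^{t+p}_\Delta(u^0;c)-\phi^{t+p}_\Delta(\bar u^0_\Delta;c)\norm_{L^1}\le\rho_0\norm\phi^t_\Delta(u^0;c)-\phi^t_\Delta(\bar u^0_\Delta;c)\norm_{L^1}$ for all $t$, hence $L^1$-exponential decay with rate $\rho=\rho_0^{1/p}$. Finally, to pass from $L^1$ to $L^\infty$: both $\phi^t_\Delta(u^0;c)$ and $\phi^t_\Delta(\bar u^0_\Delta;c)$ satisfy the uniform one-sided Lipschitz bound of Proposition~\ref{entropy-condition} and are bounded by $r$, so their difference $z(\cdot)$, having zero mean, small $L^1$-norm, and a one-sided Lipschitz bound on both $z$ and $-z$ (i.e. an effective two-sided control of its oscillation at the relevant scale), satisfies $\norm z\norm_{L^\infty}\le C\,\norm z\norm_{L^1}^{1/2}$ by an elementary interpolation (a BV-type estimate: a function with bounded one-sided slopes cannot be large at a point without having large $L^1$-norm nearby, the same device used in the proof of Proposition~\ref{barrier}). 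Composing the square root with the exponential $L^1$-decay still yields exponential $L^\infty$-decay, after adjusting $\rho$ and $a$.

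The main obstacle will be the passage from the bare strict-contraction statement of Proposition~\ref{contraction} to a uniform rate $\rho<1$. Proposition~\ref{contraction} is proved pointwise and gives no quantitative gap; one must exploit finite-dimensionality of the grid and compactness of the space of normalized difference vectors, and handle the possibility (flagged in the proof of Proposition~\ref{contraction}) that the sign-change term $R^k$ vanishes for a few steps before becoming strictly negative — this is why a fixed power $\Phi^{\circ p}$ with $p$ of order $N$ (rather than $\Phi$ itself) is needed. Once the uniform rate is in hand, the $L^1\to L^\infty$ upgrade and the existence/uniqueness parts are routine given Propositions~\ref{boundedness}, \ref{continuity-Delta}, \ref{entropy-condition}, \ref{barrier}, and \ref{contraction}.
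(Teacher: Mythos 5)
Your existence and uniqueness arguments are essentially the paper's: for fixed $\Delta$ the map $\phi^1_\Delta(\cdot;c)$ acts on a finite-dimensional compact convex set of grid functions (the paper simply invokes Brouwer's fixed point theorem; your Schauder/Arzel\`a--Ascoli detour is correct but heavier than needed), and uniqueness follows from Proposition~\ref{contraction} exactly as you say. The paper does not prove the exponential decay in this article at all --- it refers to (5) of Theorem~2.1 of \cite{Nishida-Soga} --- so the part of your proposal that carries real weight is your derivation of a uniform rate $\rho<1$, and that step has a genuine gap.

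Pointwise strict contraction on a compact convex set does not yield a uniform contraction factor, even in finite dimension and even after passing to a power of the map: the ratio $\norm\Phi u-\Phi\tilde u\norm_{1}/\norm u-\tilde u\norm_{1}$ is defined only off the diagonal, which is not compact, and a maximizing sequence may coalesce ($u,\tilde u\to w$), in which case the limit of the ratio is governed by the linearization $D\Phi(w)$ and can equal $1$ even though every off-diagonal ratio is strictly below $1$ (think of $x\mapsto x-x^{3}$ near $0$). Moreover the ratio is not a function of the normalized difference $(u-\tilde u)/\norm u-\tilde u\norm_{1}$ alone, since $\Phi$ is nonlinear, so compactness of that set of directions by itself proves nothing. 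What rescues the statement is the quantitative structure of the scheme, which your sketch never uses: by the Taylor expansion in the proof of Proposition~\ref{contraction}, the difference $z^{k}$ of two solutions evolves under matrices with nonnegative entries $\frac{1}{2}(1\pm\lambda\delta^{k}_{m})$ and unit column sums, and under the CFL condition these entries are bounded below by a positive constant depending only on $\Delta$; after one period ($2K$ steps, $2K\ge 2N$) every component of $z^{2K}$ is a positive combination of all components of $z^{0}$ with weights bounded below by a $\Delta$-dependent constant, and since $z^{0}$ has zero mean a Dobrushin-type (ergodicity coefficient) estimate gives $\norm z^{2K}\norm_{1}\le\rho\norm z^{0}\norm_{1}$ with $\rho=\rho(\Delta)<1$ uniform in the pair of solutions. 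That, not pointwise strictness plus compactness, is the missing idea (and is in effect what the cited proof in \cite{Nishida-Soga} exploits). Secondarily, your $L^{1}\to L^{\infty}$ interpolation is misjustified: the difference of two one-sided Lipschitz functions admits no two-sided oscillation control (two nearby shocks make $\norm z\norm_{L^{\infty}}$ of order one while $\norm z\norm_{L^{1}}$ is arbitrarily small), but this step is also unnecessary here, since $\Delta$ is fixed and $a,\rho$ may depend on it: for step functions on the fixed grid one has directly $\norm z\norm_{L^{\infty}}\le(2\dx)^{-1}\norm z\norm_{L^{1}}$.
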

%%%%%%%%%%%%%%%%%
\begin{proof}[{\bf Proof.}]
The map $\phi^1_\Delta(\cdot;c)$ is actually a map from $\R^N$ to $\R^N$, since the step functions have only $N$ different values at most.
Let $B_r$ be the set of all $x\in\R^N$ with $\norm x\norm_\infty\le r$.
If $r\ge\beta_1(1)+1$, then  the map $\phi^1_\Delta(\cdot;c)$ is actually a map from $B_r$ to $B_r$.
Therefore, we obtain a fixed point through Brouwer's fixed point theorem.
By Proposition~\ref{contraction}, periodic solutions must be unique.
Exponential decay can be proved in the same way as (5) of Theorem~2.1 in \cite{Nishida-Soga}.
\end{proof}
\medskip

\noindent {\bf Remark.} It is likely that in general $\rho$ becomes arbitrarily close to unity as $\Delta$ tends to zero. Numerical experiments imply such a property of $\rho$ \cite{Nishida-Soga}. The uniqueness does not hold for the exact equation (\ref{CL2}) in general. There may exist time periodic entropy solutions of~(\ref{CL2}) with the minimum period greater than unity \cite{Bernard}.
\medskip

\noindent  The following theorem for the discrete Hamilton-Jacobi equation is like the weak KAM theorem.
%%%%%%%%%%%%%%%%%%
\begin{Thm}\label{fixed-point2}
Take $r\ge\beta_1(1)+1$ and fix $\Delta=(\dx,\dt)$ so that Theorems~\ref{SVA} and~\ref{Main-stability} hold.
Then, for each $c$ there exists a constant $\bar{h}_\Delta(c )\in\R$ such that if $h(c )=\bar{h}_\Delta(c )$, we have a fixed point $\bar{v}^0_\Delta\in Lip_r(\T)$ of $\psi_\Delta^1(\cdot;c)$, which yields a time periodic difference solution $\psi^t_\Delta(\bar{v}^0_\Delta;c)$.
Such a periodic solution is unique with respect to $c$ up to constants.
Any other solution $\psi^t_\Delta(v^0;c)$ exponentially falls into a periodic state; namely, for the $\rho\in(0,1)$ and $a>0$ in Theorem~\ref{fixed-point1} and for $d\in\R$ depending on $(v^0;c)$ we have
$\norm\psi^t_\Delta(v^0;c)-\psi^t_\Delta(\bar{v}^0_\Delta+dI_1;c)\norm_{C^0}\le a\rho^t\mbox{ for $t\in\N$}$, where $I_1(x):=1$ and $\psi^t_\Delta(v^0+dI_1;c)=\psi^t_\Delta(v^0;c)+dI_1$.
\end{Thm}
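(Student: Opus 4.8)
The plan is to reduce everything about $\psi^1_\Delta(\cdot;c)$ to the already-established facts about $\phi^1_\Delta(\cdot;c)$ in Theorem~\ref{fixed-point1}, using the relation $D_xv^k_{m+1}=u^k_m$, and then to pin down the correct normalization constant $\bar h_\Delta(c)$. First I would record the translation invariance: since $H$ depends on $v$ only through $v_x$, adding the constant function $dI_1$ to any initial datum $v^0$ produces $\psi^t_\Delta(v^0+dI_1;c)=\psi^t_\Delta(v^0;c)+dI_1$, and the associated entropy-type data $u^0=v^0_x$ is unchanged. Thus $\psi^t_\Delta$ descends to a well-defined map on $Lip_r(\T)/\R I_1$, and under the identification $v^0\mapsto u^0=v^0_x$ this quotient map is conjugate to $\phi^t_\Delta(\cdot;c)$ acting on $L^\infty_{r,0}(\T)$ (the zero-mean condition being exactly the compatibility condition for a primitive on $\T$). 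So the existence and uniqueness (mod constants) of a periodic $\bar v^0_\Delta$ will follow the moment we fix the scalar degree of freedom that the quotient kills, namely the additive constant picked up under one time step.

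The key step is therefore the identification of $\bar h_\Delta(c)$. Take $\bar u^0_\Delta$ the unique fixed point of $\phi^1_\Delta(\cdot;c)$ from Theorem~\ref{fixed-point1}, and let $\bar v^0_\Delta$ be a primitive of $\bar u^0_\Delta$ with values bounded by $r$ (possible since $\bar u^0_\Delta\in L^\infty_{r,0}$ has zero mean; shrink the mesh/adjust constants so that $r\ge\beta_1(1)+1$ controls both the sup of the function and of its derivative, which it does after the earlier normalizations). Run the scheme \eqref{2HJ-Delta} for one unit of time with $h(c)=0$ first; because $(\bar v^0_\Delta)_x=\bar u^0_\Delta$ is fixed by $\phi^1_\Delta$, the output $\psi^1_\Delta(\bar v^0_\Delta;c)\big|_{h=0}$ has the same spatial increments as $\bar v^0_\Delta$, hence differs from it by a spatial constant; call that constant $\bar h_\Delta(c)$ (it is a genuine number, independent of the choice of primitive, by the explicit form of \eqref{2HJ-Delta} and the conservation structure). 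Then, since the term $h(c)$ enters \eqref{2HJ-Delta} additively and contributes exactly $h(c)\,t_{l+1}$ to $v^{l+1}_n$ (visible from the stochastic representation in Theorem~\ref{SVA}(2)), choosing $h(c)=\bar h_\Delta(c)$ makes $\bar v^0_\Delta$ a genuine fixed point of $\psi^1_\Delta(\cdot;c)$, and $\psi^t_\Delta(\bar v^0_\Delta;c)$ is the desired time-periodic difference viscosity solution.

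Uniqueness up to constants and the exponential approach then transfer directly. If $\hat v^0$ is another periodic point of $\psi^1_\Delta(\cdot;c)$ for the same $h(c)=\bar h_\Delta(c)$, then $\hat v^0_x$ is a periodic point of $\phi^1_\Delta(\cdot;c)$, so by Theorem~\ref{fixed-point1} $\hat v^0_x=\bar u^0_\Delta$, whence $\hat v^0=\bar v^0_\Delta+dI_1$ for some $d\in\R$; moreover the additive constant produced in one step is the same for both by the computation above, so $d$ is preserved, confirming periodicity is consistent. For a general datum $v^0\in Lip_r(\T)$, apply Theorem~\ref{fixed-point1} to $u^0=v^0_x$ to get $\norm\phi^t_\Delta(v^0_x;c)-\bar u^0_\Delta\norm_{L^\infty}\le a\rho^t$; integrating in $x$ over $\T$ (the integrand having zero mean, so no secular term) gives $\norm\psi^t_\Delta(v^0;c)-(\psi^t_\Delta(\bar v^0_\Delta;c)+d_tI_1)\norm_{C^0}\le a\rho^t$ up to renaming $a$, and a short argument — comparing the scalar equation \eqref{2HJ-Delta} satisfied by the difference $\psi^t_\Delta(v^0;c)-\psi^t_\Delta(\bar v^0_\Delta;c)$, whose spatial increments decay geometrically — shows the shift $d_t$ converges geometrically to a fixed $d$, so one may absorb everything into $\bar v^0_\Delta+dI_1$ as stated. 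The main obstacle I anticipate is precisely this last bookkeeping: verifying that the additive constant $\bar h_\Delta(c)$ extracted from one step is well-defined (independent of primitive and of the spatial base point) and that for arbitrary data the sequence of shifts $d_t$ stabilizes rather than drifting; both hinge on the conservation identity $\sum_{m;k}u^k_m\cdot2\dx=0$ and the discrete structure of \eqref{2HJ-Delta}, and should be handled by writing \eqref{2HJ-Delta} for the difference of two solutions and summing.
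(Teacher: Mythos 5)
Your proposal is correct in substance, but it takes a genuinely different route from the paper, so let me compare the two. The paper does not construct $\bar v^0_\Delta$ from $\bar u^0_\Delta$ at all: imitating Fathi's proof of the weak KAM theorem, it passes to the quotient $Lip_r(\T)/\sim$ (functions modulo additive constants), which is a compact convex subset of $C^0(\T)/\sim$, applies Schauder's fixed point theorem to the induced map $\hat{\psi}^1_\Delta(\cdot;c)$, and reads off $\bar h_\Delta(c)=h(c)+b(c)$ from the resulting relation $\bar v^0_\Delta=\psi^1_\Delta(\bar v^0_\Delta;c)+b(c)I_1$; for the long-time statement it uses monotonicity of the scheme to sandwich $\psi^t_\Delta(v^0;c)$ between $\bar v^0_\Delta+b^jI_1$ and $\bar v^0_\Delta+a^jI_1$ with bounded monotone sequences $a^j,b^j$, picks $d$ between their limits so that $\bar v^0_\Delta+dI_1$ touches $\psi^t_\Delta(v^0;c)$ at some point $x_0$ for every $t\in\N$, and then bounds the difference at any $x$ by $\bigl|\int_{x_0}^{x}|\phi^t_\Delta(v^0_x;c)-\phi^t_\Delta(\bar u^0_\Delta;c)|\,dy\bigr|\le a\rho^t$. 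Your route instead leans on the equivalence $D_xv^k_{m+1}=u^k_m$: you take a primitive of the fixed point $\bar u^0_\Delta$ of Theorem~\ref{fixed-point1}, observe that one unit of evolution moves it by a spatial constant, and define $\bar h_\Delta(c)$ from that drift (watch the sign convention when you then say ``choosing $h(c)=\bar h_\Delta(c)$''). This is more constructive --- no Schauder, $\bar h_\Delta(c)$ is identified operationally in a way consistent with Claim~2 of Theorem~\ref{effective}, and uniqueness modulo constants falls out of the uniqueness of $\bar u^0_\Delta$. The one place your sketch still owes an argument is the stabilization of the shift: geometric decay of the spatial increments controls only the oscillation of $\psi^t_\Delta(v^0;c)-\psi^t_\Delta(\bar v^0_\Delta;c)$, not its mean $d_t$, so you must, as you indicate, write the scheme for the difference of the two solutions and average in $m$; the per-step drift of the mean is at most $\dt\,\sup|H_p|\,\norm u^k-\bar u^k\norm_{L^\infty}$, whose tail sum is $O(\rho^t)$ by Theorem~\ref{fixed-point1} (alternatively, nonexpansiveness of $\psi^1_\Delta$ in $C^0$, which follows from monotonicity plus translation invariance, gives $|d_{t+1}-d_t|\le a\rho^{t+1}+a\rho^t$ directly). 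That closes the proof, but with a constant of order $a/(1-\rho)$ rather than the exact $a$ of Theorem~\ref{fixed-point1} appearing in the statement; if you want the sharp constant, replace the mean normalization by the paper's choice of $d$ via the monotone sandwich, which forces a touching point at every integer time and yields $a\rho^t$ on the nose.
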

%%%%%%%%%%%%%%%%%
\begin{proof}[{\bf Proof.}] We imitate the proof of the weak KAM theorem \cite{Fathi}.
Let us write $v\sim w$ for $v,w\in C^0(\T)$ if there exists $b\in\R$ such that $w=v+bI_1$.
We introduce $\hat{v}:=\{w\in C^0(\T)\,|\,w\sim v\}$, $\norm \hat{v}\norm:=\inf_{w\in\hat{v}}\norm w\norm_{C^0(\T)}$, $\hat{C^0(\T)}:=C^0(\T)/\sim$, and $\hat{Lip_r(\T)}:=Lip_r(\T)/\sim$.
From the Arzela-Ascoli theorem it follows that $\hat{Lip_r(\T)}$ is a compact convex subset of the Banach space $\hat{C^0(\T)}$.
Due to the property $\psi^t_\Delta(v^0+dI_1;c)=\psi^t_\Delta(v^0;c)+dI_1$, we have
$$\psi^1_\Delta(v^0;c)\sim\psi^1_\Delta(w^0;c)\mbox{ for all $v^0,w^0\in\hat{v}^0$}.$$
Hence, the map
$$\hat{\psi}^1_\Delta(\cdot;c):\hat{Lip_r(\T)}\to\hat{Lip_r(\T)},\,\,\,\hat{\psi}^1_\Delta(\hat{v}^0;c):=\{w\in Lip_r(\T)\,|\,w\sim \psi^1_\Delta(v^0;c)\} \,\,\,(v^0\in\hat{v}^0)$$
is well defined and continuous.
By Schauder's fixed point theorem, we have a fixed point $\hat{\bar{v}}^0_\Delta$ satisfying
$\hat{\psi}^1_\Delta(\hat{\bar{v}}^0_\Delta;c)=\hat{\bar{v}}^0_\Delta$.
Therefore, we have an element $\bar{v}^0_\Delta\in\hat{\bar{v}}^0_\Delta$ and  constant $b(c )\in\R$ such that
$$\bar{v}^0_\Delta=\psi^1_\Delta(\bar{v}^0_\Delta;c)+ b(c )I_1.$$
This relation means that $\bar{v}^0_\Delta$ yields a time periodic solution of  (\ref{2HJ-Delta}) with $h(c)+b(c)$ instead of $h(c )$.

Note that $\psi^t_\Delta(v^0;c)\le\psi^t_\Delta(\tilde{v}^0;c)$ if $v^0\le\tilde{v}^0$.
Let $a^0,b^0$ be constants such that for all $x\in\T$ we have
$$\bar{v}^0_\Delta(x)+b^0\le v^0(x)\le \bar{v}^0_\Delta(x)+a^0,$$
with at least one point attaining the equality in each inequality.
Then, we have $\bar{v}^0_\Delta(x)+b^0\le\psi^1_\Delta(v^0;c)(x)\le \bar{v}^0_\Delta(x)+a^0$ for all $x\in\T$.
Let $a^1,b^1$ be constants such that for all $x\in\T$ we have
$$\bar{v}^0_\Delta(x)+b^1\le \psi^1_\Delta(v^0;c)(x) \le \bar{v}^0_\Delta(x)+a^1,$$
with at least one point attaining the equality in each inequality.
Note that $a^1\le a^0$ and $b^1\ge b^0$.
Then, we have $\bar{v}^0_\Delta(x)+b^1\le\psi^2_\Delta(v^0;c)(x)\le \bar{v}^0_\Delta(x)+a^1$ for all $x\in\T$.
In this way, we obtain the bounded monotone sequences $a^j$ and $b^j$.\
Take $d$ such that $\lim_{j\to\infty} b^j\le d\le \lim_{j\to\infty} a^j$.
Then, $\bar{v}^0_\Delta+dI_1$ and $ \psi^t_\Delta(v^0;c)$ coincide for at least one point and for any $t\in\N$.
Let $x_0\in\T$ be such that $\bar{v}^0_\Delta(x_0)+d= \psi^t_\Delta(v^0;c)(x_0)$.
Then, for all $x\in\T$ and $t\in\N$ we obtain
$$|\psi^t_\Delta(v^0;c)(x)-\psi^t_\Delta(\bar{v}^0_\Delta+dI_1;c)(x)|\le \Big|\int_{x_0}^x|\phi^t_\Delta(v^0_x;c)-\phi^t_\Delta(\bar{u}^0_\Delta;c)|dy\Big|\le a\rho^t.$$
 \end{proof}
 %%%%%%%%%%%%%%%%%
\noindent We introduce the map $\bar{h}_\Delta(c ):c\mapsto h(c )+b(c )$, which is the effective Hamiltonian of the difference Hamilton-Jacobi equation (\ref{HJ-Delta}).
We remark that $\bar{h}_\Delta(c )$ plays an important role in the numerical analysis of the weak KAM theory.
Hence, its properties are meaningful to investigate.
%%%%%%%%%%%%%%%%%%%%%%%%%
\subsection{Effective Hamiltonian}
Below is the characterization of $\bar{h}_\Delta(c )$, which is very similar to that of the effective Hamiltonian  $\bar{h}(c )$ of the exact Hamilton-Jacobi equations (\ref{HJ2}).
We refer to \cite{Bernard} for the characterization of $\bar{h}(c )$.
%%%%%%%%%%%%%%
\begin{Thm}\label{effective}
\begin{enumerate}
\item $h(c )=\bar{h}_\Delta(c )$ is the unique value for which (\ref{HJ-Delta}) admits a space-time periodic difference solution.
\item $\bar{h}_\Delta( c)$ is the averaged Hamiltonian.
That is, for the space-time periodic difference solution $\bar{u}^k_m$ of (\ref{CL-Delta}) we have
$$\bar{h}_\Delta( c)=\sum_{0\le k<2K}\sum_{m;k}H(x_m,t_k,c+\bar{u}^k_m(c ))\cdot2\dx\dt.$$
\item Let $v^{l+1}_{n}(c )$ be a time-global solution of the difference equation
\begin{eqnarray}\label{373}
D_tv^{k+1}_m+H(x_m,t_k,c+D_xv^k_{m+1})=0.
\end{eqnarray}
Then, for all $n$ we have
$$\lim_{l\to\infty}\frac{v^{l+1}_{n}(c )}{t_{l+1}}=-\bar{h}_\Delta(c ).$$
\item $\bar{h}_\Delta(c )$ is a  convex $C^1$-function.
\item $\bar{h}_\Delta( c)$ uniformly converges to the exact effective Hamiltonian $\bar{h}( c )$ of (\ref{HJ2}) as $\Delta\to0$:
$$\sup_{c\in[c_0,c_1]}|\bar{h}_\Delta( c)-\bar{h}( c)|\le \beta_3\sqrt{\dx}.$$
\end{enumerate}
\end{Thm}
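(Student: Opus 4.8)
The plan is to establish the five claims in sequence, using the stochastic and variational machinery of Theorem~\ref{SVA} together with the fixed-point analysis of Theorems~\ref{fixed-point1}--\ref{fixed-point2}. Claim~1 is immediate from Theorem~\ref{fixed-point2}: a space-time periodic difference solution of~(\ref{HJ-Delta}) exists precisely when $h(c)=\bar h_\Delta(c)$, and if $h(c)=h_1$ and $h(c)=h_2$ both admitted periodic solutions $v^k_{m+1}$, $w^k_{m+1}$, then subtracting the two schemes and using $D_tI_1=0$ forces $h_1=h_2$ (equivalently, $\psi^1_\Delta(v^0;c)=v^0+(h_1-\bar h_\Delta)I_1$ pins down $h_1$). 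For Claim~2, I would sum the difference equation~(\ref{CL-Delta}) — or rather its equivalent formulation via $u^k_m=D_xv^k_{m+1}$ — over one spatial period and over one time period $0\le k<2K$. The telescoping of the flux differences in $m$ kills the spatial term by periodicity, and the telescoping of the $D_t$ term in $k$ kills the time-difference term by time-periodicity, leaving exactly $\bar h_\Delta(c)$ equated to the space-time average of $H(x_m,t_k,c+\bar u^k_m)$ against the weights $2\dx\dt$. One must be slightly careful with the even/odd grid bookkeeping and the averaging in $D_t$, but this is routine.

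For Claim~3, the idea is the standard sublinear-growth argument: given any time-global solution $v^{l+1}_n(c)$ of~(\ref{373}), compare it with the space-time periodic solution $\bar v^{l+1}_n$ of~(\ref{HJ-Delta}) with $h=\bar h_\Delta(c)$. Writing $\tilde v^{l+1}_n:=v^{l+1}_n(c)+\bar h_\Delta(c)t_{l+1}$, one checks $\tilde v$ solves~(\ref{HJ-Delta}), so by the contraction/ordering property of $\psi^t_\Delta$ (used in the proof of Theorem~\ref{fixed-point2}), $\tilde v^{l+1}_n-\bar v^{l+1}_n$ stays bounded in $l$ (it is trapped between the monotone sequences $a^j,b^j$ there). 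Since $\bar v^{l+1}_n$ itself is bounded by periodicity, $v^{l+1}_n(c)+\bar h_\Delta(c)t_{l+1}=O(1)$, which gives $v^{l+1}_n(c)/t_{l+1}\to-\bar h_\Delta(c)$.

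Claim~4 is the delicate structural part. Convexity of $\bar h_\Delta$ should follow from its characterization in Claim~2 as a value obtained by minimizing a linear-in-$c$ action functional: the stochastic and variational formula (Theorem~\ref{SVA}, Claims~1--2) expresses $v^{l+1}_n(c)$ as an infimum over velocity fields $\xi$ of $E_{\mu(\cdot;\xi)}[\sum L^{(c)}(\gamma^k,t_{k-1},\xi^k)\dt+v^0_\Delta(\gamma^0)]$, and since $L^{(c)}=L-c\xi$ depends affinely on $c$, the infimum is concave in $c$; dividing by $t_{l+1}$ and letting $l\to\infty$ (Claim~3) shows $-\bar h_\Delta$ is concave, i.e., $\bar h_\Delta$ is convex. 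For the $C^1$ regularity I would argue that the minimizing periodic measure (the analogue of a Mather measure) has a well-defined rotation number $\int\xi^\ast\,d\mu^\ast$, and that differentiability of $\bar h_\Delta$ at $c$ amounts to uniqueness of this rotation number among minimizers — which holds here because the fixed point $\bar u^0_\Delta$ is unique for each $c$ (Theorem~\ref{fixed-point1}), so the associated minimizing random walk and hence the rotation number are unique; then one invokes the standard fact that a convex function with a unique subdifferential at every point is $C^1$. Finally, Claim~5: combine the uniform error estimate $\sup_t\norm v_\Delta(\cdot,t)-v(\cdot,t)\norm_{C^0}\le\beta_2\sqrt{\dx}$ from Claim~5 of Theorem~\ref{SVA} with the asymptotic characterizations $v^{l+1}_n(c)/t_{l+1}\to-\bar h_\Delta(c)$ (Claim~3 above) and its continuum analogue $v(x,t)/t\to-\bar h(c)$ (the characterization of $\bar h$ from~\cite{Bernard}); taking $t$ large but fixed, the $O(\sqrt{\dx})$ bound on $\norm v_\Delta(\cdot,t)-v(\cdot,t)\norm_{C^0}$ transfers to an $O(\sqrt{\dx})$ bound on $|\bar h_\Delta(c)-\bar h(c)|$, uniformly in $c\in[c_0,c_1]$ since $\beta_2$ is $c$-independent; this yields $\beta_3$. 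The main obstacle is Claim~4: proving the $C^1$ property rigorously requires carefully identifying the discrete Mather measure, showing its rotation number is the (unique) subderivative of $\bar h_\Delta$, and handling the fact that the minimizers of the discrete problem do not solve an Euler--Lagrange equation — one must lean on the uniqueness from Theorem~\ref{fixed-point1} and on continuity (Propositions~\ref{continuity},~\ref{continuity-Delta}) rather than on classical ODE arguments.
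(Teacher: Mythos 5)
Your Claims 1--3 and the convexity half of Claim 4 are sound and essentially the paper's own arguments: Claim 2 by summing the scheme over a space-time period, Claim 3 by comparison with the solution of the scheme with right-hand side $\bar{h}_\Delta(c)$ and invoking Theorem~\ref{fixed-point2}, convexity from the variational formula being an infimum of functions affine in $c$. Your Claim 1 takes a slightly different route (forward iteration of $\psi^1_\Delta$ plus the exponential convergence of Theorem~\ref{fixed-point2}, rather than the paper's backward extension of the two periodic solutions and the variational inequality), and once the parenthetical relation is iterated to give linear growth of $\psi^t_\Delta(v^0;c)$ it does close. The two remaining claims, however, have genuine gaps.

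For the $C^1$ part of Claim 4 you assert that differentiability of $\bar{h}_\Delta$ at $c$ is \emph{equivalent} to uniqueness of the rotation number of minimizing (discrete Mather) measures, and that uniqueness of the fixed point $\bar{u}^0_\Delta$ gives this. Neither step is available with the tools at hand: there is no discrete analogue here of the representation of $\bar{h}_\Delta(c)$ as a minimum over holonomic/invariant measures, no identification of the subdifferential $\partial\bar{h}_\Delta(c)$ with rotation numbers of minimizers, and no graph-type theorem saying that minimizing measures are carried by the periodic solution $\bar{u}_\Delta$; each of these is a substantive result that your sketch names but does not prove, and the minimizers of the discrete problem do not satisfy an Euler--Lagrange equation, so the continuous proofs do not transfer verbatim. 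The paper takes a much more elementary route: it differentiates the averaged-Hamiltonian identity of Claim 2 in $c$, using that $c\mapsto c+\bar{u}^k_m(c)$ is $C^1$ for each $m,k$ --- a finite-dimensional parameter-dependence statement for the unique, exponentially stable fixed point, proved as in (6) of Theorem 2.1 of Nishida--Soga. If you keep your weak-KAM style argument you must build all of that measure machinery from scratch.

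For Claim 5 your transfer argument with $t$ ``large but fixed'' cannot produce the rate $O(\sqrt{\dx})$. Writing the comparison out, you get
$$|\bar{h}_\Delta(c)-\bar{h}(c)|\le \frac{|v_\Delta(x,t)+\bar{h}_\Delta(c)t|+|v(x,t)+\bar{h}(c)t|}{t}+\frac{\beta_2(t)}{t}\sqrt{\dx},$$
and the two numerator terms in the first fraction are only $O(1)$ (they measure the distance of the orbits from the periodic states, which is not small in $\dx$); so for fixed $t$ an irreducible $O(1/t)$ error remains and you obtain only uniform convergence of $\bar{h}_\Delta$ to $\bar{h}$, not the claimed bound $\beta_3\sqrt{\dx}$. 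Letting $t$ grow with $\dx^{-1/2}$ would require knowing how $\beta_2$ in Claim 5 of Theorem~\ref{SVA} grows with $T$, which is not established anywhere. The paper instead compares the two space-time periodic solutions directly over one period: at a maximizer (resp.\ minimizer) of $\bar{v}_\Delta(\cdot,1)-\bar{v}(\cdot,1)$ it inserts the continuous minimizing curve (resp.\ the averaged path $\eta$ of the minimizing random walk) as a competitor in the other variational problem, controls the exchange error by $O(\sqrt{\dx})$ via Proposition~\ref{eta} as in (\ref{375-2}) and (\ref{375-3}), and then uses periodicity and the extremal choice of the point to give the initial-data discrepancy a sign so that it drops out, leaving $\pm(\bar{h}_\Delta(c)-\bar{h}(c))\le C\sqrt{\dx}$ directly. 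Some device of this kind, which eliminates the unknown distance-to-periodic-state terms rather than dividing them by $t$, is needed to get the stated rate.
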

%%%%%%%%%%%%%%
\begin{proof}[{\bf Proof.}]
1. Let $\tilde{\bar{v}}^k_{m+1}$ be another space-time periodic solution of (\ref{HJ-Delta}) with $h(c)=\tilde{\bar{h}}_\Delta(c )$.
Extending the periodic solutions to the entire odd grid, we have the following stochastic and variational representation formulas up to any negative time index $l_0$:
\begin{eqnarray*}
\bar{v}^{l+1}_n&=&E_{\mu(\cdot;\xi^\ast)}\Big[\sum_{l_0<k\le l+1}L^{(c)}(\gamma^k,t_{k-1},\xi^\ast{}^k_{m(\gamma^k)})\dt +\bar{v}^{l_0}_{m(\gamma^{l_0})}\Big]+\bar{h}_\Delta(c )(t_{l+1}-t_{l_0}),\\
\tilde{\bar{v}}^{l+1}_n&=&E_{\mu(\cdot;\tilde{\xi}^\ast)}\Big[\sum_{l_0<k\le l+1}L^{(c)}(\gamma^k,t_{k-1},\tilde{\xi}^\ast{}^k_{m(\gamma^k)})\dt +\tilde{\bar{v}}^{l_0}_{m(\gamma^{l_0})}\Big]+\tilde{\bar{h}}_\Delta(c )(t_{l+1}-t_{l_0}).
\end{eqnarray*}
By the variational property, we have
\begin{eqnarray}\label{3aa}
\tilde{\bar{v}}^{l+1}_n-{\bar{v}}^{l+1}_n\le E_{\mu(\cdot;\xi^\ast)}\Big[\tilde{\bar{v}}^{l_0}_{m(\gamma^{l_0})} - \bar{v}^{l_0}_{m(\gamma^{l_0})}\Big]+(\tilde{\bar{h}}_\Delta(c )-\bar{h}_\Delta(c ))(t_{l+1}-t_{l_0}).
\end{eqnarray}
Note that $\bar{v}^k_{m+1},\tilde{\bar{v}}^k_{m+1}$ are periodic and hence bounded.
Dividing (\ref{3aa}) by $t_{l+1}-t_{l_0}$ and letting $l_0\to-\infty$, we obtain
$$0\le\tilde{\bar{h}}_\Delta(c )-\bar{h}_\Delta(c ).$$
Similar reasoning yields the converse inequality.

2. Since $\bar{v}^k_{m+1}$ satisfies $D_t\bar{v}^{k+1}_{m}+H(x_m,t_k,c+D_x\bar{v}^k_{m+1})=\bar{h}_\Delta(c )$, we have
$$\bar{h}_\Delta(c )= \sum_{0\le k<2K}\sum_{m;k}D_t\bar{v}^{k+1}_{m}\cdot2\dx\dt+\sum_{0\le k<2K}\sum_{m;k}H(x_m,t_k,c+D_x\bar{v}^k_{m+1})\cdot2\dx\dt.$$
The first term on the right-hand side is equal to zero due to the periodicity of $\bar{v}^k_{m+1}$.

3. Let $\tilde{v}^{l+1}_{n}(c )$ be the solution of $D_t\tilde{v}^{k+1}_m+H(x_m,t_k,c+D_x\tilde{v}^k_{m+1})=\bar{h}_\Delta(c )$ with the same mesh size as (\ref{373}) and with $\tilde{v}^{0}_{m+1}=v^0_{m+1}$.
From Theorem~\ref{fixed-point2} it follows that we have $|\tilde{v}^{l+1}_n(c)-\bar{v}^{l+1}_n(c )|\to0$ as $l\to\infty$, adding a constant if necessary.
Therefore, $\tilde{v}^{l+1}_n$ is bounded for $l\to\infty$.
Since
\begin{eqnarray*}
v^{l+1}_n(c )&=&\inf_{\xi}E_{\mu(\cdot;\xi)}\Big[\sum_{0<k\le l+1}L^{(c)}(\gamma^k,t_{k-1},\xi^k_{m(\gamma^k)})\dt +v^0_{m(\gamma^0)}\Big], \\
\tilde{v}^{l+1}_n(c )&=&\inf_{\xi}E_{\mu(\cdot;\xi)}\Big[\sum_{0<k\le l+1}L^{(c)}(\gamma^k,t_{k-1},\xi^k_{m(\gamma^k)})\dt +v^0_{m(\gamma^0)}\Big]+\bar{h}_\Delta(c )t_{l+1},
\end{eqnarray*}
and the minimizing velocity fields of these are the same, we obtain $v^{l+1}_n(c)-\tilde{v}^{l+1}_n(c)=-\bar{h}_\Delta(c )t_{l+1}$.

4. Following the proof of (6) of Theorem~2.1 in~\cite{Nishida-Soga}, we can prove that $c+\bu^k_m(c )$ is a $C^1$-function of $c$ for each $m,k$.
Therefore, Claim~2 yields $C^1$-regularity of $\bar{h}_\Delta$.
Let $v^{l+1}_n(c )$ be a solution of (\ref{373}) and fix $n$.
We show that the map $c\mapsto v^{l+1}_n(c )$ is a concave function for each $l+1\ge 1$.
Let $\xi^\ast$ be the minimizing velocity field for $v^{l+1}_{n}(c^\ast)$ with $c^\ast:=\theta c+(1-\theta)\tilde{c}$, $\theta\in[0,1]$.
Then, we have
\begin{eqnarray*}
&&v^{l+1}_n(c^\ast )-\{\theta v^{l+1}_n(c )+(1-\theta)v^{l+1}_n(\tilde{c})\}\\
&&\ge\theta E_{\mu(\cdot;\xi^\ast)}\Big[\sum_{0<k\le l+1}-(c^\ast-c)\xi^\ast{}^{k}_{m(\gamma^k)}\dt\Big]
+(1-\theta)E_{\mu(\cdot;\xi^\ast)}\Big[\sum_{0<k\le l+1}-(c^\ast-\tilde{c})\xi^\ast{}^{k}_{m(\gamma^k)}\dt\Big]\\
&&=0.
\end{eqnarray*}
Therefore, the map $c\mapsto v^{l+1}_n(c )/t_{l+1}$ is also a concave function and 
$$\bar{h}_\Delta(c)=-\lim_{l\to\infty}\frac{v^{l+1}_n(c )}{t_{l+1}}$$
 is a convex function.

5. Hereinafter $b_1,b_2,\ldots$ are positive constants independent of $\Delta$ and $c$.
For each $x\in\T$, we have $m$ such that $x\in[x_{m+1},x_{m+3})$.
Note that $\bar{v}^{2K}_{n_\ast+1}\le\bar{v}_\Delta(x,1)\le\bar{v}^{2K}_{n^\ast+1}$ with $(n_\ast,n^\ast)=(m,m+2)$ or $(n_\ast,n^\ast)=(m+2,m)$ and
\begin{eqnarray}\label{375-1}
\qquad \bar{v}^{2K}_{n_\ast+1}-\bar{v}(x_{n_\ast+1},1)-2r\dx\le \bar{v}_\Delta(x,1)-\bar{v}(x,1)\le \bar{v}^{2K}_{n^\ast+1}-\bar{v}(x_{n^\ast+1},1)+2r\dx.
\end{eqnarray}
\indent Let $x\in\T$ attain $\max_{y\in\T}(\bar{v}_\Delta(y,1)-\bar{v}(y,1))$ and let $n^\ast$ be defined in the above manner with this $x$.
Let $\gamma^\ast$ be a minimizing curve for $\bar{v}(x_{n^\ast+1},t)$.
Define $\xi$ as $\xi^k_m:=\gamma^\ast{}'(t_k)$.
Note that the $\eta^k(\gamma)$ defined by this $\xi$ satisfies $|\eta^k(\gamma)-\gamma^\ast(t_k)|\le b_1\dx$ for any $0\le k\le 2K$.
By the representation formulas and Proposition~\ref{eta}, we have
$$\bar{v}(x_{n^\ast+1},1)=\int_0^1L^{(c)}(\gamma^\ast(s),s,\gamma^\ast{}'(s))ds+\bar{v}(\gamma^\ast(0),0)+\bar{h}(c ),\qquad\qquad\qquad$$
\begin{eqnarray}\label{375-2}
&&\bar{v}^{2K}_{n^\ast+1}\le E_{\mu(\cdot;\xi)}\Big[\sum_{0<k\le 2K}L^{(c)}(\gamma^k,t_{k-1},\xi^k_{m(\gamma^k)})\dt +\bar{v}_\Delta(\gamma^0,0)\Big]+\bar{h}_\Delta(c )\\\nonumber
&&\le E_{\mu(\cdot;\xi)}\Big[\sum_{0<k\le 2K}L^{(c)}(\eta^k(\gamma),t_{k-1},\xi^k_{m(\gamma^k)})\dt +\bar{v}_\Delta(\eta^0(\gamma),0)\Big]+\bar{h}_\Delta(c )+b_2\sqrt{\dx}\\\nonumber
&&\le E_{\mu(\cdot;\xi)}\Big[\sum_{0<k\le 2K}L^{(c)}(\gamma^\ast(t_k),t_{k-1},\gamma^\ast{}'(t_k))\dt +\bar{v}_\Delta(\gamma^\ast(0),0)\Big]+\bar{h}_\Delta(c )+b_3\sqrt{\dx}\\\nonumber
&&\le \int^1_0L^{(c)}(\gamma^\ast(s),s,\gamma^\ast{}'(s))ds+\bar{v}_\Delta(\gamma^\ast(0),0)+\bar{h}_\Delta(c )+b_4\sqrt{\dx}.
\end{eqnarray}
Therefore, noting (\ref{375-1}), we have
$$\bar{v}_\Delta(x,1)-\bar{v}(x,1)\le \bar{v}_\Delta(\gamma^\ast(0),0)-\bar{v}(\gamma^\ast(0),0)+\bar{h}_\Delta(c )-\bar{h}(c )+b_5\sqrt{\dx}.$$
From the periodicity of $\bar{v}_\Delta,\bar{v}$ and the above choice of $x$ it follows that $(\bar{v}_\Delta(x,1)-\bar{v}(x,1))- (\bar{v}_\Delta(\gamma^\ast(0),0)-\bar{v}(\gamma^\ast(0),0))\ge 0$. Therefore, we obtain
$$-b_5\sqrt{\dx}\le \bar{h}_\Delta(c )-\bar{h}(c ).$$
\indent Let $x\in\T$ attain $\min_{y\in\T}(\bar{v}_\Delta(y,1)-\bar{v}(y,1))$ and let $n_\ast$ be defined in the above manner with this $x$.
Let $\xi^\ast$ be the minimizing velocity field for $\bar{v}^{2K}_{n_\ast+1}$.
Then, we have
\begin{eqnarray*}
\bar{v}^{2K}_{n_\ast+1}&=&E_{\mu(\cdot;\xi^\ast)}\Big[\sum_{0<k\le 2K}L^{(c)}(\gamma^k,t_{k-1},\xi^\ast{}^k_{m(\gamma^k)})\dt +\bar{v}_\Delta(\gamma^0,0)\Big]+\bar{h}_\Delta(c ),\\
&\ge& E_{\mu(\cdot;\xi^\ast)}\Big[\sum_{0<k\le 2K}L^{(c)}(\eta^k(\gamma),t_{k-1},\xi^\ast{}^k_{m(\gamma^k)})\dt +\bar{v}_\Delta(\eta^0(\gamma),0)\Big]+\bar{h}_\Delta(c )-b_6\sqrt{\dx}.
\end{eqnarray*}
Let $\eta_\Delta(\gamma)$ be the linear interpolation of $\eta^k(\gamma)$.
Note that $\eta_\Delta(\gamma)'(t)=\xi^\ast{}^k_{m(\gamma^k)}$ for $t\in(t_{k-1},t_k)$.
For each $\gamma$ we have
\begin{eqnarray}\label{375-3}
\bar{v}(x_{n_\ast+1},1)&\le&\int_0^1L^{(c)}(\eta_\Delta(\gamma)(s),s,\eta_\Delta(\gamma)'(s))ds+\bar{v}(\eta_\Delta(\gamma)(0),0)+\bar{h}(c )\\\nonumber
&\le&\sum_{0<k\le 2K}L^{(c)}(\eta^k(\gamma),t_{k-1},\xi^\ast{}^k_{m(\gamma^k)})\dt +\bar{v}(\eta^0(\gamma),0)+\bar{h}(c )+b_7\dx.
\end{eqnarray}
Therefore, noting (\ref{375-1}), we have
\begin{eqnarray*}
\bar{v}_\Delta(x,1)-\bar{v}(x,1)\ge E_{\mu(\cdot;\xi^\ast)}\Big[\bar{v}_\Delta(\eta^0(\gamma),0)-\bar{v}(\eta^0(\gamma),0)\Big]+\bar{h}_\Delta(c )-\bar{h}(c )-b_8\sqrt{\dx}.
\end{eqnarray*}
From the periodicity of $\bar{v}_\Delta,\bar{v}$ and the above choice of $x$ it follows that  $(\bar{v}_\Delta(x,1)-\bar{v}(x,1))-(\bar{v}_\Delta(\eta^0(\gamma),0)-\bar{v}(\eta^0(\gamma),0))\le 0$ for all $\gamma$. Thus, we obtain
$$\bar{h}_\Delta(c )-\bar{h}(c )\le b_8\sqrt{\dx}.$$
\end{proof}
%%%%%%%%%%%%%%%%%%
\subsection{Convergence of Periodic Solutions}

We prove that for space-time periodic solutions the difference solutions converge to the exact ones up to a subsequence.
Note that viscosity solutions and entropy solutions with space-time periodicity are not unique with respect to $c$ in general.
The selection problem in finite difference approximation remains open.
It is also challenging to investigate details of the convergence even in the case where the uniqueness holds.
We will make some progress with this issue in the next section.
%%%%%%%%%%%%%%%%%%%%%
\begin{Thm}
There exists a sequence $\Delta=(\dx,\dt)\to0$ for which $\{\bar{v}_\Delta^{(c)}\}$ and $\{\bar{u}_\Delta^{(c)}\}$ converge to a $\Z^2$-periodic viscosity solution $\bar{v}$ of (\ref{HJ2}) with $h(c)=\bar{h}(c )$ and to a $\Z^2$-periodic entropy solution  $\bar{u}=\bar{v}_x$ of (\ref{CL2}), respectively:
$$\sup_{t\in\T}\norm \bar{v}_\Delta^{(c)}(\cdot,t)-\bar{v}(\cdot,t)\norm_{C^0}\to0,\,\,\, \,\,\,\,\sup_{t\in\T}\norm \bar{u}_\Delta^{(c)}(\cdot,t)-\bar{u}(\cdot,t)\norm_{L^1(\T)}\to0.$$
\end{Thm}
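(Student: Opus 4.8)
The plan is to extract a subsequence along which the periodic difference potentials converge uniformly, to identify the limit by comparison with the exact solution operator, and then to upgrade the convergence of the spatial derivatives to $L^1$ using the one-sided Lipschitz bound. The one genuinely new difficulty is that the periodic difference solution of (\ref{2HJ-Delta}) carries \emph{both} the $\Delta$-dependent data $\bar v^0_\Delta:=\bar v_\Delta^{(c)}(\cdot,0)$ \emph{and} the $\Delta$-dependent constant $\bar h_\Delta(c)$, so the error estimate in Claim~5 of Theorem~\ref{SVA} cannot be quoted directly.

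First I would normalize each $\bar v_\Delta^{(c)}$ (say so that $\int_\T\bar v_\Delta^{(c)}(\cdot,0)\,dy=0$) and record uniform a priori bounds: $\bar v_\Delta^{(c)}(\cdot,t)$ is Lipschitz in $x$ with constant $\norm\bar u_\Delta^{(c)}\norm_{L^\infty}$, which is bounded by a fixed constant by the CFL~condition of Theorem~\ref{Main-stability} and Proposition~\ref{barrier}, and two consecutive time levels of (\ref{2HJ-Delta}) differ by $O(\dt)$. Thus $\{\bar v_\Delta^{(c)}\}$ is equibounded on $\T^2$ and equicontinuous up to an $O(\dt)$ error, so (after passing to the linear-in-time interpolants, which differ by $O(\dt)$) the Arzela-Ascoli theorem provides a sequence $\Delta\to0$ along which $\bar v_\Delta^{(c)}\to\bar v$ uniformly on $\T^2$, with $\bar v$ Lipschitz and $1$-periodic in both variables. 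Set $\bar v^0:=\bar v(\cdot,0)$, which lies in $Lip_r(\T)$ since each $\bar v^0_\Delta$ does (its slope is $\norm\bar u_\Delta^{(c)}(\cdot,0)\norm_{L^\infty}\le\beta_1(1)+1\le r$ by Proposition~\ref{barrier}), and put $\bar u:=\bar v_x$.

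Next I would identify $\bar v(\cdot,t)$ with $\psi^t(\bar v^0;c)$, the viscosity solution of (\ref{HJ}) with $h(c)=\bar h(c)$. By Claim~5 of Theorem~\ref{SVA}, applied with the uniformly Lipschitz data $\bar v^0_\Delta$ and with $h(c)=\bar h_\Delta(c)$ (uniformly bounded, since $\bar h_\Delta(c)$ converges), one has $\sup_{t\in[0,1]}\norm\bar v_\Delta^{(c)}(\cdot,t)-\psi^t(\bar v^0_\Delta;c)\norm_{C^0}\le\beta_2\sqrt{\dx}$, where this exact operator uses $h(c)=\bar h_\Delta(c)$. The comparison principle for (\ref{HJ}) — the $L^\infty$-contraction of $\psi^t$ in the data together with the fact that raising $h(c)$ by $\delta$ raises the solution by $\delta t$ — then gives
$$\norm\psi^t(\bar v^0_\Delta;c)-\psi^t(\bar v^0;c)\norm_{C^0}\le\norm\bar v^0_\Delta-\bar v^0\norm_{C^0}+t\,|\bar h_\Delta(c)-\bar h(c)|,$$
which tends to $0$ by $\bar v^0_\Delta\to\bar v^0$ and Claim~5 of Theorem~\ref{effective}. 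Hence $\bar v(\cdot,t)=\psi^t(\bar v^0;c)$ with $h(c)=\bar h(c)$ for $t\in[0,1]$; taking $t=1$ and using $\bar v(\cdot,1)=\bar v(\cdot,0)=\bar v^0$ shows that $\bar v^0$ is a fixed point of $\psi^1(\cdot;c)$, so by the $1$-periodicity of $H$ in $t$ the function $\bar v$ extends to a $\Z^2$-periodic viscosity solution of (\ref{HJ2}) with $h(c)=\bar h(c)$. In particular $\sup_{t\in\T}\norm\bar v_\Delta^{(c)}(\cdot,t)-\bar v(\cdot,t)\norm_{C^0}\to0$.

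Finally, for $\bar u=\bar v_x$: each $\bar u_\Delta^{(c)}$ extends periodically to all time indices, so Claim~4 of Proposition~\ref{entropy-condition} gives $E^k_\Delta\le E^\ast$ for every $k$; together with periodicity and $\norm\bar u_\Delta^{(c)}\norm_{L^\infty}\le u^\ast$ this bounds the total variation of $\bar u_\Delta^{(c)}(\cdot,t)$ uniformly in $t$ and $\Delta$. Hence for each $t$ the family $\{\bar u_\Delta^{(c)}(\cdot,t)\}$ is precompact in $L^1(\T)$ by Helly's theorem, and since $(\bar v_\Delta^{(c)})_x=\bar u_\Delta^{(c)}$ and $\bar v_\Delta^{(c)}\to\bar v$ uniformly, every $L^1$-limit point equals $\bar v_x(\cdot,t)=\bar u(\cdot,t)$; so $\bar u_\Delta^{(c)}(\cdot,t)\to\bar u(\cdot,t)$ in $L^1(\T)$ for each $t$. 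Uniformity in $t$ then follows from a standard $\varepsilon/3$ argument over a finite net in $t$, using that $t\mapsto\bar u_\Delta^{(c)}(\cdot,t)$ is $L^1$-equicontinuous up to an $O(\dt)$ error (the scheme (\ref{2CL-Delta}) perturbs $u_\Delta(\cdot,\cdot)$ by $O(\dt)$ in $L^1$ per time step, by the uniform total-variation and flux bounds) and that $t\mapsto\bar u(\cdot,t)$ is continuous into $L^1(\T)$. I expect the comparison step removing the double $\Delta$-dependence to be the crux, while the one-sided Lipschitz bound of Proposition~\ref{entropy-condition} is exactly the ingredient that converts uniform convergence of the potentials into $L^1$-convergence of their derivatives; only subsequential convergence is claimed because, as noted before the theorem, $\Z^2$-periodic solutions need not be unique in $c$.
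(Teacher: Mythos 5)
Your argument is correct, but it reaches the conclusion by a route that differs from the paper's in two essential places, so a comparison is worth recording. The paper also starts from Arzela--Ascoli, but only on the initial slices $\bar{v}^{(c)}_\Delta(\cdot,0)\to\bar{v}^0$; it then \emph{defines} $\bar{v}$ as the viscosity solution with data $\bar{v}^0$ and $h(c)=\bar{h}(c)$ and proves $\bar{v}^{(c)}_\Delta\to\bar{v}$ uniformly on $\T^2$ by redoing two-sided estimates of the type (\ref{375-2})--(\ref{375-3}), i.e.\ by directly comparing the stochastic representation of $\bar{v}_\Delta$ with the variational representation of $\bar{v}$ and invoking $\bar{h}_\Delta(c)\to\bar{h}(c)$; for the derivative it proves pointwise a.e.\ convergence of $\bar{u}^{(c)}_\Delta$ at regular points (as in Theorem~2.8 of \cite{Soga2}) and gets uniformity in $t$ from the Lipschitz-in-time $L^1$ bound of \cite{Nishida-Soga}. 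You instead (i) compactify first on all of $\T^2$ and identify the limit by quoting Claim~5 of Theorem~\ref{SVA} with the $\Delta$-dependent data $\bar{v}^0_\Delta$ and $h(c)=\bar{h}_\Delta(c)$, then removing the double $\Delta$-dependence by the $L^\infty$-contraction of $\psi^t$ in the data plus the additive shift in $h$ and Claim~5 of Theorem~\ref{effective}; and (ii) obtain the $L^1$-convergence of the derivatives from the one-sided Lipschitz bound of Proposition~\ref{entropy-condition}, Helly/BV compactness, and identification through the primitives. Both routes are sound. Yours is more modular (it reuses the already-proved IVP error estimate and the comparison principle instead of repeating representation-formula estimates), at the price of a few points that should be made explicit: that discretizing $\bar{v}^0_\Delta$ via (\ref{u^0})--(\ref{v^0}) reproduces it exactly, so the difference solution appearing in Claim~5 of Theorem~\ref{SVA} really is $\bar{v}^{(c)}_\Delta$ itself; that $\beta_2$ there can be taken uniform in the constant $h$ because $h$ enters both the exact and discrete value functions purely additively (and $\bar{h}_\Delta(c)$ is uniformly bounded); that Claim~4 of Proposition~\ref{entropy-condition} yields $E^k_\Delta\le E^\ast$ for \emph{all} $k$ only after combining the limsup statement with the $2K$-periodicity of $E^k_\Delta$ (or use Claim~3 instead); and that imposing the extra mesh conditions of Proposition~\ref{entropy-condition} is harmless since only a sequence $\Delta\to0$ is claimed. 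The paper's route avoids BV compactness (consistent with its general philosophy) and its variational estimates are the same ones reused later in the KAM error bound, whereas your route is closer to the classical finite-difference toolkit and converts the uniform convergence of potentials into $L^1$-convergence of derivatives by entirely elementary means.
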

%%%%%%%%%%%%%%%%%%%
\begin{proof}[{\bf Proof.}]
If necessary, we add a constant so that $\bar{v}^{(c)}_\Delta(\cdot,0)$ is bounded by $r$.
Then, $\{\bar{v}_\Delta^{(c)}(\cdot,0)\}$ is a family of functions that are uniformly bounded and equicontinuous.
We have a convergent subsequence, still denoted by $\bar{v}_\Delta^{(c)}(\cdot,0)$: $\bar{v}_\Delta^{(c)}(\cdot,0)\to \bar{v}^0$.
Let $\bar{v}$ be the viscosity solution of (\ref{HJ}) with $v^0=\bar{v}^0$ and $h(c )=\bar{h}(c )$.
Then, we have a minimizing curve such that
$$\bar{v}(x_n,t_{l+1})=\int^{t_{l+1}}_0L^{(c)}(\gamma^\ast(s),s,\gamma^\ast{}'(s))ds+\bar{v}^0(\gamma^\ast(0))+\bar{h}(c )t_{l+1}.$$
By an estimate similar to (\ref{375-2}), we have
$$\bar{v}_\Delta(x_n,t_{l+1})\le \int^{t_{l+1}}_0L^{(c)}(\gamma^\ast(s),s,\gamma^\ast{}'(s))ds+\bar{v}_\Delta(\gamma^\ast(0),0)+\bar{h}_\Delta(c )t_{l+1}+b_1\sqrt{\dx}.$$
Since $\bar{h}_\Delta(c )\to\bar{h}(c )$, we obtain
$$\limsup_{\Delta\to0}\{\bar{v}_\Delta^{(c)}(x_n,t_{l+1})-\bar{v}(x_n,t_{l+1})\}\le0,$$
which is uniform with respect to $(x_n,t_{l+1})\in\T^2$.
By an estimate similar to (\ref{375-3}),  we obtain
$$\liminf_{\Delta\to0}\{\bar{v}_\Delta^{(c)}(x_n,t_{l+1})-\bar{v}(x_n,t_{l+1})\}\ge0,$$
which is uniform with respect to $(x_n,t_{l+1})\in\T^2$.
Therefore, we conclude that $\bar{v}_\Delta^{(c)}\to\bar{v}$ uniformly on $\T^2$ and $\bar{v}$ is $\Z^2$-periodic due to the periodicity of $\bar{v}_\Delta$.

Through reasoning similar to the proof of Theorem~2.8 in~\cite{Soga2}, it follows that $\bu_\Delta^{(c)}:=(\bar{v}^{(c)}_\Delta{})_x$ converges to $\bu=\bar{v}_x$ pointwise almost everywhere in $\T^2$, where $\{\bar{v}_\Delta^{(c)}\}$ is the convergent subsequence above.
Hence, we have $\norm \bar{u}_\Delta^{(c)}(\cdot,t)-\bu(\cdot,t)\norm_{L^1(\T)}\to0$ for each $t$.
Through reasoning similar to the proof of Proposition~2.14 in~\cite{Nishida-Soga}, it follows that $\bu_\Delta^{(c)}$ satisfies
$$\norm \bu^{(c)}_\Delta(\cdot,t_k)-\bu^{(c)}_\Delta(\cdot,t_{k'})\norm_{L^1(\T)}\le b_2|t_k-t_{k'}|$$
with a constant $b_2$ independent of $k$, $k'$, $c$, and $\Delta$.
Therefore, $\bu\in Lip(\T;L^1(\T))$ with the Lipschitz constant $b_2$.
Thus, we have demonstrated the theorem.
\end{proof}
%%%%%%%%%%%%%%%%%%
\setcounter{section}{3}
\setcounter{equation}{0}
\section{Error Estimates}
We show error estimates for entropy solutions of initial value problems  and for  $\Z^2$-periodic entropy solutions in the special case where they are associated with KAM tori. The latter is a rigorous result on finite difference approximation of KAM tori. We refer to \cite{Bessi} for an error estimate for $\Z^2$-periodic entropy solutions associated with KAM tori in the vanishing viscosity method.
%%%%%%%%%%%%%%%%%%%%%%%
\subsection{Error Estimates for Initial Value Problem}
The following theorem provides error estimates for the initial value problem.
\begin{Thm}\label{Main-Error} Let $\Delta=(\dx,\dt)$ satisfy the conditions in Theorem~\ref{SVA} and Proposition~\ref{entropy-condition}.
Let $u$ be the entropy solution of (\ref{CL}) and $u_\Delta$ be given by the difference solution of (\ref{2CL-Delta}).
Then, the following hold:
\begin{enumerate}
\item For any $T\in(0,\infty)$, for each $t\in(0,T]$, and independent of the initial data, there exists a constant $\beta_4(t)>0$  for which
$$\norm u_\Delta(\cdot,t)-u(\cdot,t)\norm_{L^1(\T)}\le \beta_4(t)\dx^{\frac{1}{4}}.$$
In particular, if $u^0$ is rarefaction-free, then there exists a constant $\beta_5>0$ for which
$$\sup_{0\le t\le T}\norm u_\Delta(\cdot,t)-u(\cdot,t)\norm_{L^1(\T)}\le \beta_5\dx^{\frac{1}{4}}.$$
\item If $u$ is Lipschitz in $\T\times[0,T]$, then there exists a constant $\beta_6>0$ for which
$$\sup_{(x,t)\in\T\times[0,T]}| u_\Delta(x,t)-u(x,t)|\le \beta_6\dx^{\frac{1}{4}}.$$
\end{enumerate}
\end{Thm}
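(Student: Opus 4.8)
Here is how I would attack Theorem~\ref{Main-Error}. The plan is to derive both estimates from two ingredients that are already available: the $O(\sqrt{\dx})$ error for the value functions, $\sup_{t\in[0,T]}\|v_\Delta(\cdot,t)-v(\cdot,t)\|_{C^0(\T)}\le\beta_2\sqrt{\dx}$ (Claim~5 of Theorem~\ref{SVA}, which is precisely where the random-walk/characteristic discrepancy enters), and the one-sided Lipschitz control on $u^k_m$ and on $u(\cdot,t)$ provided by Proposition~\ref{entropy-condition}. The bridge between them is an elementary interpolation inequality: $C^0$-closeness of two semiconcave functions forces $L^1$-closeness of their a.e.\ derivatives with a square-root loss, and $C^0$-closeness of the derivatives if in addition one of the two functions is $C^{1,1}$.

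First I would isolate that inequality. Let $F,G\in Lip(\T)$ with $\|F-G\|_{C^0(\T)}\le\varepsilon$, both semiconcave with common constant $E$, i.e.\ $F(x+h)+F(x-h)-2F(x)\le Eh^2$ and likewise for $G$, for all $x$ and all $h>0$. The one-sided difference-quotient bounds $F'(x)\le h^{-1}(F(x)-F(x-h))+\tfrac12 Eh$ and $G'(x)\ge h^{-1}(G(x+h)-G(x))-\tfrac12 Eh$, after replacing $F$ by $G$ up to an error $2\varepsilon$, give $F'(x)-G'(x)\le h^{-1}\psi_h(x)+2\varepsilon h^{-1}+Eh$, where $\psi_h(x):=2G(x)-G(x+h)-G(x-h)$. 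Since $\psi_h\ge-Eh^2$ pointwise (semiconcavity of $G$) and $\int_\T\psi_h\,dx=0$ (periodicity), one has $\int_\T(\psi_h)_+\,dx\le Eh^2$; integrating the displayed inequality over $\T$ and choosing $h\sim\sqrt{\varepsilon/E}$ yields $\int_\T(F'-G')_+\,dx\le C\sqrt{E\varepsilon}$, and the symmetric argument gives $\|F'-G'\|_{L^1(\T)}\le C\sqrt{E\varepsilon}$. If moreover $F\in C^{1,1}$ with $\|F''\|_{L^\infty}\le M_0$, then $|2F(x)-F(x+h)-F(x-h)|\le M_0h^2$ pointwise, and combining the same one-sided bounds — using semiconvexity of $F$ for the reverse direction — upgrades this to the pointwise estimate $\|F'-G'\|_{C^0(\T)}\le C\sqrt{(E+M_0)\varepsilon}$.

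I would then apply this with $F=v(\cdot,t)$, $G=v_\Delta(\cdot,t)$, $\varepsilon=\beta_2\sqrt{\dx}$. The semiconcavity constant of $v_\Delta(\cdot,t)$ is $E^{k(t)}_\Delta\le 2e^{\eta t}/(H^\ast_{pp}t)=:C(t)$ by Claim~1 of Proposition~\ref{entropy-condition}, and $v(\cdot,t)$ is semiconcave with the same constant (this is the $\Delta\to0$ limit of the discrete semiconcavity inequality, using Claim~5 of Theorem~\ref{SVA}; cf.\ the discussion preceding Proposition~\ref{entropy-condition}); this produces $\|u_\Delta(\cdot,t)-u(\cdot,t)\|_{L^1(\T)}\le\beta_4(t)\dx^{1/4}$. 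When $u^0$ is rarefaction-free, the envelope argument in the proof of Proposition~\ref{entropy-condition} gives $E^k_\Delta\le\max\{E^0_\Delta,E^\ast\}$ for all $k$ with $E^0_\Delta$ bounded by the one-sided Lipschitz constant of $u^0$ independently of $\Delta$ (cell-averaging preserves one-sided Lipschitz bounds), and $v(\cdot,t)$ is uniformly semiconcave, so the constant becomes $t$-independent and we get the uniform bound $\beta_5\dx^{1/4}$. For part~2, $u$ Lipschitz on $\T\times[0,T]$ forces $u^0$ Lipschitz, hence rarefaction-free — so $E^k_\Delta$ is uniformly bounded — and makes $v(\cdot,t)\in C^{1,1}$ in $x$ with $\|v_{xx}\|_{L^\infty}\le\mathrm{Lip}(u)$, so the $C^{1,1}$-version of the lemma yields $\|u_\Delta(\cdot,t)-u(\cdot,t)\|_{C^0(\T)}\le\beta_6\dx^{1/4}$. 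Since $v_\Delta(\cdot,t)$ is only piecewise linear, I would run everything on the grid: telescoping the one-sided Lipschitz inequalities for $u^k_m$ gives, for every $p\ge1$, $u^k_m\le(2p\dx)^{-1}(v^k_{m+1}-v^k_{m+1-2p})+\tfrac12 E^k_\Delta(2p\dx)$ together with the symmetric lower bound, with no error terms; moving off the grid costs only $O(\dx)$ (using joint Lipschitz continuity of $u$ in part~2), which is lower order since the optimal scale is $h\sim\dx^{1/4}\gg\dx$.

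The main obstacle, I expect, is the $L^1$ form of the lemma: the remainder $h^{-1}\psi_h$ is not sign-definite, so it cannot be bounded pointwise, and it is the interplay of semiconcavity ($\psi_h\ge-Eh^2$) with the zero-mean property on $\T$ ($\int_\T\psi_h=0$) that controls its positive part — this is the step that genuinely produces the exponent $\tfrac14$ rather than something worse. A secondary nuisance is tracking the $t$-dependence of the semiconcavity constants carefully enough to separate the blow-up of $\beta_4(t)$ as $t\downarrow0$ from the uniform constants $\beta_5$ and $\beta_6$, which forces one to invoke the rarefaction-free case of Proposition~\ref{entropy-condition} and its $\Delta\to0$ limit. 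No new compactness or uniqueness input is required.
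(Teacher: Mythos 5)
Your proposal is correct, and for Claim~1 it takes a genuinely different route from the paper. Your key tool is an interpolation lemma for periodic semiconcave functions: bound $F'-G'$ by one-sided difference quotients at scale $h$, control the non-sign-definite remainder $h^{-1}\psi_h$ through the combination $\psi_h\ge -Eh^2$ and $\int_\T\psi_h\,dx=0$, and optimize $h\sim\sqrt{\varepsilon/E}$; applied to $F=v(\cdot,t)$, $G=v_\Delta(\cdot,t)$ with $\varepsilon=\beta_2\sqrt{\dx}$ this gives the $L^1$ bound, and the $C^{1,1}$/semiconvex variant gives the $C^0$ bound of Claim~2. The paper instead never uses semiconcavity of the genuine solution $v(\cdot,t)$: it introduces an auxiliary finer mesh $\tilde{\Delta}$ with $\tilde{\dx}\le(\beta_2^{-1}\varepsilon)^4$ and $\dx/\tilde{\dx}=3^p$ (so the discontinuity points are nested), reduces to estimating $\norm u_{\tilde{\Delta}}(\cdot,t)-u_\Delta(\cdot,t)\norm_{L^1(\T)}$, and decomposes $\T$ into the maximal intervals of constant sign of this difference, treating intervals shorter than $\dx^{1/4}$ by the total-variation bound $2E^{k(t)}_\Delta$ (one-sided Lipschitz condition plus zero mean) and the at most $\dx^{-1/4}$ longer intervals by the $C^0$ bound $O(\sqrt{\dx})$ on the primitives. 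Both proofs thus rest on exactly the same two inputs (Claim~5 of Theorem~\ref{SVA} and Proposition~\ref{entropy-condition}) and both obtain $\dx^{1/4}$ by balancing $\sqrt{\dx}$ against the Oleinik-type constant; what differs is that you must supply continuum semiconcavity of $v(\cdot,t)$ with an explicit constant, which you legitimately get by passing the discrete one-sided estimate to the limit via the uniform convergence of Claim~5 (or by citing the standard semiconcavity theory), whereas the paper's two-mesh trick avoids this at the cost of the nested-mesh construction and the sign-interval decomposition. For Claim~2 your argument is essentially the paper's in different clothing: the paper's contradiction argument on an interval around a putative large deviation of $w_\Delta$, driven by the $O(\sqrt{\dx})$ integral bound and the one-sided Lipschitz condition, encodes the same two-sided difference-quotient estimate at scale $\dx^{1/4}$. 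Your supporting points check out: the telescoped discrete inequality $u^k_m\le(2p\dx)^{-1}(v^k_{m+1}-v^k_{m+1-2p})+\tfrac12 E^k_\Delta(2p\dx)$ is exact and adequately replaces continuum semiconcavity of the piecewise linear $v_\Delta$; cell averaging preserves the one-sided Lipschitz bound of $u^0$, so $E^k_\Delta\le\max\{M,E^\ast\}$ in the rarefaction-free case, matching the paper; and the $O(\dx)$ off-grid corrections are indeed lower order than the working scale $h\sim\dx^{1/4}$.
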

%%%%%%%%%%%%%%%%%%%%%%%
\begin{proof}[{\bf Proof.}]
 1. Let $v_\Delta$ and $v$ correspond to $u_\Delta$ and $u$, respectively.
 By Theorem~\ref{SVA}, for all $t\in[0,T]$ and all initial data, we have
 \begin{eqnarray}\label{414141}
 \norm v_\Delta(\cdot,t)-v(\cdot,t)\norm_{C^0}\le \beta_2\sqrt{\dx}.
 \end{eqnarray}
 By  Proposition~\ref{entropy-condition}, for each $t\in[\dt,T]$ and all initial data, we have
 \begin{eqnarray}\label{3one-sided}
 \frac{u_\Delta(x_{m+2},t)-u_\Delta(x_m,t)}{2\dx}\le E^{k(t)}_\Delta.
 \end{eqnarray}
 Since $u_\Delta(\cdot,t)$ has zero mean, we have
 \begin{eqnarray*}
 \sum_{m;k(t)}\{u_\Delta(x_{m+2},t)-u_\Delta(x_m,t)\}
 &=&\sum_{m:+}\{u_\Delta(x_{m+2},t)-u_\Delta(x_m,t)\}\\
 &&+\sum_{m:-}\{u_\Delta(x_{m+2},t)-u_\Delta(x_m,t)\}\\
 &=&0,
 \end{eqnarray*}
where $\sum_{m:+}$ (resp. $\sum_{m:-}$) stands for the summation with respect to $m$ for which $u_\Delta(x_{m+2},t)-u_\Delta(x_m,t)\ge0$ (resp. $u_\Delta(x_{m+2},t)-u_\Delta(x_m,t)<0$).
Hence, it follows from (\ref{3one-sided}) that the total variation of $u_\Delta(\cdot,t)$ on $\T$ is bounded:
\begin{eqnarray}\label{3total-variation}
\sum_{m;k(t)}|u_\Delta(x_{m+2},t)-u_\Delta(x_m,t)|=2\sum_{m:+}\{u_\Delta(x_{m+2},t)-u_\Delta(x_m,t)\}\le 2E^{k(t)}_\Delta.
\end{eqnarray}
\indent For any $\varepsilon>0$, there exists $\tilde{\Delta}=(\tilde{\dx},\tilde{\dt})$ such that
$$\norm u_{\tilde{\Delta}}(\cdot,t)-u(\cdot,t)\norm_{L^1(\T)}\le \varepsilon.$$
In particular, we take such a $\tilde{\Delta}=(\tilde{\dx},\tilde{\dt})$ that satisfies $\tilde{\dt}/\tilde{\dx}=\dt/\dx$, $\tilde{\dx}\le (\beta^{-1}_2\varepsilon)^4$, and $\dx/\tilde{\dx}=3^p$ for some $p\in\N$.
The last relation guarantees that the points of discontinuity of $u_\Delta$ are also those of $u_{\tilde{\Delta}}$.  Then, we have
\begin{eqnarray}\nonumber
\norm u_\Delta(\cdot,t)-u(\cdot,t)\norm_{L^1(\T)}&\le& \norm u_{\tilde{\Delta}}(\cdot,t)-u_{\Delta}(\cdot,t)\norm_{L^1(\T)}+\varepsilon,\\\label{41-1}
\norm v_{\tilde{\Delta}}(\cdot,t)-v_\Delta(\cdot,t)\norm_{C^0}&\le& \beta_2\sqrt{\dx} +\beta_2\sqrt{\tilde{\dx}}\le 2\beta_{2}\sqrt{\dx}.
\end{eqnarray}
Now we estimate $\norm u_{\tilde{\Delta}}(\cdot,t)-u_{\Delta}(\cdot,t)\norm_{L^1(\T)}$.
We introduce $w_\Delta:=u_{\tilde{\Delta}}(\cdot,t)-u_{\Delta}(\cdot,t)$, $\tilde{w}_\Delta:=v_{\tilde{\Delta}}(\cdot,t)-v_{\Delta}(\cdot,t)$ and $\tilde{k}(t):=3^pk(t)$.
Let $x_m\in\tilde{\dx}\Z$ and set $x_{m_0}:=0$ for $\tilde{k}(t)$ even or $x_{m_0}:=\tilde{\dx}$ for $\tilde{k}(t)$ odd.
We divide $\tilde{\dx}\Z$ according to the sign of $w_\Delta$.
That is, $I_1,I_2,\ldots,I_{n+1}$ are defined as
 \begin{eqnarray*}
 I_1&:=&\{x_{m_0},x_{m_{0}+2},\cdots,x_{m_{1}} \}\mbox{ on which $w_\Delta(x)\ge0$ (or $<0$)},\\
 I_2&:=&\{x_{m_{1}+2},x_{m_{1}+4},\cdots,x_{m_{2}} \}\mbox{ on which $w_\Delta(x)<0$ (or $\ge0$)},\\
I_3&:=&\{x_{m_2+2},x_{m_{2}+4},\cdots,x_{m_{3}} \}\mbox{ on which $w_\Delta(x)\ge0$ (or $<0$)},\\
 && \cdots,\\
  I_n&:=&\{x_{m_{n-1}+2},x_{m_{n-1}+4},\cdots,x_{m_{n}} \}\mbox{ on which $w_\Delta(x)<0$ (or $\ge0$)},\\
 I_{n+1}&:=&\{x_{m_{n}+2},x_{m_{n}+4},\cdots,x_{m_{0}}+1 \} \mbox{ on which $w_\Delta(x)\ge0$ (or $<0$)},
 \end{eqnarray*}
where $n$ is even and $x_{m_n}\le x_{m_0}+1-2\tilde{\dx}$.
We then redefine $I_1$ as
 $$I_1:=\{x_{m_0},x_{m_{0}+2},\cdots,x_{m_{1}} \}\mbox{ with $x_{m_0}:= x_{m_n+2}-1$}.$$
Note that $w_\Delta(x)\ge 0$ (or $<0$) on $I_1$.
Setting $|I_1|:=x_{m_1}-x_{m_0}+2\tilde{\dx}$ and $|I_j|:=x_{m_j}-x_{m_{j-1}+2}+2\dx$ for $j>1$,
we have $\sum_{j=1}^n|I_j|=1$.
For each $I_j$ on which $w_\Delta(x)\ge 0$ (resp. $<0$), we have a $y^j\in I_j$ for which $w_\Delta(x)$ takes the maximum (resp. minimum) within $I_j$.
Suppose that $w_\Delta(x)\ge 0$ on $I_1$.
In the other case, the argument is  parallel.
Note that
 $$\norm w_\Delta(x)\norm_{L^1(\T)}=\sum_{j=1}^{n/2}\left\{\sum_{x\in I_{2j-1}}w_\Delta(x)\cdot2\tilde{\dx}-\sum_{x\in I_{2j}}w_\Delta(x)\cdot2\tilde{\dx}\right\}.$$
Introducing $J:=\{j\,|\,0\le j\le n/2,\,\,\,\max\{  |I_{2j-1}|,|I_{2j}| \}<\dx^{1/4} \}$ and $\tilde{J}:=\{j\,|\,0\le j\le n/2,\,\,\,\max\{  |I_{2j-1}|,|I_{2j}| \}\ge\dx^{1/4} \}$, we have $\sharp \tilde J\cdot \dx^{1/4}\le 1$ and $\sharp \tilde{J}\le \dx^{-1/4}$.
Therefore, noting (\ref{3total-variation}) and (\ref{41-1}) as well as $w_\Delta=(\tilde{w}_\Delta)_x$, we obtain
\begin{eqnarray*}
\norm w_\Delta(x)\norm_{L^1(\T)}&= &\sum_{j\in J} \left\{\sum_{x\in I_{2j-1}}w(x)\cdot2\tilde{\dx}-\sum_{x\in I_{2j}}w_\Delta(x)\cdot2\tilde{\dx}     \right\}\\
&&+\sum_{j\in\tilde{J}}\left\{\sum_{x\in I_{2j-1}}w_\Delta(x)\cdot2\tilde{\dx}-\sum_{x\in I_{2j}}w_\Delta(x)\cdot2\tilde{\dx}     \right\}\\
&\le& \sum_{j\in J}|w_\Delta(y^{2j-1})-w_\Delta(y^{2j})|\dx^{\frac{1}{4}}\\
&&+\sum_{j\in \tilde{J}}\Big[\{ \tilde{w}_\Delta(x_{m_{2j-1}}+\tilde{\dx})-\tilde{w}_\Delta(x_{m_{2j-2}+2}-\tilde{\dx})\}\\
&&-\{ \tilde{w}_\Delta(x_{m_{2j}}+\tilde{\dx})-\tilde{w}_\Delta(x_{m_{2j-1}+2}-\tilde{\dx})\}  \Big]\\
&\le& (2E^{\tilde{k}(t)}_{\tilde{\Delta}}+2E^{k(t)}_\Delta)\dx^{\frac{1}{4}}+\sharp\tilde{J}\cdot4\cdot2\beta_2\sqrt{\dx}\\
&\le& 4E^{k(t)}_\Delta\dx^{\frac{1}{4}}+8\beta_2\dx^{\frac{1}{4}},
\end{eqnarray*}
Since $\varepsilon$ is arbitrary, we conclude that
$$\norm u_{\Delta}(\cdot,t)-u(\cdot,t)\norm_{L^1(\T)}\le (4E^{k(t)}_\Delta+8\beta_2)\dx^{\frac{1}{4}}.$$
If $u^0$ is rarefaction-free, we have $M>0$ such that $E^{k(t)}_\Delta\le\max\{M,E^\ast\}$ for all $0\le t\le T$.

2. Fix $t\in[0,T]$ arbitrarily.
By (\ref{414141}), for any $x,x'\in\T$ we have
\begin{eqnarray*}
|\int^x_{x'} u_\Delta(y,t)-u(y,t)dy|\le 2 \beta_2\sqrt{\dx}.
\end{eqnarray*}
From (\ref{3one-sided}) it follows that $\norm\tilde{u}_\Delta-u_\Delta(\cdot,t)\norm_{L^1(\T)}\le b_1\dx$,  where $\tilde{u}_\Delta(x)$ denotes the linear interpolation of $u^{k(t)}_m$ with respect to the space variable.
Hence, setting $w_\Delta:=\tilde{u}_\Delta-u(\cdot,t)$, for all $x,x'\in\T$ we have
\begin{eqnarray}\label{4aaa}
|\int^x_{x'}w_\Delta(y)dy|\le b_2\sqrt{\dx}.
\end{eqnarray}
Since $u$ is Lipschitz, $w_\Delta$ still satisfies the one-sided Lipschitz condition
$$\frac{w_\Delta(x_1)-w_\Delta(x_2)}{x_1-x_2}\le b_3.$$
Note that $w_\Delta$ does not necessarily satisfy any Lipschitz condition, because $\tilde{u}_\Delta$ does not necessarily satisfy any Lipschitz condition.
Suppose that $|w_\Delta(\bar{x})|> b_{4}\dx^{\frac{1}{4}}$ with $(b_{4})^2/(4b_2)>b_3$ for some $\bar{x}$. Let $I\ni\bar{x}$ be a connected interval on whose boundary we have $|w_\Delta(x)|=\frac{b_{4}}{2}\dx^{\frac{1}{4}}$.
By (\ref{4aaa}), we find that
$$|I|\le\frac{2b_2}{b_{4}}\dx^{\frac{1}{4}}.$$
If $w_\Delta(\bar{x})>0$ (resp. $<0$), and with the left (resp. right) boundary of $I$ denoted by $x$, we have
$$\frac{w_\Delta(\bar{x})-w_\Delta(x)}{\bar{x}-x}\ge \frac{(b_{4})^2}{4b_2}>b_3\quad\left( \mbox{resp. } \frac{w_\Delta(x)-w_\Delta(\bar{x})}{x-\bar{x}}\ge \frac{(b_{4})^2}{4b_2}>b_3 \right),$$
which is a contradiction.
Therefore, we obtain
$$\norm w_\Delta\norm_{C^0}\le  b_{4}\dx^{\frac{1}{4}}.$$
Since $|u_\Delta(x,t)-u(x,t)|=|u^{k(t)}_m-u(x,t)|\le|u^{k(t)}_m-u(x_m,t)|+b_{5}\dx=|w_\Delta(x_m)|+b_{5}\dx$, we have demonstrated the theorem.
\end{proof}
%%%%%%%%%%%%
\subsection{Error Estimate for KAM Tori}
Let $\bu^{(c)}=\bar{v}_x^{(c)}$ be a $\Z^2$-periodic entropy solution of the $C^1$-class.
We remark the relationship between such a $\bu^{(c)}$ and Hamiltonian dynamics.
Consider the time-1 map $f:\T\times\R\to\T\times\R$ of the Hamiltonian flow generated by the flux function $H(x,t,p)$ with the initial time equal to zero.
Then, $\{(x,c+\bu^{(c)}(x,0))\,|\,x\in\T\}\cong\T$ is a smooth invariant torus of $f$.
According to the classical result of Poincar\'e,  there exists a rotation number $\omega_1$.
Let us regard the nonautonomous Hamiltonian dynamics generated by $H(x,t,p)$ as the autonomous dynamics generated by $\mathcal{H}(q_1,q_2,p_1,p_2):=p_2+H(q_1,q_2,p_1)$ in the extended phase space $\T^2\times\R^2$.
We define
$$\I(\bu^{(c)}):=\{(q,g(q))\,|\,q=(q_1,q_2)\in\T^2\}\cong\T^2,$$
where $g(q):=(c+\bu^{(c)}(q_1,q_2),\bar{h}(c )-H(q_1,q_2,c+\bu^{(c)}(q_1,q_2))$.
Then, $\I(\bu^{(c)})$ is a smooth invariant torus of the Hamiltonian flow $\varphi^s_\mathcal{H}$ generated by $\mathcal{H}$.
Let  $C(s):=(\gamma^\ast(s),s)$ be the characteristic curves of $\bu^{(c)}$, which satisfy $\gamma^\ast{}'(s)=H_p(\gamma^\ast(s),s,c+\bu^{(c)}(\gamma^\ast(s),s))$ for $s\in\R$.
The dynamics of the reduced characteristic curves $C^\ast(s):=C(s)\mod1=(\gamma^\ast(s)\mod1,s\mod1)$ and that of the trajectories on $\I(\bu^{(c)})$ are identical; namely, for all $s\in\R$ we have
$$\varphi^s_\mathcal{H}(C^\ast(0),g(C^\ast(0)))=(C^\ast(s),g(C^\ast(s))).$$
According to the classical result of Poincar\'e, $C(s)/s$ converges to  $\omega=(\omega_1,1)\in\R^2$  independently of $C(0)$ as $|s|\to\infty$.
This $\omega$ is called a rotation vector of $\I(\bu^{(c)})$.
If the rotation vector is irrational, each trajectory starting from a point of $\I(\bu^{(c)})$ is dense on $\I(\bu^{(c)})$.
Therefore, we can obtain information on $\bu^{(c)}$ from merely one characteristic curve, which is the crucial fact in the subsequent argument.
Approximation of $\bu^{(c)}$ leads to that of the invariant torus $\I(\bu^{(c)})$.

Now we consider a special case where $\I(\bu^{(c)})$ is a KAM torus.
We say that $c$ is associated with a KAM torus if $\bu^{(c)}$ is $C^1$ and the dynamics of $C^\ast(s)$ is $C^1$-conjugate to that of a linear flow on $\T^2$ with a Diophantine rotation vector; namely, there exists a diffeomorphism $F:\T^2\to\T^2$ such that
$$C^\ast(s)=F(\omega s+\theta),$$
where $\theta\in\R$ depends on $C^\ast(0)$ and $\omega\in\R^2$ satisfies the $\nu,\tau$-Diophantine condition
$$|\omega_1 z_1+\omega_2z_2|\ge\nu \norm z\norm_1^{-\tau}\mbox{\quad for all $z\in\Z^2\setminus\{0\}$}.$$
If $c$ is associated with a KAM torus, then $\bu^{(c)}$ is the unique $\Z^2$-periodic entropy solution of (\ref{CL2}) with that $c$.
Regarding the existence of a value $c$ associated with a KAM torus, we refer to the classical KAM theory for the autonomous Hamiltonian systems with two degrees of freedom generated by the above $\mathcal{H}(q_1,q_2,p_1,p_2)$.
We remark that with additional assumptions the classical KAM theory under R\"ussmann's nondegenerate condition (e.g., see~\cite{Sevryuk}) works for such a degenerate $\mathcal{H}$ in $(p_1,p_2)$.
The following theorem provides error estimates for KAM tori.
%%%%%%%%%%%%
\begin{Thm}\label{error-KAM} Let $\Delta=(\dx,\dt)$ satisfy the conditions in Theorem~\ref{SVA}, Proposition~\ref{entropy-condition}, and Theorem~\ref{Main-stability}.
Suppose that $c$ is associated with a KAM torus.
Let $\bar{v}^{(c)}$ be a $\Z^2$-periodic viscosity solution such that $\bar{v}^{(c)}_x=\bu^{(c)}$.
Then, the space-time periodic difference solutions $\bar{v}^{(c)}_\Delta$ and $\bu^{(c)}_\Delta$ satisfy
\begin{eqnarray*}
\sup_{(x,t)\in\T^2}| \bar{v}_\Delta^{(c)}(x,t)-\bar{v}^{(c)}(x,t)|\le\beta_7\dx^{\frac{1}{2(1+\tau)}},\quad\sup_{(x,t)\in\T^2} |\bar{u}_\Delta^{(c)}(x,t)-\bar{u}^{(c)}(x,t)|\le\beta_8\dx^{\frac{1}{4(1+\tau)}},
\end{eqnarray*}
where $\beta_7$ and $\beta_8$ are independent of $\Delta$.
\end{Thm}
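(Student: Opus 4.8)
The plan is to establish a $C^0$-estimate for $\bar v^{(c)}_\Delta-\bar v^{(c)}$ first and then deduce the $\bar u$-estimate by the one-sided Lipschitz argument used at the end of Theorem~\ref{Main-Error}. Since $\bar v^{(c)}$ is determined only up to an additive constant, I normalize it so that $\min_{\T^2}(\bar v^{(c)}_\Delta-\bar v^{(c)})=0$; it then suffices to bound $\mathrm{osc}_{\T^2}(\bar v^{(c)}_\Delta-\bar v^{(c)})=\max_{\T^2}(\bar v^{(c)}_\Delta-\bar v^{(c)})$ by $\beta_7\dx^{\frac{1}{2(1+\tau)}}$. Extend $\bar v^{(c)}_\Delta$ periodically to all (including negative) grid times, so the stochastic and variational representations are available over arbitrarily long backward intervals, as in the proof of Theorem~\ref{effective}.

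The core is a ``one period at a time'' propagation estimate. Fix a grid point $(x,t)$ and let $\gamma^\ast$ be the backward characteristic of $\bar v^{(c)}$ issuing from $(x,t)$; since $\bu^{(c)}$ is $C^1$, this is a globally defined $C^2$ curve on the invariant torus $\I(\bu^{(c)})$ with uniformly bounded derivatives. Using $\xi^k_m:=\gamma^\ast{}'(t_k)$ as a (non-minimizing) competitor in the stochastic and variational representation of $\bar v^{(c)}_\Delta$ over the single period $[t-1,t]$, and the exact variational identity for $\bar v^{(c)}$ along $\gamma^\ast$ over the same period, subtraction and the $\Z^2$-periodicity of both functions give
$$(\bar v^{(c)}_\Delta-\bar v^{(c)})(x,t)\ \le\ (\bar v^{(c)}_\Delta-\bar v^{(c)})\big(\gamma^\ast(t-1)\bmod 1,\,t\big)+C_\ast\sqrt{\dx},$$
where $C_\ast$ is $\Delta$-independent and collects: the spread of the random walk about its mean $\eta$, which is $O(\sqrt{\dx})$ over one period by Proposition~\ref{eta}; the Riemann-sum and grid-rounding errors, $O(\dx)$ since $\gamma^\ast$ is uniformly $C^2$ and $\bar v^{(c)}_\Delta$ uniformly Lipschitz; and the Hamiltonian discrepancy $|\bar h_\Delta(c)-\bar h(c)|\le\beta_3\sqrt{\dx}$ from Theorem~\ref{effective}. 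The unit time-periodicity is precisely what re-centers the right-hand side on the same time slice and permits iteration: applying the inequality $M$ times along $\gamma^\ast$ (which simply continues backward on the torus) yields
$$(\bar v^{(c)}_\Delta-\bar v^{(c)})(x,t)\ \le\ (\bar v^{(c)}_\Delta-\bar v^{(c)})\big(\gamma^\ast(t-M)\bmod 1,\,t\big)+C_\ast M\sqrt{\dx}.$$

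Now I bring in the KAM hypothesis. The reduced characteristic $C^\ast(s)=F(\omega s+\theta)$ is $C^1$-conjugate to the linear flow with $\omega=(\omega_1,1)$, and since $F$ respects the time direction it restricts to a bi-Lipschitz diffeomorphism of each time-circle; hence $\{\gamma^\ast(t-j)\bmod 1\}_{1\le j\le M}$ is the $F$-image of a length-$M$ orbit segment of the circle rotation by $\omega_1$. From $|\omega_1 z_1+z_2|\ge\nu\|z\|_1^{-\tau}$ one gets $\|q\omega_1\|\gtrsim q^{-\tau}$, whence (three-distance theorem / continued fractions) that orbit segment is $\epsilon$-dense in $\T$ once $M\ge C'\epsilon^{-\tau}$, and therefore $\{\gamma^\ast(t-j)\bmod 1\}_{1\le j\le M}$ is $C''\epsilon$-dense in the time-$t$ circle. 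Take $(x,t)$ to be a global maximizer of $\bar v^{(c)}_\Delta-\bar v^{(c)}$ with value $a$; route its backward characteristic through a single fractional period (cost $O(\sqrt{\dx})$) down to the time slice of a global minimizer, and then pick $j^\ast\le M$ so that the $j^\ast$-th full-period iterate lands within $C''\epsilon$ of that minimizer. Using the uniform Lipschitz bound on $\bar v^{(c)}_\Delta-\bar v^{(c)}$ and the normalization, this gives $a\le C''\,\mathrm{Lip}\,\epsilon+C_\ast M\sqrt{\dx}$ with $M\sim\epsilon^{-\tau}$; optimizing $\epsilon\sim\dx^{\frac{1}{2(1+\tau)}}$ produces $\mathrm{osc}_{\T^2}(\bar v^{(c)}_\Delta-\bar v^{(c)})\le\beta_7\dx^{\frac{1}{2(1+\tau)}}$, the first estimate. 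For the second, set $\delta:=\beta_7\dx^{\frac{1}{2(1+\tau)}}$; then $\big|\int_{x'}^{x}(\bar u^{(c)}_\Delta-\bar u^{(c)})\,dy\big|\le 2\delta+O(\dx)$ for all $x,x'$, $\bar u^{(c)}$ is Lipschitz because $\bu^{(c)}$ is $C^1$, and the spatial interpolant of $\bar u^{(c)}_\Delta$ satisfies the one-sided Lipschitz bound with the $\Delta$-uniform constant of Proposition~\ref{entropy-condition} (its $E^k_\Delta$ being bounded by Claim~4 there); the interval argument from the proof of Theorem~\ref{Main-Error}(2) then forces $\|\bar u^{(c)}_\Delta-\bar u^{(c)}\|_{C^0}\le C\sqrt{\delta}=\beta_8\dx^{\frac{1}{4(1+\tau)}}$.

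The main obstacle is keeping the per-period error genuinely $O(\sqrt{\dx})$ with a constant uniform in both $\Delta$ and $M$ — this needs the uniform $C^2$-bounds on the KAM characteristics and, crucially, a careful check that periodicity really ``resets'' the random walk at each step, so that the accumulated error is $O(M\sqrt{\dx})$ rather than the $O(M^{3/2}\sqrt{\dx})$ one would incur from a single representation over all $M$ periods (the latter would spoil the exponent). The second delicate point is transporting the quantitative $\epsilon^{-\tau}$ ergodization rate for a Diophantine circle rotation through the $C^1$-conjugacy $F$, uniformly over the base point $\theta$.
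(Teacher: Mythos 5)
Your proposal is correct and follows essentially the same route as the paper: a variational comparison of $\bar v_\Delta$ and $\bar v$ along the exact backward characteristic with error $O(\sqrt{\dx})$ per unit period (absorbing the random-walk concentration from Proposition~\ref{eta}, grid rounding, and $|\bar h_\Delta(c)-\bar h(c)|\le\beta_3\sqrt{\dx}$), the $\varepsilon^{-\tau}$ ergodization rate of the Diophantine dynamics transported through $F$, the optimization $\varepsilon\sim\dx^{\frac{1}{2(1+\tau)}}$, and the identical one-sided-Lipschitz interval argument to pass from the $C^0$ bound on $\bar v_\Delta-\bar v$ to the $\dx^{\frac{1}{4(1+\tau)}}$ bound on $\bar u_\Delta-\bar u$. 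The only cosmetic differences are that the paper propagates from the touching point of the normalized difference and uses the two-dimensional equidistribution of the linear flow (Dumas, Bourgain--Golse--Wennberg) to reach every $x$, rather than steering the maximizer's characteristic onto the minimizer via the circle-rotation/three-distance argument, and its long-interval estimate ``similar to (\ref{375-2})'' with error $b_2\sqrt{\dx}\,j$ is exactly the period-by-period bookkeeping you spell out.
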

%%%%%%%%%%%
\begin{proof}[{\bf Proof.}] Let $\bar{v}^{(c)}_\Delta$ be a periodic difference solution.
In what follows, we omit the superscript $(c)$ in $\bar{v}^{(c)}$, $\bu^{(c)}$, etc.
Fix $t\in\T$ arbitrarily.
By adding a constant to $\bar{v}_\Delta$ if necessary, we have $\bar{v}_\Delta(x,t)-\bar{v}(x,t)\le0$ for all $x\in\T$ and $\bar{v}_\Delta(x^\ast,t)-\bar{v}(x^\ast,t)=0$ for some $x^\ast\in\T$.
Then, we have $n^\ast$ and $l$ such that
$$0=\bar{v}_\Delta(x^\ast,t)-\bar{v}(x^\ast,t)\le \bar{v}^l_{n^\ast+1}-\bar{v}(x_{n^\ast+1},t_l)+b_1\dx,$$
where $|x^\ast-x_{n^\ast+1}|\le2\dx$ and $t\in[t_l,t_{l+1})$.
For any $j\in\N$, we have a minimizing curve $\gamma^\ast$ such that
\begin{eqnarray*}
\dis \bar{v}(x_{n^\ast+1},t_l)=\int^{t_l}_{-j+t_l}L^{(c)}(\gamma^\ast(s),s,\gamma^\ast{}'(s))ds+\bar{v}(\gamma^\ast(-j+t_l),-j+t_l)+\bar{h}(c )j, \\
\bar{v}^l_{n^\ast+1}\le\int^{t_l}_{-j+t_l}L^{(c)}(\gamma^\ast(s),s,\gamma^\ast{}'(s))ds+\bar{v}_\Delta(\gamma^\ast(-j+t_l),-j+t_l)+\bar{h}_\Delta(c )j+b_2\sqrt{\dx}j,
\end{eqnarray*}
where we use an estimate similar to (\ref{375-2}).
Hence, with Claim~5 of Theorem~\ref{effective} we obtain
\begin{eqnarray*}
0&\le&\bar{v}^l_{n^\ast+1}-\bar{v}(x_{n^\ast+1},t_l)+b_1\dx\\
&\le& \bar{v}_\Delta(\gamma^\ast(-j+t_l),-j+t_l)-\bar{v}(\gamma^\ast(-j+t_l),-j+t_l)\\
&&+(\bar{h}_\Delta(c )-\bar{h}(c ))j+b_2\sqrt{\dx}j+b_1\dx\\
&\le&\bar{v}_\Delta(\gamma^\ast(-j+t),t)-\bar{v}(\gamma^\ast(-j+t),t)+b_3\sqrt{\dx}j.
\end{eqnarray*}
Since $\bar{v}_\Delta(x,t)-\bar{v}(x,t)\le0$ for all $x\in\T$, we then obtain
$$|\bar{v}_\Delta(\gamma^\ast(-j+t),t)-\bar{v}(\gamma^\ast(-j+t),t)|\le b_3\sqrt{\dx}j.$$
Since $C^\ast(-s+t):=(\gamma^\ast(-s+t),-s+t)\mod1$ is a reduced characteristic curve, we have $C^\ast(-s+t)=F(\omega (-s+t)+\theta)$.
From \cite{Dumas} and \cite{BGW} it follows that the set
$$\mathcal{N}_\varepsilon:=\{\theta+\omega(-s+t)\mod1\,|\,0\le s\le\frac{b_4}{\varepsilon^\tau}\}$$
is $\varepsilon$-dense on $\T^2$; namely,
$$\bigcup_{\zeta\in\mathcal{N}_\varepsilon}B_\varepsilon(\zeta)=\T^2,$$
where $B_\varepsilon(\zeta)=\{\tilde{\zeta}\in\T^2\,|\,\norm\tilde{\zeta}-\zeta\norm_1\le\varepsilon\}$.
We define $\mathcal{T}:=F^{-1}(\T\times\{t\})$ and $X:=(x,t)$ for $x\in\T$.
For each $X$, we have $\zeta\in\mathcal{N}_\varepsilon\cap\mathcal{T}$ such that $\norm\tilde{X}-\zeta\norm_1\le\varepsilon$ with $\tilde{X}:=F^{-1}(X)$ and such that $\zeta=\omega(-s^\ast+t)+\theta\mod1$ with some $0\le s^\ast\le\frac{b_4}{\varepsilon^\tau}$.
Note that $s^\ast$ must be an integer, because $F(\zeta)=C^\ast(-s^\ast+t)\in\T\times\{t\}$ and $-s^\ast+t\mod 1=t$.
Hence, setting $s^\ast=j$, we have
$$\norm X-C^\ast(-j+t)\norm_1=\norm F(\tilde{X})-F(\zeta)\norm_1\le\norm DF\norm_{op}\varepsilon.$$
Therefore, for all $x\in\T$ we obtain
\begin{eqnarray*}
|\bar{v}_\Delta(x,t)-\bar{v}(x,t)|&\le&|\bar{v}_\Delta(F(\tilde{X}))-\bar{v}_\Delta(F(\zeta))|+|\bar{v}_\Delta(F(\zeta))-\bar{v}(F(\zeta))|\\
&&+|\bar{v}(F(\zeta))-\bar{v}(F(\tilde{X}))|\\
&\le& b_5\varepsilon+b_3\sqrt{\dx}j+b_5\varepsilon\\
&\le& b_6(\frac{\sqrt{\dx}}{\varepsilon^\tau}+\varepsilon).
\end{eqnarray*}
Taking $\varepsilon=\dx^{\frac{1}{2(1+\tau)}}$, for all $x\in\T$ we have
$$|\bar{v}_\Delta(x,t)-\bar{v}(x,t)|\le 2b_6\dx^{\frac{1}{2(1+\tau)}}.$$
Note that $b_6$ is independent of the choice of $t$.
For $\bu_\Delta=(\bar{v}_\Delta)_x$, $\bar{u}=\bar{v}_x$, and all $x,x'\in\T$, we have
\begin{eqnarray*}
|\int^x_{x'}\bar{u}_\Delta(y,t)-\bar{u}(y,t)dy|\le 4b_6\dx^{\frac{1}{2(1+\tau)}}.
\end{eqnarray*}
Since  $\bu_\Delta$ satisfies the one-sided Lipschitz condition, we have $\norm\tilde{\bu}_\Delta-\bu_\Delta(\cdot,t)\norm_{L^1(\T)}\le b_7\dx$, where $\tilde{\bu}_\Delta(x)$ denotes the linear interpolation of $\bu^l_m$ with respect to the space variable.
Setting $w_\Delta:=\tilde{\bu}_\Delta-\bu(\cdot,t)$, for all $x,x'\in\T$ we have
\begin{eqnarray}\label{4aa}
|\int^x_{x'}w_\Delta(y)dy|\le b_8\dx^{\frac{1}{2(1+\tau)}}.
\end{eqnarray}
Since $\bu$ is $C^1$, we know that $w_\Delta$ still satisfies the one-sided Lipschitz condition
$$\frac{w_\Delta(x_1)-w_\Delta(x_2)}{x_1-x_2}\le b_9.$$
Suppose that $|w_\Delta(\bar{x})|>b_{10}\dx^{\frac{1}{4(1+\tau)}}$ with $(b_{10})^2/(4b_8)>b_9$ for some $\bar{x}$.
Let $I\ni\bar{x}$ be a connected interval on whose boundary we have $|w_\Delta(x)|=\frac{b_{10}}{2}\dx^{\frac{1}{4(1+\tau)}}$.
By (\ref{4aa}), we find that
$$|I|\le\frac{2b_8}{b_{10}}\dx^{\frac{1}{4(1+\tau)}}.$$
If $w_\Delta(\bar{x})>0$ (resp. $<0$), and with the left (resp. right) boundary of $I$ denoted by $x$, we have
$$\frac{w_\Delta(\bar{x})-w_\Delta(x)}{\bar{x}-x}\ge \frac{(b_{10})^2}{4b_8}>b_9\quad\left(\mbox{resp. } \frac{w_\Delta(x)-w_\Delta(\bar{x})}{x-\bar{x}}\ge \frac{(b_{10})^2}{4b_8}>b_9 \right),$$
which is a contradiction.
Therefore, we obtain
$$\norm w_\Delta\norm_{C^0}\le  b_{10}\dx^{\frac{1}{4(1+\tau)}}.$$
Since $|\bu_\Delta(x,t)-\bu(x,t)|=|\bu^l_m-\bu(x,t)|\le|\bu^l_m-\bu(x_m,t)|+b_{11}\dx=|w_\Delta(x_m)|+b_{11}\dx$, we have demonstrated the theorem.
\end{proof}
The point of our numerical approximation of KAM tori is that the embedding of each KAM torus is connected to a certain classical solution of the PDEs (\ref{CL2}) and (\ref{HJ2}), which are then solved numerically.
Note that the existence of such a classical solution is assumed in our argument.
The regularity criterion of solutions to (\ref{CL2}) and (\ref{HJ2}) under (A1)--(A4) remains an important open problem.
An estimate of the error between $\bar{u}^{(c)}_\Delta$ and $\bar{u}^{(c)}$ without the Diophantine condition or without the condition $\bar{u}^{(c)}\in C^1$ also remains open.
The latter is particularly interesting in the context of a rigorous treatment of numerical approximations of Aubry-Mather sets.

Finally, we describe in brief the idea of another numerical approach to KAM tori, which is based on the so-called a posteriori KAM theorem.
Let $f$ be the time-1 map given at the beginning of this subsection.
If there exists a smooth embedding $U^\ast:\T\to\T\times\R$ which satisfies the functional equation
\begin{eqnarray}\label{ap-KAM}
f\circ U(q)=U\circ T_{\omega_1}(q)\mbox{ \,\,\, for all $q\in\T$},
\end{eqnarray}
where $\omega_1\in\R$ and $T_{\omega_1}(q):=q+\omega_1$, then $U^\ast(\T)$ is a smooth invariant torus of $f$ on which the dynamics is $C^1$-conjugate to that of $T_{\omega_1}$ on $\T$.
The standard classical KAM theory leads to the fact that, if $\omega_1$ is a Diophantine number, a unique such $U^\ast$ exists under certain conditions on $f$.
The idea of the a posteriori KAM theorem is stated below.
We regard (\ref{ap-KAM}) as $\mathcal{F}(U):=f\circ U-U\circ T_{\omega_1}=0$ in a certain family $W$ of smooth mappings: $\T\to\T\times\R$, where $\mathcal{F}:W\to W$.
\medskip
\noindent{\bf Idea of a posteriori KAM Theorem.}
{\it Suppose that there exists $U^0\in W$ such that $\mathcal{F}(U^0)$ is close to $0$ in the norm of $W$.
Then, there exists unique $U^\ast$ such that $\mathcal{F}(U^\ast)=0$ and $\norm U^0-U^\ast\norm_W\le C\norm\mathcal{F}(U^0)\norm_W$.}
\medskip
\noindent In fact, this idea has been justified with the Diophantine condition of $\omega_1$ in many studies.
The a posteriori KAM theorem describes both the existence of KAM tori and their numerical approximation, since a suitable $U^0$ can be numerically constructed through Newton's method.
Moreover, the a posteriori KAM theorem can be successfully applied to find the magnitude of perturbation at which the classical KAM theory breaks down.
We point to \cite{de la Llave} for a nice presentation and survey with plenty of references for the a posteriori KAM theorem and its applications.
The a posteriori KAM theorem provides no information on the situation after the classical KAM theory breaks down.
On the other hand, the weak KAM theory still guarantees the existence of Aubry-Mather sets with arbitrary rotation numbers.
It will be an important contribution to recast the results of the classical KAM theory or the weak KAM theory in terms of the other, including a more detailed comparison of our result with those based on the a posteriori KAM theorem.
%%%%%%%%%

\end{document}